\newtheorem{theorem}{Theorem}%[section]
\newtheorem{lemma}[theorem]{Lemma}%[section]
\newtheorem{corollary}[theorem]{Corollary}%[section]
\newtheorem{proposition}[theorem]{Proposition}%[section]
\theoremstyle{remark}
\newtheorem{remark}{Remark}%[section]
\newtheorem{example}{Example}%[section]
\renewcommand{\Pr}{\mathbf{P}}
\newcommand{\E}{\mathbf{E}}
\newcommand{\Z}{\mathbb{Z}}
\newcommand{\N}{\mathbb{N}}
\newcommand{\I}{\textbf{I}}
\newcommand{\si}{\mathcal{\sigma}}
\title{Stochastic sequences with a regenerative structure that may depend both
  on the future and on the past}
\author{Sergey Foss$^{1}$ and Stan Zachary$^{1}$}
\date{\today}
\begin{document}

\maketitle
\stepcounter{footnote}\footnotetext{School of Mathematics and Computer
  Sciences and the Maxwell Institute for Mathematical Sciences,
  Heriot-Watt University, Edinburgh, EH14 4AS, Scotland, UK.  E-mail:
  s.foss@hw.ac.uk and s.zachary@gmail.com.  Research of both authors was
  partially supported by EPSRC grant EP/I017054/1.}

\begin{quotation}\small
  Many regenerative arguments in stochastic processes use random times
  which are akin to stopping times, but which are determined by the
  future as well as the past behaviour of the process of interest.
  Such arguments based on ``conditioning on the future'' are usually
  developed in an ad-hoc way in the context of the application under
  consideration, thereby obscuring underlying structure.  In this
  paper we give a simple, unified and more general treatment of
  such conditioning theory.  We further give a number of novel applications
  to various particle system models, in particular to various flavours
  of contact processes and to infinite-bin models.  We give a number
  of new results for existing and new models.  We further make
  connections with the theory of Harris ergodicity.
\end{quotation}

{\it Keywords:} regenerative process, break point, dependence on the future and on the past,
contact process, infinite-bin model, Harris ergodicity

{\it AMS classification:} 60K05, 60K35, 60K40, 60J05, 60G40, 60F99

\section{Introduction}
\label{sec:introduction}

Many arguments in stochastic processes use random times akin to
stopping times to establish regenerative or ergodic behaviour.  These
may be randomised stopping times as in the theory of Harris
ergodicity.  Alternatively they may be random times in which there is
an element of probabilistic conditioning on the, possibly infinite,
future of the
process of interest, but in which this conditioning is sufficiently
controlled that, with respect to these random times, the process
behaves as if they were stopping times; such times are used, for
example, in establishing the long-term behaviour of particle systems
and population processes conditional on their survival, and in
establishing the behaviour of processes conditioned to avoid given
regions of their state spaces.  More generally such random times,
defined by conditioning on future behaviour may also be used to
establish the unconditional behaviour of their parent processes---as
we illustrate in the applications of
Sections~\ref{sec:right-endp-supercr} and
\ref{sec:infinite-bin-model-1}.  Such arguments based on
``conditioning on the future'' are usually developed from scratch and
in an ad-hoc way in the context of the application under
consideration, thereby to some extent obscuring underlying structure.

Our aim in the present paper is to give a unified treatment of these
phenomena.  In doing so we develop a simple theory which is more
general than the sum of those already existing, and which has
applications---for example to some variants of the models considered
in Sections~\ref{sec:right-endp-supercr} and
\ref{sec:infinite-bin-model-1}---which cannot be managed by the simple
application of such bits of theory as already exist.  We further apply
the results obtained to a number of new models, including variants of
the three-state contact process of
Section~\ref{sec:right-endp-supercr} and of the infinite-bin models of
Section~\ref{sec:infinite-bin-model-1}.

For simplicity of exposition we work in this paper in discrete time.
In general the process of interest $\{X_n\}_{n\ge0}$, say, may be
\emph{constructed} as a functional of an underlying process
$\{\xi_n\}_{n\ge1}$, or $\{\xi_n\}_{n\in\Z}$, where as usual $\Z$ is
the set of integers.  In the present paper we assume that the process
$\{\xi_n\}$ consists of independent identically distributed random
variables~$\xi_n$.
However, some of the phenomena studied here for the
process $\{X_n\}$ continue to occur under more relaxed assumptions for
the underlying process $\{\xi_n\}$, for example, that it 
is regenerative in the sense that there are some random times at which
the process starts anew independently of the past.  These extensions
are typically straightforward; for example, in the case where the
process $\{\xi_n\}$ is regenerative we may restrict arguments to the
regeneration times. Extensions to continuous time are also
straightforward provided that the process $\{\xi_n\}$ is replaced by
something satisfying analogous homogeneity and independence
conditions; in the case of interacting particle systems (see
Section~\ref{sec:right-endp-supercr}) this is typically the collection
of underlying Poisson processes.

In Section~\ref{sec:cond-infin-future} we present our basic
theory. Our aim is to identify sequences of random times
$0\le\tau_0<\tau_1<\dots$, the definition of each of which may depend
both on the (typically infinite) past and on the (typically infinite)
future, but which are nevertheless such that the segments of the
process $\{\xi_n\}$ between successive such times are i.i.d..
Following Kuczek~\cite{Kuc}, we shall refer to these times (which are
an instance of regeneration times) as
\emph{break times}.

It is helpful to give an immediate example (in which the dependence is
on the future only).
\begin{example}\label{ex:1}
  Let $\{\xi_n\}_{n\ge1}$ consist of i.i.d.\ random variables, with
  common distribution given by
  \begin{align*}
    \Pr(\xi_i = 1) & = p\\
    \Pr(\xi_i = -1) & = q\\
    \Pr(\xi_i = 0) & = 1 - p - q,
  \end{align*}
  where $0<q<p$ and $p+q<1$.  We consider three variant constructions
  of random times whose definitions involve conditioning on the
  (infinite) future.
  \begin{compactenum}[(a)]
  \item For each $n\ge0$ let $F_n$ be the ``future'' event that
    $\sum_{i=1}^m \xi_{n+i}\ge0$ for all $m\ge1$.  Note that the
    common probability of the events $F_n$ is strictly positive.  Let
    $0\le\tau_0<\tau_1<\dots$ be the successive times $n$ at which the
    event $F_n$ occurs.  Then it is easy to see (and is a special case
    of the result of Example~\ref{ex:rw2} below) that the successive
    segments $\{\xi_{\tau_k+1},\dots,\xi_{\tau_{k+1}}\}$, $k\ge0$, of
    the process $\{\xi_n\}$ are independent and identically
    distributed in $k$.  In particular the successive time intervals
    $\tau_{k+1}-\tau_k$, $k\ge0$, are i.i.d.
  \item Now suppose that, for each $n\ge0$, we let $F'_n$ be the
    future event that $\sum_{i=1}^m \xi_{n+i}\ge0$ for all $m\ge1$ and
    additionally $\xi_{n+2}=0$.  Again the common probability of the
    events $F'_n$ is strictly positive.  Let
    $0\le\tau'_0<\tau'_1<\dots$ be the successive times $n$ at which
    the event $F'_n$ occurs.  In this case we do \emph{not} have
    independence of the successive segments
    $\{\xi_{\tau'_k+1},\dots,\xi_{\tau'_{k+1}}\}$, $k\ge0$; for
    example, if the events $F'_0$ and $F'_1$ both occur, then
    necessarily $\xi_{\tau'_1+1}=\xi_2=0$.
  \item Finally, suppose that the events $F'_n$ are as in (b)
    above. However, define the sequence $0\le\tau''_0<\tau''_1<\dots$
    by $\tau''_0=\min\{n\ge0: F'_n \text{ occurs}\}$ (i.e.\
    $\tau''_0=\tau'_0$) and, for $k\ge1$,
    $\tau''_k=\min\{n\ge\tau''_{k-1}+2: F'_n \text{ occurs}\}$.  Then
    it is again easy to see that the successive segments
    $\{\xi_{\tau''_k+1},\dots,\xi_{\tau''_{k+1}}\}$, $k\ge0$, of the
    process $\{\xi_n\}$ are once more independent and identically
    distributed in $k$.
  \end{compactenum}
\end{example}

The reasons for the different behaviours in the above example is that,
in order to obtain i.i.d.\ behaviour, we require the definitions of
the successive time $\tau_k$ to satisfy a form of monotonicity
condition in which, in a sense which we make clear in
Section~\ref{sec:cond-infin-future}, information about the future does
not cumulate; this condition is satisfied in the variants (a) and (c)
of Example~\ref{ex:1}, but not in the variant~(b).  In
Section~\ref{sec:cond-infin-future} we develop the relevant theory in
a general setting in which the break times $\tau_k$ may
depend on both the past and the future behaviour of the underlying
process $\{\xi_n\}$.  In particular we give conditions for the
segments of the process~$\{\xi_n\}$ between break times to
constitute i.i.d.\ cycles.  We believe this theory to be novel in the
general setting.  As a simple example, we apply the theory to a
general random walk with positive drift (generalising
Example~\ref{ex:1}).

In Section~\ref{sec:right-endp-supercr} we give applications of our
theory to a number of discrete-time contact process models, both
showing how existing results are more readily understood, and giving
some new results for a three-state contact process.  The theory is
equally applicable in the continuous-time setting, and has
applications in general to particle systems and similar models in
which processes ``survive'' with probabilities strictly between zero
and one.  In Section~\ref{sec:infinite-bin-model-1} we give
applications of the theory of Section~\ref{sec:cond-infin-future} to a
class of ``infinite-bin'' models.

In Section \ref{sec:HR} we make some connections with the
existing theory of Harris-ergodic Markov chains. Finally, in Section 
\ref{sec:discussion} we discuss a number of
other models and extended applications, including conditioning,
scaling and regeneration/asymptotic stationarity of the driving
sequence $\{\xi_n\}$.

\section{Conditioning on the future}
\label{sec:cond-infin-future}

We assume that the underlying process $\{\xi_n\}_{n\in\Z}$ (defined on
some underlying probability space $(\Omega,\,\mathcal{F},\,\Pr)$)
consists of i.i.d.\ random variables $\xi_n$.
For two events $A$ and $B$, we write $A=B$ if
their symmetric difference, $A\triangle B = A\setminus B \cup B\setminus A$,
has probability zero.

For $m\le n$ denote by ${\si}_{m,n}$ the $\sigma$-algebra generated
by $\xi_m,\dots,\xi_n$, and let ${\si}_n={\si}_{-\infty,n}$.  The
process $\{X_n\}_{n\in Z}$ (or $\{X_n\}_{n\in\Z_+}$) of interest will
typically be defined in terms of the process $\{\xi_n\}_{n\in\Z}$ and
adapted with respect to the filtration $\{{\si}_n\}_{n\in Z}$; for
example it may be defined by the stochastic recursion
\begin{equation}\label{SRS}
  X_{n+1}=f(X_{n},\xi_{n+1})
\end{equation}
for some function $f$ (and hence homogeneous Markov)

Define also $\sigma$ to be the sigma-algebra generated by all the
random variables $\xi_n$, $-\infty < n < \infty$.  As usual, we may
introduce a measure-preserving shift transformation $\theta$ on
$\sigma$-measurable random variables by assuming that $
\xi_n\circ\theta=\xi_{n+1}$, for all $n$, and that, more generally,
$g(\xi_m,\dots,\xi_n) \circ \theta = g(\xi_{m+1},\dots,\xi_{n+1})$,
for any measurable function $g$. (Here the finite sequence of random
variables $\{\xi_m,\dots,\xi_n\}$ may also be replaced a half-infinite
or an infinite one.)

We may further extend the shift transformation to events in $\sigma$
by defining (with a slight abuse of notation) $G_1 \circ \theta =G_2$
if $\I_{G_1}\circ \theta = \I_{G_2}$.  (Here by $\I_{G_1}$ we denote
the indicator function of the event $G_1$ which equals $1$ if the
event $G_1$ occurs and $0$ otherwise).  We then say that a sequence
of events $\{G_n\}$ is \emph{stationary} if it is so with respect to
$\theta$, i.e. $G_n\circ\theta=G_{n+1}$, for all $n$. Therefore, 
if there are two sequences and each of them is stationary, then also
they are jointly stationary.

In what follows, we consider,  
in addition to the process $\{\xi_n\}$, a given
sequence of events $\{F_n\}_{n\in\Z_+}$ which always satisfies the
following conditions:

\begin{compactenum}[(F1)]
\item {\it The sequence $\{F_n\}_{n\in\Z_+}$ is stationary, with the
    common value of $\Pr(F_n)$ strictly positive.}
\item {\it For each $n$, the event $F_n$ is defined in terms of the
    ``future'' process $\{\xi_m\}_{m>n}$, i.e.\
    $\I_{F_n}=g(\xi_{n+1},\xi_{n+2},\dots)$ for some function $g$
    (which by stationarity is independent of~$n$). Here the future
    dependence of each of the events $F_n$ may be on either the finite
    or the infinite future.}
\end{compactenum}

It is our intention to define, in terms of the process $\{\xi_n\}$ and
the sequence $\{F_n\}$, a sequence of random times
$0\le\tau_0<\tau_1<\dots$ on the nonnegative integers $\Z_+$.  Our
interest is in the behaviour of the successive segments of processes
$\{\xi_{\tau_k+1},\dots,\xi_{\tau_{k+1}}\}$, $k\ge0$.  (Note that the
definition of any such segment
$\{\xi_{\tau_k+1},\dots,\xi_{\tau_{k+1}}\}$ includes a specification
of its length $\tau_{k+1}-\tau_k$.)  It is convenient to take a
``point process'' approach, and to define first a further sequence of
events $\{A_n\}_{n\in\Z_+}$; the times $\tau_k$ are then defined to be
the successive times $n\ge0$ such that the event $A_n$ occurs.
(We observe that the need to unambiguously index the times $\tau_k$
obliges us to choose some origin~$0$ of time, and it is then
convenient to restrict attention to behaviour subsequent to time~$0$.
However, to the extent that the times of occurrence of the events
$A_n$ may be viewed as a point process on the positive integers, much
of what follows may be extended without difficulty to the entire set
$\Z$ of all the integers.)

\begin{remark}\label{rmk:mrf}
  Insofar as the times of occurrence of the events $A_n$ may be
  regarded as a point process on the integers, the determination of
  their locations, in the results below, is the result of
  simultaneous, and essentially Markovian, conditioning from both the
  past and the future.  There are therefore connections with the
  theory of one-dimensional Markov random fields.  However, for our
  present purposes, it is natural to define the conditions for these
  results directly in terms of events.  The distribution of the point
  process is then induced by the underlying i.i.d.\ driving sequence
  $\{\xi_n\}$.
\end{remark}

Our main result of this section is now the following theorem. For the
result to hold, we require some conditions which imply, in particular,
that each event $A_n$ may be represented as an intersection
$A_n=H_n\cap F_n$ of a ``past'' event $H_n\in\si_n$ and the ``future''
event $F_n$ defined earlier (see Remark~\ref{rmk3} below).

\begin{theorem}\label{th:1}
  Let a sequence of events $\{A_n\}_{n\in\Z_+}$ be given.  Let
  $\tau_0=\min \{n\ge 0: {\bf I}_{A_n}=1 \}$ and, for $k\ge 0$, let
  $\tau_{k+1} = \min \{n>\tau_k: {\bf I}_{A_n}=1 \}$.  Assume that
  $\tau_k<\infty$ a.s., for all $k$.

  Let also the following be given: a sequence of ``future'' events
  $\{F_n\}_{n\in\Z_+}$ satisfying the earlier conditions (F1) and (F2),
  sequences $\{H'_n\}_{n\in\Z_+}$ and $\{H''_n\}_{n\in\Z_+}$ of
  ``past'' events such
  that, for each $n$, we have $H'_n\in\si_n$ and $H''_n\in\si_n$, and,
  finally, an array of events $\{E_{n,n+m}\}_{n\ge0,m>0}$ with
  each $E_{n,n+m}\in\si_{n+1,n+m}$ and such that, for each fixed $m$,
  the sequence $\{E_{n,n+m}\}_{n\ge0}$ is stationary.  Suppose further
  that all these sequences are linked by the following relations: for
  $n\ge 0$,
  \begin{equation}
    \label{eq:35}
   \{\tau_0=n\} \equiv A_0^c \cap \dots \cap A_{n-1}^c \cap A_n = H'_n \cap F_n,
  \end{equation}
  and, for $0\le n' < n$, 
  \begin{equation}
    \label{eq:36}
    \{\exists \ k : \tau_k=n', \tau_{k+1}=n\} \equiv 
    A_{n'} \cap A_{n'+1}^c \cap \dots \cap A_{n-1}^c \cap A_{n}
    = H''_{n'} \cap E_{n',n} \cap F_{n}.
  \end{equation}
  Then the successive segments of processes
  $\{\xi_{\tau_k+1},\dots,\xi_{\tau_{k+1}}\}$, $k\ge0$, are
  independent and identically distributed.   
  In particular the random variables $\tau_{k+1}-\tau_k$, $k\ge0$, are
  i.i.d..  Further, for some constant $a>0$, and for all $k\ge0$,
  \begin{equation}
    \label{eq:37}
    \Pr(\tau_{k+1}-\tau_k = n) = a\Pr(E_{0,n}).
  \end{equation}
\end{theorem}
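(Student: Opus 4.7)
The strategy is to expand $\{\tau_0=n_0,\dots,\tau_{k+1}=n_{k+1}\}$ via (\ref{eq:35}) and (\ref{eq:36}), reduce it to an intersection whose constituents live in pairwise-disjoint $\sigma$-algebras of the i.i.d.\ sequence $\{\xi_n\}$, and then read off the factorization forced by independence and stationarity.

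The first step is three preparatory almost-sure implications. Unioning (\ref{eq:36}) over all $n>n'$ and using $\tau_{k+1}<\infty$ a.s.\ yields $A_{n'}=H''_{n'}\cap D_{n'}$ with $D_{n'}\in\si_{n'+1,\infty}$. Combining this with (\ref{eq:35}) and writing $A_n=\bigsqcup_k\{\tau_k=n\}$ shows $A_n\subseteq F_n$ a.s.\ for every $n$: the case $k=0$ is (\ref{eq:35}), and for $k\ge 1$ each summand $\{\tau_{k-1}=n',\tau_k=n\}$ lies in $F_n$ by (\ref{eq:36}). A past/future independence argument on the $\sigma$-algebras $\si_n$ and $\si_{n+1,\infty}$, using positivity of $\Pr(F_n)$ (from (F1)) and of $\Pr(H''_n)$ (nontriviality of the $\tau_k$'s), then upgrades this to
\[
E_{n,n+m}\cap F_{n+m}\subseteq F_n\quad\text{and}\quad H''_n\cap E_{n,n+m}\subseteq H''_{n+m}\quad\text{a.s.}
\]

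Substituting (\ref{eq:35})--(\ref{eq:36}) writes the target event as $H'_{n_0}\cap F_{n_0}\cap\bigcap_{i=0}^{k}(H''_{n_i}\cap E_{n_i,n_{i+1}}\cap F_{n_{i+1}})$, and the two implications above iteratively eliminate $F_{n_i}$ for $i\le k$ and $H''_{n_i}$ for $i\ge 1$, leaving
\[
H'_{n_0}\cap H''_{n_0}\cap\bigcap_{i=0}^{k}E_{n_i,n_{i+1}}\cap F_{n_{k+1}}.
\]
This decomposes across the independent $\sigma$-algebras $\si_{n_0},\si_{n_0+1,n_1},\dots,\si_{n_k+1,n_{k+1}},\si_{n_{k+1}+1,\infty}$. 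Independence together with stationarity of $\{F_n\}$ and of $\{E_{n,n+m}\}_n$ gives
\[
\Pr\bigl(\tau_0=n_0,\tau_{i+1}-\tau_i=m_{i+1}\text{ for }i\le k\bigr)=\Pr(H'_{n_0}\cap H''_{n_0})\,\Pr(F_0)\prod_{i=1}^{k+1}\Pr(E_{0,m_i}).
\]
The same argument with $E_{n_i,n_{i+1}}$ replaced by $E_{n_i,n_{i+1}}\cap\{(\xi_{n_i+1},\dots,\xi_{n_{i+1}})\in B_{i+1}\}$ (still $\si_{n_i+1,n_{i+1}}$-measurable) delivers the joint law of the full segment values.

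Summing over $n_0\ge 0$ absorbs the past factor into a single $k$-independent constant, and the remaining dependence on $(m_1,\dots,m_{k+1})$ is a pure product over $i$---exactly the form of an i.i.d.\ joint distribution, proving the first assertion. Identifying the one-dimensional marginal gives $\Pr(\tau_{k+1}-\tau_k=n)=a\Pr(E_{0,n})$ with $a$ fixed by the normalization $\sum_n a\Pr(E_{0,n})=1$; positivity of $a$ is immediate from $\tau_{k+1}<\infty$ a.s. The principal obstacle lies in the three a.s.\ containments of the preparatory step, because each must be situated in just the right $\sigma$-algebra so that past/future independence of the i.i.d.\ sequence produces the required implication; once those are in place, the remainder is routine factorization, stationarity, and summing.
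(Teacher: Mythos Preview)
Your argument is correct and follows essentially the paper's route: expand $\{\tau_0=n_0,\dots,\tau_{k+1}=n_{k+1}\}$ via (\ref{eq:35})--(\ref{eq:36}), drop the redundant intermediate $F_{n_i}$ using $A_n\subseteq F_n$, and factorize by independence of the blocks together with stationarity of $\{F_n\}$ and $\{E_{n,n+m}\}_n$.  The only real difference is that the paper stops at the representation
\[
H'_{n_0}\cap H''_{n_0}\cap E_{n_0,n_1}\cap H''_{n_1}\cap\cdots\cap H''_{n_k}\cap E_{n_k,n_{k+1}}\cap F_{n_{k+1}},
\]
lumping everything up to and including $H''_{n_k}$ into one $\si_{n_k}$-measurable prefix $\widetilde H_{n_k}$ (which already suffices for the conditional-distribution argument), whereas you go a step further and remove the intermediate $H''_{n_i}$ via the extra containment $H''_n\cap E_{n,n+m}\subseteq H''_{n+m}$, yielding a clean product formula.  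One small point: your other preparatory containment $E_{n,n+m}\cap F_{n+m}\subseteq F_n$ needs $\Pr(H''_n)>0$, which is not assumed for every $n$; it does hold for some $n$ (since $\tau_0<\infty$ forces $\Pr(A_n)>0$, hence $\Pr(H''_n)>0$, for that $n$) and then transfers to all $n$ by stationarity of the future events involved---or, more simply, you can bypass this containment entirely and eliminate the $F_{n_i}$ exactly as the paper does, via $H''_{n_i}\cap E_{n_i,n_{i+1}}\cap F_{n_{i+1}}\subseteq A_{n_i}\subseteq F_{n_i}$.
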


Note 
that it follows directly from
\eqref{eq:35} and \eqref{eq:36} that, for all $n$, we have
$A_n\subseteq F_n$, and from \eqref{eq:36} that, for all $n'$, we have
$A_{n'}\subseteq H''_{n'}$.  Hence we note
\begin{equation}
  \label{eq:38}
  A_n \subseteq H''_n \cap F_n, \qquad n\ge0.
\end{equation}

\begin{proof}[Proof of Theorem~\ref{th:1}]
  Fix $k\ge0$.  For any $0\le n_0<\dots<n_{k+1}$, it follows from
  \eqref{eq:35}, \eqref{eq:36} and the observation that $A_n\subseteq
  F_n$ for all $n$, that the following representation holds: 
  \begin{align}
    \{\tau_0=n_0,\dots,\tau_{k}=n_{k},\,\tau_{k+1}=n_{k+1}\}
    \hspace{-30ex} & \nonumber\\
    & = H'_{n_0} \cap H''_{n_0} \cap E_{n_0,n_1} \cap H''_{n_1} \cap
    \dots \cap H''_{n_k} \cap E_{n_k,n_{k+1}} \cap F_{n_{k+1}}
    \label{eq:HEF}
  \end{align}
  where the intersection of all but the last two events in \eqref{eq:HEF}, say 
  $\widetilde H_{n_k}$, belongs to the sigma-algebra $\si_{n_k}$ and is the
  same for all values of 
  $n_{k+1}$.  Thus for any events $G_{n_k}\in\si_{n_k}$,
  $G_{n_k,n_{k+1}}\in\si_{n_k+1,n_{k+1}}$,
  \begin{align}
    \Pr(\{\tau_0=n_0,\dots,\tau_{k}=n_{k},\,\tau_{k+1}=n_{k+1}\}%
    \cap G_{n_k}\cap G_{n_k,n_{k+1}})
    \hspace{-35ex} & \nonumber\\
    & = \Pr(G'_{n_k}\cap E_{n_k,n_{k+1}} \cap G_{n_k,n_{k+1}} \cap F_{n_{k+1}})
    \nonumber\\
    & = \Pr(G'_{n_k}) \Pr(E_{n_k,n_{k+1}} \cap G_{n_k,n_{k+1}}) \Pr(F_{n_{k+1}}).
    \label{eq:40}
  \end{align}
  where the event $G'_{n_k}=\widetilde H_{n_k}\cap G_{n_k}$ belongs to
  $\si_{n_k}$ and is the same for all values of $n_{k+1}$ and for all
  events $G_{n_k,n_{k+1}}$.  Thus, using also the stationarity of the
  sequence $\{F_n\}_{n\in\Z_+}$ and (for each $m$) of the sequence
  $\{E_{n,n+m}\}_{n\ge0}$, it follows that, conditional on
  $\{\tau_0=n_0,\dots,\tau_{k}=n_{k}\}$ and the process
  $\{\xi_n\}_{1\le n\le n_k}$, the distribution of
  $\{\xi_{\tau_k+1},\dots,\xi_{\tau_{k+1}}\}$ is that of
  $\{\xi_{\tau_0+1},\dots,\xi_{\tau_{1}}\}$ conditional on the
  occurrence of the event $\{\tau_0=n\}$, for any $n$ such that the
  latter probability is strictly positive.  All the assertions of the
  theorem now follow. In particular, to establish \eqref{eq:37}, we
  consider \eqref{eq:40} again with $G_{n_k}=G_{n_k,n_{k+1}}=\Omega$
  and with $n_{k+1}=n_k+n$ where $n$ is fixed.  Then we sum the
  expression \eqref{eq:40} all $0\le n_0<\ldots <n_k$ to obtain $\Pr
  (\tau_{k+1}-\tau_k=n)$.  This is of the form $a\Pr(E_{0,n})$ where,
  clearly, $a$ does not depend on $n$; therefore $a$ also does not
  depend on $k$ since the probabilities $a\Pr(E_{0,n})$ sum to
  one.
\end{proof}

\begin{remark}
  \label{rmk3}
  It is worth pausing to note, in somewhat intuitive terms, the
  significance of the conditions of Theorem~\ref{th:1}.  Note first
  that it follows straightforwardly from \eqref{eq:35} and
  \eqref{eq:36} that, for all $n$, we have 
  \begin{equation}\label{eq:AHF}
  A_n=H_n\cap F_n, \quad \mbox{with}
  \quad
  H_n = H'_n\cup\bigcup_{0\le n' <n} H_{n'}'' \cap E_{n',n} \in \si_n.
  \end{equation}  
  Suppose
  now that we proceed forwards in time with the process $\{\xi_n\}$.
  At each time $n$ such that the event $A_n$ occurs, we learn
  something about the future evolution of the process, namely that the
  event $F_n$ occurs.  In order to have some regeneration at this
  time, we need to ensure that, at each such time~$n$, given the knowledge
  that $F_n$ occurs, our knowledge at that time about the future
  probabilistic behaviour of the process is not \emph{further} conditioned by
  our knowledge of whether or not, for each earlier time $n'<n$, the
  event $A_{n'}$ occurs.
  This is essentially guaranteed by the condition~\eqref{eq:36}, which
  is in effect a form of monotonicity condition.
\end{remark}

We now give some special cases of Theorem~\ref{th:1}.
Corollary~\ref{cor:1} will be applied later to the two-state contact
process of Section~\ref{sec:two-state-contact} and to the basic
infinite-bin model of Section~\ref{sec:DIBModel}.

\begin{corollary}\label{cor:1}
  Suppose that, for all $n$, we have $A_n=F_n$ and that the sequence
  $\{F_n\}_{n\in\Z_+}$ satisfies the earlier conditions (F1) and (F2) and
  additionally the monotonicity condition
  \begin{equation}
    \label{eq:41}
    F_n \cap F_{n+m} = E'_{n,n+m} \cap F_{n+m}, \qquad n\ge 0,\ m>0,
  \end{equation}
  for some array of events $\{E'_{n,n+m}\}_{n\ge0,m>0}$ with each
  $E'_{n,n+m}\in\si_{n+1,n+m}$ and such that, for each fixed $m$, the
  sequence $\{E'_{n,n+m}\}_{n\ge0}$ is stationary.  Then the
  conclusions of Theorem~\ref{th:1} follow (with \eqref{eq:37} holding
  for appropriately defined $E_{0,n}$).
\end{corollary}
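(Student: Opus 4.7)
The plan is to reduce to Theorem~\ref{th:1} by constructing suitable past events $H'_n$, $H''_n$ and an array $\{E_{n',n}\}$ from the given data $\{F_n\}$ and $\{E'_{n,n+m}\}$, then verifying the two decompositions \eqref{eq:35} and \eqref{eq:36}.

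The central algebraic observation I would use is that, under the monotonicity condition~\eqref{eq:41}, for any $j<n$,
\[
F_j^c \cap F_n = F_n \setminus (F_j\cap F_n) = F_n \setminus (E'_{j,n}\cap F_n) = (E'_{j,n})^c \cap F_n.
\]
Applying this identity iteratively rewrites any product of the form $F_{n'}\cap F_{n'+1}^c\cap\dots\cap F_{n-1}^c\cap F_n$ purely in terms of the array $\{E'_{\cdot,n}\}$ and the single ``pivot'' event $F_n$.

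Concretely, I would set $H''_{n'}=\Omega$ for every $n'$ and define
\[
E_{n',n} \;=\; E'_{n',n} \cap \bigcap_{j=1}^{n-n'-1} (E'_{n'+j,n})^c,
\qquad
H'_n \;=\; \bigcap_{j=0}^{n-1}(E'_{j,n})^c.
\]
Since each $E'_{n'+j,n}\in\si_{n'+j+1,n}\subseteq\si_n$, the measurability requirements $E_{n',n}\in\si_{n'+1,n}$ and $H'_n\in\si_n$ hold. For fixed $m$, the sequence $\{E_{n',n'+m}\}_{n'\ge0}$ is a fixed finite Boolean combination of terms drawn from the stationary sequences $\{E'_{n,n+\ell}\}_{n\ge0}$ (for $\ell=1,\dots,m$), which are jointly stationary, so $\{E_{n',n'+m}\}_{n'\ge0}$ is itself stationary. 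Because $A_n=F_n$, the iterated identity above then gives exactly $\{\tau_0=n\}=H'_n\cap F_n$ and $\{\exists k:\tau_k=n',\tau_{k+1}=n\}=H''_{n'}\cap E_{n',n}\cap F_n$, which are \eqref{eq:35} and \eqref{eq:36}. Theorem~\ref{th:1} then delivers the i.i.d.\ property of the segments together with the formula \eqref{eq:37} for the chosen $E_{0,n}$.

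I do not anticipate a serious obstacle here; the argument is essentially a bookkeeping exercise. The only point to be careful about is that $H''_{n'}$ may be taken trivial (so that all ``past'' information is encoded in the event sequence $\{F_n\}^c$ through the $E'$'s), and that the complement operation $(E'_{j,n})^c$ preserves the required $\sigma_{n'+1,n}$-measurability of $E_{n',n}$ and the required $\si_n$-measurability of $H'_n$, together with the stationarity hypothesis for fixed $m=n-n'$.
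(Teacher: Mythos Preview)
Your proposal is correct and follows essentially the same route as the paper: both derive the complement identity $(E'_{j,n})^c\cap F_n = F_j^c\cap F_n$ from~\eqref{eq:41}, use it to rewrite $A_0^c\cap\dots\cap A_{n-1}^c\cap A_n$ and $A_{n'}\cap A_{n'+1}^c\cap\dots\cap A_{n-1}^c\cap A_n$ in the required form with $H''_{n'}=\Omega$, and then invoke Theorem~\ref{th:1}. Your write-up is in fact slightly more explicit than the paper's in spelling out the definitions of $H'_n$ and $E_{n',n}$ and in checking the stationarity of $\{E_{n',n'+m}\}_{n'\ge0}$ for fixed $m$.
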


\begin{proof}
  Note first that \eqref{eq:41} implies that also
  \begin{equation}
    \label{eq:42}
    F_n^c \cap F_{n+m} = (E'_{n,n+m})^c \cap F_{n+m}, \qquad n\ge 0,\ m>0,
  \end{equation}
    Since $A_n=F_n$ for all $n$, it follows from \eqref{eq:41} and
  \eqref{eq:42} that, for all $n\ge0$,
  \begin{displaymath}
    A_0^c \cap \dots \cap A_{n-1}^c \cap A_n
    =  (E'_{0,n})^c \cap \dots \cap (E'_{n-1,n})^c \cap F_n
  \end{displaymath}
  and, for $0\le n' < n$,
  \begin{displaymath}
    A_{n'} \cap A_{n'+1}^c \cap \dots \cap A_{n-1}^c \cap A_{n}
    = E'_{n',n} \cap (E'_{n'+1,n})^c \cap \dots \cap (E'_{n-1,n})^c \cap F_{n}.
  \end{displaymath}
  Thus the conditions of Theorem~\ref{th:1} are readily seen to be
  satisfied with $\{F_n\}_{n\in\Z_+}$ as here, and for appropriately
  defined $\{H'_n\}_{n\in\Z_+}$, $\{H''_n\}_{n\in\Z_+}$ (with
  $H''_n=\Omega$ for all $n$) and $\{E_{n,n+m}\}_{n\ge0,m>0}$.
\end{proof}

\begin{remark}
  We can now provide some explanation for the observations of
  Example~\ref{ex:1} in the Introduction.  In the variant~(a), the
  condition~\eqref{eq:41} of Corollary~\ref{cor:1} is readily seen to
  be satisfied, with $E'_{n,n+m}=\{\sum_{i=1}^{m'}\xi_{n+i}\ge0\text{
    for all $1\le m'\le m$}\}$.  In the variant~(b) of
  Example~\ref{ex:1} the monotonicity condition~\eqref{eq:41} of
  Corollary~\ref{cor:1} (with the sequence $\{F_n\}_{n\ge0}$ replaced
  by $\{F'_n\}_{n\ge0}$) clearly cannot be satisfied, and more
  generally the conditions of Theorem~\ref{th:1} cannot be satisfied
  (for otherwise that theorem would contradict the known behaviour for
  this example).  However, for the variant~(c) of Example~\ref{ex:1}
  we may observe that the condition~\eqref{eq:41} of
  Corollary~\ref{cor:1} (again with the sequence $\{F_n\}_{n\ge0}$
  replaced by $\{F'_n\}_{n\ge0}$) is satisfied whenever $m\ge2$, with
  $E'_{n,n+m}=\{\sum_{i=1}^{m'}\xi_{n+i}\ge0 \text{ for all $1\le
    m'\le m$ and $\xi_{n+2}=0$}\}$.  Since the enforced minimum
  separation $\tau_k-\tau_{k-1}\ge2$ for $k\ge1$ implies that $A_n\cap
  A_{n+1}=\emptyset$ for all $n$, this restricted version of the
  condition~\eqref{eq:41}, coupled with the proof of
  Corollary~\ref{cor:1}, is now sufficient to establish that the
  conditions of Theorem~\ref{th:1} are satisfied as before.
\end{remark}

We now give a generalisation of Corollary~\ref{cor:1} in which the
sequence $\{A_n\}_{n\in\Z_+}$ is defined by $A_n=H_n\cap F_n$ for the
sequence $\{F_n\}_{n\in\Z_+}$ as above and with $\{H_n\}_{n\in\Z_+}$
some sequence such that each $H_n\in\si_n$.  Here additionally a
monotonicity condition is required on the sequence
$\{H_n\}_{n\in\Z_+}$.  However the monotonicity condition on the
sequence $\{F_n\}_{n\in\Z_+}$ is only required to hold in relation to
those times~$n$ at which the event $H_n$ occurs.  The result, which
reduces to Corollary~\ref{cor:1} in the case where $H_n=\Omega$ for
all $n$, is entirely natural for many applications.  It will be
applied in Example~\ref{ex:rw2} below, to the three-state contact
process of Section~\ref{sec:three-state-contact}, and to the
continuous-space ``infinite-bin'' model of
Section~\ref{sec:cont-space-model}.

\begin{corollary}\label{cor:2}
  Suppose that, for all $n$, we have $A_n=H_n\cap F_n$, where the
  sequence $\{F_n\}_{n\in\Z_+}$ satisfies the earlier conditions (F1)
  and (F2) and the sequence $\{H_n\}_{n\in\Z_+}$ is such that
  $H_n\in\si_n$ for all $n$.  Suppose further that these sequences
  satisfy the following monotonicity conditions.
  \begin{compactenum}[(a)]
  \item For all $n\ge 0$, $m>0$,
    \begin{equation}
      \label{eq:1}
      A_n \cap A_{n+m} = H_n \cap E'_{n,n+m} \cap A_{n+m},
    \end{equation}
    where the array of events $\{E'_{n,n+m}\}_{n\ge0,m>0}$ is such
    that each $E'_{n,n+m}\in\si_{n+1,n+m}$ and, for each fixed $m$,
    the sequence $\{E'_{n,n+m}\}_{n\ge0}$ is stationary.
  \item For all $n\ge 0$, $m>0$,
    \begin{equation}
      \label{eq:43}
      A_n \cap H_{n+m} = A_n \cap E''_{n,n+m},
    \end{equation}
    where the array of events $\{E''_{n,n+m}\}_{n\ge0,m>0}$ is again
    such that each $E''_{n,n+m}\in\si_{n+1,n+m}$ and, for each fixed
    $m$, the sequence $\{E''_{n,n+m}\}_{n\ge0}$ is stationary.
  \end{compactenum}
  Then the conclusions of Theorem~\ref{th:1} follow (again with
  \eqref{eq:37} holding for appropriately defined $E_{0,n}$).
\end{corollary}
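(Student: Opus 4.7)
The plan is to deduce Corollary~\ref{cor:2} from Theorem~\ref{th:1} by exhibiting appropriate past and interval events. I take $H''_{n'}:=H_{n'}$ and
\[
E_{n',n}:=E'_{n',n}\cap E''_{n',n}\cap\bigcap_{j=1}^{n-n'-1}\bigl(E''_{n',n'+j}\cap E'_{n'+j,n}\bigr)^c,
\]
which lies in $\si_{n'+1,n}$ and, for each fixed $m=n-n'$, is stationary in $n'$ by the joint stationarity of $\{E'_{n,n+m}\}$ and $\{E''_{n,n+m}\}$. For condition~\eqref{eq:35} I iterate (a) with $(j,n)$, $j<n$: since $A_j\cap A_n=H_j\cap E'_{j,n}\cap A_n$, on $A_n$ we have $A_j\Leftrightarrow H_j\cap E'_{j,n}$, and so $A_0^c\cap\dots\cap A_{n-1}^c\cap A_n=H'_n\cap F_n$ with $H'_n:=H_n\cap\bigcap_{j=0}^{n-1}(H_j\cap E'_{j,n})^c\in\si_n$.

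For~\eqref{eq:36} set $B:=A_{n'}\cap A_{n'+1}^c\cap\dots\cap A_{n-1}^c\cap A_n$. Three applications of the monotonicity conditions suffice. First, (b) with $(n',n)$ yields $A_{n'}\cap A_n=A_{n'}\cap E''_{n',n}\cap F_n$. Second, (a) with $(n'+j,n)$ combined with (b) with $(n',n'+j)$ shows that on $A_{n'}\cap A_n$ one has $A_{n'+j}\Leftrightarrow E''_{n',n'+j}\cap E'_{n'+j,n}$; hence $B=A_{n'}\cap A_n\cap D$ with $D:=\bigcap_{j=1}^{n-n'-1}(E''_{n',n'+j}\cap E'_{n'+j,n})^c\in\si_{n'+1,n}$. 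Third, (a) with $(n',n)$ gives $F_{n'}\Leftrightarrow E'_{n',n}$ on $H_{n'}\cap H_n\cap F_n\supseteq B$, and substituting into $A_{n'}=H_{n'}\cap F_{n'}$ yields $B=H_{n'}\cap E'_{n',n}\cap E''_{n',n}\cap F_n\cap D=H''_{n'}\cap E_{n',n}\cap F_n$, as required.

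I expect the main obstacle to be justifying the reverse inclusion in this last set identity. On the candidate right-hand side $H_{n'}\cap E'_{n',n}\cap E''_{n',n}\cap F_n\cap D$, conditions (a), (b) couple the events $H_n$ (required for $A_n$) and $F_{n'}$ (required for $A_{n'}$): each is equivalent to the other given the remaining events, so either both $A_{n'}$ and $A_n$ hold (yielding $B$) or neither does. Excluding the joint-failure alternative is the delicate step; it may be handled by interpreting the equalities \eqref{eq:35}--\eqref{eq:36} modulo $\Pr$-null sets (consistent with the $\equiv$ convention of the paper) and arguing the discrepancy is null, or, more structurally, by exploiting the cascading nature of the assembled identity \eqref{eq:HEF}, in which the appearance of $F_{n_j}$ in each successive interval's decomposition contradicts the joint-failure case of an adjacent interval so that the conclusions of Theorem~\ref{th:1} still go through.
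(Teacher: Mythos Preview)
Your approach is essentially the paper's: both split $A_k^c = H_k^c \cup (H_k \cap F_k^c)$ and use conditions (a) and (b), together with their complementary forms, to replace each piece by an event in $\si_{n'+1,n}$. The paper phrases the outcome as a union over the $2^{n-n'-1}$ binary choices of $H_k^c$ versus $H_k\cap F_k^c$; that union collapses to exactly your explicit formula
\[
E_{n',n}=E'_{n',n}\cap E''_{n',n}\cap\bigcap_{j=1}^{n-n'-1}\bigl(E''_{n',n'+j}\cap E'_{n'+j,n}\bigr)^c,
\]
so there is no substantive difference in the construction of $E_{n',n}$ or $H'_n$.

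Where you go further than the paper is in flagging the reverse inclusion. The paper, after the reductions, reaches $A_{n'}\cap C\cap A_n$ with $C\in\si_{n'+1,n-1}$, applies \eqref{eq:1} to get $H_{n'}\cap E'_{n',n}\cap C\cap A_n$, and then simply \emph{asserts} that this is of the form $H_{n'}\cap E_{n',n}\cap F_n$, without explaining how the $H_n$ hidden inside $A_n=H_n\cap F_n$ is replaced by an event in $\si_{n'+1,n}$. The natural move---invoking \eqref{eq:43} at $(n',n)$---needs $A_{n'}$, which has just been traded away; this is precisely the circularity you identified. So the paper does not address the joint-failure case $H_{n'}\cap E'_{n',n}\cap E''_{n',n}\cap F_n\cap F_{n'}^{\,c}\cap H_n^{\,c}$ any more than you do.

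That said, your two proposed escapes are not yet arguments. The null-set claim is unsupported, and the cascading heuristic through \eqref{eq:HEF} inherits the same difficulty at the final link, since that chain terminates with $F_{n_{k+1}}$ rather than $A_{n_{k+1}}$, so one cannot bootstrap $H_{n_{k+1}}$ from \eqref{eq:43} without first having $A_{n_k}$, and so on backwards. In short: your derivation matches the paper's in content and in the level of rigour at this step; what you have correctly isolated is a point the paper leaves as an assertion.
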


\begin{proof}
  The proof is similar to that of Corollary~\ref{cor:1}, if a little
  messier.  It is necessary to verify the conditions~\eqref{eq:35} and
  \eqref{eq:36} of Theorem~\ref{th:1}, in which the sequence
  $\{F_n\}_{n\in\Z_+}$ of that theorem is as given here.  Note first
  that \eqref{eq:1} implies that also, for all $n\ge 0$, $m>0$,
  \begin{equation}
    \label{eq:2}
    H_n \cap F_n^c \cap A_{n+m} = H_n \cap (E'_{n,n+m})^c \cap A_{n+m},
  \end{equation}
  while \eqref{eq:43} similarly implies that also, for all $n\ge 0$,
  $m>0$,
  \begin{equation}
    \label{eq:3}
    A_n \cap H_{n+m}^c = A_n \cap (E''_{n,n+m})^c.
  \end{equation}
  Consider first the verification of the condition~\eqref{eq:36}.  For
  $0\le n'<n$, the event
  \begin{equation}\label{eq:5}
    A_{n'}\cap A_{n'+1}^c \cap\dots\cap A_{n-1}^c\cap A_n
  \end{equation}
  may be written as a union of events of the form
  \begin{equation}\label{eq:4}
    A_{n'}\cap B_{n'+1} \cap\dots\cap B_{n-1}\cap A_n
  \end{equation}
  where, for each $n'<k<n$, the event $B_k$ is either $H_k^c$ or
  $H_k\cap F_k^c$.  Now using equation~\eqref{eq:3} to simplify
  $A_{n'}\cap H_k^c$, equations~\eqref{eq:2} and then \eqref{eq:3} to simplify
  $H_k\cap F_k^c\cap A_n$, and finally equation~\eqref{eq:1} to
  simplify $A_{n'}\cap A_n$, it follows that each of the events given
  by \eqref{eq:4}, and so also the event given by \eqref{eq:5}, has a
  representation as $H_{n'}\cap E_{n',n}\cap F_n$, where, by
  construction, the array $\{E_{n',n}\}_{n'\ge0,n>n'}$ satisfies the
  conditions of Theorem~\ref{th:1}.

  The verification of the condition~\eqref{eq:35} of
  Theorem~\ref{th:1} (for some readily calculable sequence
  $\{H'_n\}_{n\in\Z_+}$ with each $H'_n\in\si_n$) is similar, but
  simpler.
\end{proof}

We now consider a process $\{X_n\}_{n\ge0}$ which, as indicated at the
beginning of this section, is adapted with respect to the filtration
$\{{\si}_n\}_{n\in \Z}$ (or $\{{\si}_n\}_{n\in \Z_+}$).  We further
assume that this process is defined via the specification of $X_0$ and
the stochastic recursion~\eqref{SRS}.  This is a standard situation
(but not the only one) in which the regenerative structure of the
successive blocks of the process $\{\xi_n\}$, as identified in
Theorem~\ref{th:1}, may be inherited by a (functional of the) process
$\{X_n\}$.  We introduce here a number of typical scenarios which will
be complemented by the examples of the following sections.

In what follows we wish to consider, for any time $n$, the dynamics of
the sequence $\{X_{n+i}\}_{i\ge0}$, and of functionals of this
sequence, relative to ``an initial'' $X_n$.  (One may think of a
growth model in which we are adding points at each time, or of a model
in which we center the system around the value of $X_n$ at time $n$.)
In order to do this we introduce functions $R_i(X_n,X_{n+i})$,
$i\ge1$, which capture this relative behaviour.  We assume further
that each such function $R_i$ acts as $R_i: {\cal X}^2 \to {\cal Y}$
where $({\cal X, B_X})$ is the space in which $X_n$ take values and
$({\cal Y, B_Y})$ is another measurable space.  In various of the
remaining examples of this paper, we indicate precisely the form of
the functions $R_i$.

Recall that a sequence, say $\{Y_n\}$, is \emph{stationary one-dependent}
if it is stationary and, for any $n$, the families of random variables
$\{Y_k,\, k<n\}$ and $\{Y_k,\, k>n\}$ are independent.  The following
result is a immediate extension of Theorem~\ref{th:1} and the
given conditions~\eqref{sce21}, \eqref{eq:re}, \eqref{eq:mm}.

\begin{theorem}\label{TH4}
  Suppose again that the the sequence $\{\xi_n\}_{n\in\Z}$ consists of
  i.i.d.\ random variables and that the random times
  $0\le\tau_0<\tau_1<\dots$ are defined as in Theorem~\ref{th:1},
  with all the conditions of that theorem holding.
  \begin{compactenum}[(a)]
  \item Suppose that the functions $R_i$, $i\ge1$, introduced above are
    such that, for any $n$, given that the event $A_n$ occurs, each
    random variable $R_i(X_{n+i}, X_n)$ is a measurable function of
    $\xi_{n+1},\dots,\xi_{n+i}$ only, i.e.\ for every $i\ge 1$,
    \begin{equation}\label{sce21}
      R_i(X_{n+i}, X_n)\I_{A_n}
      = g_i(\xi_{n+1},\ldots,\xi_{n+i})\I_{A_n}.
    \end{equation}
    Then the random elements
    \begin{equation}\label{eq:re}
      (R_i(X_{\tau_j+i},X_{\tau_j}),\ i=1,\dots,\tau_{j+1}-\tau_j)
    \end{equation}
    are i.i.d.\ in $j$.
  \item Suppose, more generally, that, for some fixed $m\ge 1$, the
    functions $R_i$, $i\ge1$, introduced above are now as follows: for
    any $n$, given that the event $A_n$ occurs, each random variable
    $R_i(X_{n+i}, X_n)$ is a measurable function of
    $\xi_{n-m+1},\dots,\xi_{n+i}$ only, i.e.\ for every $i\ge 1$,
    \begin{equation}\label{sce22}
      R_i(X_{n+i}, X_n)\I_{A_n}
      =g_i (\xi_{n-m+1},\ldots,\xi_{n+i})\I_{A_n}
    \end{equation}
    and that
    \begin{equation}\label{eq:mm}
      \tau_{n+1}-\tau_n \ge m \quad \text{a.s. for all $n$}.
    \end{equation}
    Then the random elements \eqref{eq:re} are stationary
    one-dependent.
  \end{compactenum}
\end{theorem}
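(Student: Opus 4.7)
The plan is to reduce both claims directly to the i.i.d.\ block structure established in Theorem~\ref{th:1}. Write $S_k := (\xi_{\tau_k+1},\ldots,\xi_{\tau_{k+1}})$ for the $k$-th block, so that $\{S_k\}_{k\ge0}$ is i.i.d.\ by Theorem~\ref{th:1}; note that the block length $\tau_{k+1}-\tau_k$ is itself an intrinsic part of $S_k$. The strategy is to express the tuple in~\eqref{eq:re} at index $j$ as a \emph{deterministic} function --- crucially, the same function for every $j$ --- of either $S_j$ alone (in part~(a)) or of the adjacent pair $(S_{j-1},S_j)$ (in part~(b)), and then read off the required distributional properties.

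For part~(a), since the event $A_{\tau_j}$ holds by the very definition of $\tau_j$, applying the hypothesis~\eqref{sce21} at $n=\tau_j$ yields $R_i(X_{\tau_j+i},X_{\tau_j})=g_i(\xi_{\tau_j+1},\ldots,\xi_{\tau_j+i})$ for all $1\le i\le\tau_{j+1}-\tau_j$. Thus the whole tuple in~\eqref{eq:re} is a fixed measurable functional of $S_j$ (the randomness in $j$ appearing only through which block we read), and the i.i.d.\ property in $j$ is then immediate from Theorem~\ref{th:1}.

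For part~(b), applying~\eqref{sce22} at $n=\tau_j$ rewrites the $i$-th entry as $g_i(\xi_{\tau_j-m+1},\ldots,\xi_{\tau_j+i})$; here the separation assumption~\eqref{eq:mm} is the key point, since $\tau_j-\tau_{j-1}\ge m$ forces the indices $\tau_j-m+1,\ldots,\tau_j$ to lie strictly above $\tau_{j-1}$, so the backward reach of length $m$ can cross at most one block boundary, and the whole tuple may therefore be written as $h(S_{j-1},S_j)$ for one fixed measurable $h$. Stationarity of the tuples then follows because $\{(S_{j-1},S_j)\}_{j\ge1}$ is a stationary $2$-block sequence built from i.i.d.\ blocks. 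For one-dependence I would fix $n$ and observe that tuples at indices $j<n$ are measurable functions of $\{S_k:k\le n-1\}$ while tuples at indices $j>n$ are measurable functions of $\{S_k:k\ge n\}$; as these two disjoint subfamilies of an i.i.d.\ sequence are independent, so are the corresponding families of tuples. The only real obstacle is the bookkeeping just performed --- verifying that~\eqref{eq:mm} is exactly what prevents the backward reach from spilling into $S_{j-2}$, which would immediately destroy one-dependence.
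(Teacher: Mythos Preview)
Your proposal is correct and follows exactly the route the paper intends: the paper does not give a detailed argument at all, merely stating that Theorem~\ref{TH4} ``is an immediate extension of Theorem~\ref{th:1} and the given conditions~\eqref{sce21}, \eqref{eq:re}, \eqref{eq:mm}.'' You have simply made that immediacy explicit---writing the $j$-th tuple as a fixed functional of $S_j$ in part~(a) and of $(S_{j-1},S_j)$ in part~(b), and then reading off the i.i.d.\ (respectively stationary one-dependent) structure from the i.i.d.\ blocks of Theorem~\ref{th:1}---which is precisely the intended mechanism.
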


Recall that a random sequence $\{Z_n\}$ is \emph{regenerative} if
there exist (random) times $0\le\tau_0<\tau_1<\dots$ such that the
random elements
$$
Y_0 := (\tau_0; Z_0,\ldots,Z_{\tau_0}), \
Y_1 := (\tau_1-\tau_0; Z_{\tau_0+1},\ldots, Z_{\tau_1}), \
Y_2 := (\tau_2-\tau_1; Z_{\tau_1+1},\ldots, Z_{\tau_2}), \ldots
$$
are mutually independent and the elements $\{Y_k\}_{k\ge 1}$ are
identically distributed.  The random sequence $\{Z_n\}$ is
\emph{wide-sense regenerative} if, for each  $n\ge 0$, the distribution
of the sequence $\{Z_{\tau_n+k}, k\ge 0\}$ does not depend on $\tau_n$.
Further, $\{Z_n\}$ \emph{possesses one-dependent regenerative cycles}
induced by $\{\tau_n\}$ if the sequence $\{Y_k\}_{k\ge 0}$ is one-dependent,  
and the random variables $\{Y_k\}_{k\ge 1}$  are identically distributed.
The latter two properties imply that the sequence $\{Y_k\}_{k\ge 1}$
is stationary 
and that the time instants $\{\tau_k\}$ form a renewal process.

% $\{Y_k\}_{k\ge 0}$ are one-dependent and
% $\{Y_k\}_{k\ge 1}$ are identically distributed.

\begin{corollary} \label{cor1}
  Suppose that the the sequence $\{\xi_n\}_{n\in\Z}$ consists of
  i.i.d.\ random variables and that the random times
  $0\le\tau_0<\tau_1<\dots$ are defined as in Theorem~\ref{th:1},
  with all the conditions
  of that theorem
  holding.  Suppose that the functions $R_i$, $i\ge1$, introduced
  above are such that either the conditions of part (a) or those of
  part (b) of Theorem~\ref{TH4} hold.  Finally suppose that the
  (common) distribution of the random variables $\tau_n-\tau_{n-1},
  n\ge 1$, is aperiodic, 
  i.e.\ $\mathrm{GCD}\{j:\Pr(\tau_1-\tau_0=j)>0 \}=1$.

  For any $n>\tau_0$, let
  \begin{equation}\label{eq:z}
    Z_n= R_{n-\tau_j} (X_n,X_{\tau_j})
    \quad \mbox{if}
    \quad \tau_j< n \le \tau_{j+1}.
  \end{equation}
  Then sequence $Z_n$ converges in the total variation norm to a
  proper limiting random variable.
\end{corollary}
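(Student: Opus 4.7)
The plan is to reduce the claim to the classical total-variation limit theorem for regenerative processes, using the cycle structure supplied by Theorem~\ref{TH4}.

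I would first identify the cycles. By \eqref{eq:z}, on each block $\tau_{j-1}<n\le\tau_j$ the value $Z_n$ equals $R_{n-\tau_{j-1}}(X_n,X_{\tau_{j-1}})$; by \eqref{sce21} in case~(a), respectively \eqref{sce22} in case~(b), this is a deterministic function, of the form fixed in $j$ by stationarity, of $\xi_{\tau_{j-1}+1},\dots,\xi_{\tau_j}$, respectively of $\xi_{\tau_{j-1}-m+1},\dots,\xi_{\tau_j}$, on the event $A_{\tau_{j-1}}$, which holds a.s. Combined with the i.i.d.\ property of the blocks $(\xi_{\tau_j+1},\dots,\xi_{\tau_{j+1}})$ from Theorem~\ref{th:1}, Theorem~\ref{TH4} then delivers the corresponding i.i.d.\ (case (a)) or stationary one-dependent (case (b)) structure on the cycles
\[
Y_j \;=\; \bigl(\tau_j-\tau_{j-1};\, Z_{\tau_{j-1}+1},\dots,Z_{\tau_j}\bigr), \qquad j\ge 1,
\]
so that $\{Z_n\}_{n>\tau_0}$ is regenerative, respectively possesses one-dependent regenerative cycles, with renewal epochs $\{\tau_k\}$ in the sense defined just before the corollary.

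I would then appeal to the classical TV-limit theorem for regenerative processes (Smith, see Asmussen or Thorisson): if the cycles are i.i.d.\ and the inter-renewal distribution is aperiodic with finite mean, then the distribution of $Z_n$ converges in total variation to the proper stationary law
\[
\Pr(Z_\infty \in B) \;=\; \frac{1}{\E(\tau_1-\tau_0)} \E\!\Bigl[\sum_{i=1}^{\tau_1-\tau_0}\I\{Z_{\tau_0+i}\in B\}\Bigr].
\]
Aperiodicity is exactly the \textrm{GCD} hypothesis in the statement, and the proof of Smith's theorem proceeds by a coupling argument between two independent copies of the process. In case (a) this gives the conclusion directly.

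The main obstacle is case (b), where the cycles are only one-dependent. I would handle this by pairing successive cycles into super-cycles $\tilde Y_j := (Y_{2j-1},Y_{2j})$: by one-dependence these super-cycles are i.i.d., and an elementary \textrm{GCD} argument on the self-convolution of the distribution of $\tau_1-\tau_0$ shows that the inter-renewal distribution $\tau_{2j}-\tau_{2(j-1)}$ of the super-cycles also has greatest common divisor~$1$, hence remains aperiodic. Smith's theorem applied to the super-cycle renewal structure then yields TV-convergence of $Z_n$ to a proper limit, and a straightforward computation identifies this limit with the expression displayed above. The finiteness of $\E(\tau_1-\tau_0)$, needed for the limit to be proper, is implicit in the statement and is verified on a case-by-case basis in the applications to follow.
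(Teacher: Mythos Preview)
Your treatment of case~(a) is fine and essentially what the paper does: identify i.i.d.\ cycles via Theorem~\ref{TH4}(a) and invoke the classical total-variation limit theorem for regenerative processes under aperiodicity.

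In case~(b), however, the super-cycle step is wrong. One-dependence of $\{Y_j\}$ (in the paper's sense: $\{Y_k:k<n\}$ and $\{Y_k:k>n\}$ are independent for every $n$) does \emph{not} imply that the paired blocks $\tilde Y_j=(Y_{2j-1},Y_{2j})$ are i.i.d. Concretely, in the present setting each $Y_j$ is a function of the $(j{-}1)$st and $j$th $\xi$-blocks from Theorem~\ref{th:1} (the backward reach of length~$m$ in \eqref{sce22} lands in the previous block because $\tau_j-\tau_{j-1}\ge m$). Hence $\tilde Y_1=(Y_1,Y_2)$ is a function of blocks $0,1,2$ and $\tilde Y_2=(Y_3,Y_4)$ is a function of blocks $2,3,4$; they share block~$2$ and are therefore dependent. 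No amount of consecutive pairing removes this overlap, so your reduction to Smith's theorem for i.i.d.\ cycles does not go through, and the subsequent GCD argument for the paired inter-renewal law is moot.

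The paper avoids this by not reducing to the i.i.d.\ case at all. It notes that, by the conditional-distribution statement established inside the proof of Theorem~\ref{th:1}, the post-$\tau_k$ process $\{Z_{\tau_k+i}\}_{i\ge0}$ has a law that does not depend on $\tau_k$ (nor on the past), i.e.\ $\{Z_n\}$ is \emph{wide-sense regenerative} with renewal epochs $\{\tau_k\}$; Theorem~\ref{TH4} then supplies the one-dependent cycle structure. The conclusion is obtained by citing the stability (total-variation convergence) theorem for wide-sense regenerative processes (Thorisson, \cite{Tho}, \S10; or Asmussen, \cite{Asm}), which requires only aperiodicity of the common inter-renewal law and covers the one-dependent case directly. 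If you want to keep a self-contained argument, the right fix is to run the standard coupling proof at the level of the wide-sense regenerative structure (two independent copies, couple at a simultaneous renewal), rather than trying to manufacture i.i.d.\ super-cycles.
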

Indeed, by Theorem \ref{th:1}, the sequence $Z_n$ is wide-sense
regenerative and, by Theorem \ref{TH4}, it possesses one-dependent
regenerative cycles, so that Corollary~\ref{cor1} follows from the
stability theorem for wide-sense regenerative processes, see e.g.\
Section 10 in \cite{Tho}, or \cite{Asm}.

\begin{example}\label{ex:rw2}
  \emph{Random walk with positive drift.}  We extend
  Example~\ref{ex:1} to consider a general random walk with positive
  drift.  Such a process provides possibly the simplest instance of
  the application of the above theory, and our aims here are both to
  demonstrate the use of Theorem~\ref{th:1} and to illustrate the use
  of conditioning simultaneously on both past and future events.  The
  results we give for this example are not new, but rather illustrate
  the immediate applicability of the present theory.  The sequence
  $\{\xi_n\}_{n\ge1}$ consists as usual of i.i.d.\ non-degenerate
  random variables $\xi_n$ with positive mean $a=\E\xi>0$. To make the
  example nontrivial, assume $\Pr(\xi<0)>0$.  The process of interest
  is the random walk $\{S_n\}_{n\ge0}$ with $S_0=0$ and
  $S_n=\sum_{i=1}^n\xi_i$ (we here use the notation $S_n$ instead of
  $X_n$).

  We define the earlier sequence $\{F_n\}_{n\in\Z_+}$, satisfying the
  conditions (F1) and (F2), by taking each $F_{n}$ to be the event that
  $\sum_{k=1}^m\xi_{n+k}>0$ for all $m\ge1$, i.e.\ that $S_m>S_n$ for
  all $m>n$.

  First, for a trivial application, we define the events $A_n$ of
  Theorem~\ref{th:1} by, for each $n$, $A_n=F_n$, so that the random
  times $\tau_k$, $k\ge0$, are simply the successive times of
  occurrence of the events $F_n$, and are simply the last exit times
  of the process $\{S_n\}$ above successive levels $k$.  As in the
  variant~(a) of the earlier Example~\ref{ex:1}, the
  condition~\eqref{eq:41} of Corollary~\ref{cor:1} is easily seen to
  be satisfied (with $E'_{n,n+m}=\{\sum_{k=1}^{m'}\xi_{n+k}\ge0\text{
    for all $1\le m'\le m$}\}$).  Thus the conclusions of
  Theorem~\ref{th:1} follow, and we have the well-known and elementary result
  that the segments of the process $\{S_n\}$ between the successive
  last exit times above are independent and identically distributed.

  We observe also that the functionals $R_i$ defined above are
  typically given by $R_i(S_{n+i},S_n)=S_{n+i}-S_n$.

  A more interesting application is given by defining the events $A_n$
  of Theorem~\ref{th:1} by, for each~$n$, $A_n=H_n\cap F_n$, where
  $H_n$ is the event that $S_{n'}<S_{n}$ for all $0\le n'<n$.  Thus
  the times $\tau_k$ are the successive times $n$ at which both
  $S_{n'}<S_n$ for all $n'<n$ and $S_{n'}>S_n$ for all $n'>n$.  The
  sequences of events $\{F_n\}_{n\in\Z_+}$ and $\{H_n\}_{n\in\Z_+}$
  satisfy the conditions~\eqref{eq:1} and \eqref{eq:43} of
  Corollary~\ref{cor:2}, with $E'_{n,n+m}$ as above and
  $E''_{n,n+m}=\{S_{n'}<S_{n+m}$ text{ for all $n\le n'<n+m$}\}.
  Hence again the conclusions of Theorem~\ref{th:1} follow, and we
  again have the result that the segments of the process $\{S_n\}$
  between the successive times $\tau_k$ are independent identically
  distributed.  A weak consequence is that these times themselves form
  a delayed renewal process.
  
  Here again the functionals $R_i$ of
  Section~\ref{sec:cond-infin-future} are typically given by
  $R_i(S_{n+i},S_n)=S_{n+i}-S_n$.
\end{example}

\section{The asymptotic behaviour of supercritical contact processes.}
\label{sec:right-endp-supercr}

The theory of Section~\ref{sec:cond-infin-future} has applications to
a variety of particle systems and oriented percolation models.  In
this section we consider some fairly general discrete-time contact, or
oriented percolation, processes on the integers~$\Z$.  These models
are normally studied in a continuous-time setting, and it is clear
that the relevant theory of Section~\ref{sec:cond-infin-future} could,
at the cost of some work, be adapted to that setting.

In Section~\ref{sec:two-state-contact} we study a modest
generalisation of the traditional two-state contact process on $\Z$,
in which, at each time $n$, each \emph{site} $a\in\Z$ is either
\emph{healthy} (state~$0$) or \emph{infected} (state~$1$).  We use our
earlier theory to study the behaviour of the \emph{right-endpoint
  process}, defined for each time~$n$ to be the rightmost infected
site at time~$n$.  The underlying ideas here are those of Kuczek
\cite{Kuc} and of Mountford and Sweet \cite{Mount}---the somewhat
greater generality of the model considered here makes little
difference.  However, we show that the results of Kuczek are an almost
immediate application of the theory of
Section~\ref{sec:cond-infin-future}, and become clearer when thus
understood.  (These ideas are further used by Mountford and Sweet, but
the main work of their paper is an additional ``block'' construction to
show that a certain event has a strictly positive probability---see
the discussion at the end of this section.)  This basic theory of
Section~\ref{sec:two-state-contact} is further a necessary preliminary
for applications to other particle system models.  We give one such in
Section~\ref{sec:three-state-contact} in which we study an extension
to a three-state process.  In this model, which has been considered by
a number of authors, healthy sites differ in their susceptibility to
subsequent infection according to whether they have previously been
infected.  Tzioufas \cite{Tzi} deduces right-endpoint behaviour for
the ``reverse-immunisation'' version of the process, in which
previously infected sites are easier to reinfect.  His argument uses a
monotonicity property which fails to hold in the ``immunisation''
version of the process, in which previously infected sites are more
difficult to reinfect.  We show how this difficulty is overcome by a
suitable definition of the sets $F_n$ of
Section~\ref{sec:cond-infin-future}.

\subsection{The two-state contact processes}
\label{sec:two-state-contact}

Consider a process in which sites, indexed by the integers $\Z$, are
at each time~$n$ either healthy or infected.  We define the state
$X_n$ of the process at time~$n$ to be the set of infected sites at
that time.  Between times~$n$ and $n+1$ each site $x\in X_n$ which is
infected at time $n$ produces a set $\eta_{n+1,x}\subseteq\Z$ of
\emph{descendants}, which is again a subset of $\Z$; at time $n+1$
these descendants infect a set of sites $x+\eta_{n+1,x}\subseteq\Z$,
where for any $x\in\Z$ and any $A\subseteq\Z$, we define $x+A=\{x+a:
a\in A\}$.  The state~$X_{n+1}$ of the process at time $n+1$ is given
by
\begin{equation}
  \label{eq:7}
  X_{n+1} = \bigcup_{x\in X_n} (x+\eta_{n+1,x}),
\end{equation}
i.e.\ is the union over $x\in X_n$ of the sites infected by the
descendants of these $x$.  Finally we assume that the random sets
$\eta_{n+1,x}$ are independent and identically distributed over both
times~$n$ and sites~$x$.  This model is a fairly general form of the
discrete-time version of the \emph{one-dimensional contact process},
or \emph{oriented percolation}.  Note that we do not make the
restriction (common for both discrete-time oriented percolation and
continuous-time contact processes) that, for each $n$ and $x$, the
random set $\eta_{n+1,x}$ is such that the events
$\{a\in\eta_{n+1,x}\}$ are independent over $a$.

Let $p$ be the probability that the process started with $X_0=\{0\}$,
say, \emph{survives}, i.e.\ $X_n\ne\emptyset$ for all $n\ge0$.
Suppose that $p>0$, so that the process is described as
\emph{supercritical}.  Our interest is then in the long-run behaviour
of this process.  In particular we are concerned with the behaviour of
the right-endpoint process $\{r_n\}_{n\ge0}$, conditional on survival,
where we define $r_n=\max(x:x\in X_n)$ (with $r_n=-\infty$ in the case
where $X_n$ is empty).  This, coupled with the behaviour of the
corresponding left-endpoint process, determines the growth rate of the
process.  We assume that $X_0$ is such that $r_0<\infty$ (and hence
$r_n<\infty$ for all $n$); usually $r_0=0$.

Consider first the case in which the process possesses the
following \emph{skip-free} property:  for all $n$ and all $x,y\in\Z$,
\begin{equation}
  \label{eq:13}
  x < y \quad \implies \quad x + a \le y + b
  \quad\text{for all $a\in\eta_{n+1,x}$,\ $b\in\eta_{n+1,y}$ \ a.s.}
\end{equation}
This is the discrete-time version of the \emph{nearest-neighbour}
property of the contact process on $\Z$.  The process
$\{r_n\}_{n\ge0}$ is studied by Galves and Presutti~\cite{GalPre}
and by Kuczek \cite{Kuc}.  The argument here is essentially a
rephrasing, in the framework of the present paper, of that of Kuczek,
and is given not only as an example and for completeness, but because
it is required for subsequent developments, in particular for the
theory of Section~\ref{sec:three-state-contact}, in which the present
arguments are extended and generalised.

We take the driving sequence $\{\xi_n\}_{n\ge1}$ (the index range
$n\ge1$ is sufficient), introduced in
Section~\ref{sec:cond-infin-future}, to be defined as follows.  For
each $n$, $\xi_n=\{\xi_{n,z}\}_{z\le0}$, where for each nonpositive
integer $z$, $\xi_{n,z}$ has the same distribution as any of the
random sets $\eta_{n,x}$ identified above.  In addition to being
identically distributed, the random sets $\xi_{n,z}$ are taken to be
independent over all $n$ and all $z$.  We now define the process
$\{X_n\}_{n\ge0}$ via a stochastic recursion
\begin{equation}
  \label{eq:9}
    X_{n+1}=f(X_{n},\xi_{n+1});
\end{equation}
for each $x\in X_n$ the random set $\eta_{n+1,x}$ of its descendants
at time $n+1$ is given by
\begin{equation}
  \label{eq:8}
  \eta_{n+1,x} = \xi_{n+1,x-r_n}.
\end{equation}
Thus in particular---and this is critical for the understanding of the
argument below, in which everything is in effect viewed from the right
endpoints of the processes of interest---the random set $\xi_{n+1,0}$
determines the set of descendants of the rightmost infected site $r_n$
at time~$n$, and for every other infected site at time~$n$ we count
its distance from $r_n$ in order to determine which of the random sets
$\xi_{n+1,z}$ to use for its set of descendants.  Different initial
sets $X_0$ of infected sites lead to different instances of the
process $\{X_n\}_{n\ge0}$.

For each $n\ge0$, consider the process $\{X^{(n)}_{n'}\}_{n'\ge n}$
defined by $X^{(n)}_{n}=\{0\}$ and
$X^{(n)}_{n'+1}=f(X^{(n)}_{n'},\xi_{n'+1})$ for $n'\ge n$.  Define also
the associated right-endpoint process $\{r^{(n)}_{n'}\}_{n'\ge n}$ by
$r^{(n)}_{n'}=\max(x:x\in X^{(n)}_{n'})$ (again with
$r^{(n)}_{n'}=-\infty$ in the case where $X^{(n)}_{n'}$ is empty).

We define the sequence of events $\{F_n\}_{n\in\Z_+}$ of
Section~\ref{sec:cond-infin-future} saying that the event~$F_n$ occurs
if and only if the process $\{X^{(n)}_{n'}\}_{n'\ge n}$ survives for
all future time.  It follows from the definition of the process
$\{X^{(n)}_{n'}\}_{n'\ge n}$ that the sequence $\{F_n\}_{n\in\Z_+}$
satisfies the conditions (F1) and (F2) of
Section~\ref{sec:cond-infin-future}.  In particular, the common value
of the probability of the events $F_n$ is $p$, which, by our earlier
assumption of supercriticality is strictly positive.

We define the events $A_n$ of Theorem~\ref{th:1} by, for each $n$,
$A_n=F_n$, so that the random times $\tau_k$, $k\ge0$, are simply the
successive times of occurrence of the events $F_n$.  It is our
intention to apply Corollary~\ref{cor:1}.
Fix therefore
$n\ge0$ and $m>0$ and suppose that the event $F_{m+n}$ occurs.  Then
if also the process $\{X^{(n)}_{n'}\}_{n'\ge{n}}$ survives to
time~$n+m$, i.e.\ $X^{(n)}_{n+m}\ne\emptyset$, it follows from
\eqref{eq:9} and \eqref{eq:8} that
\begin{equation}
\label{eq:14}
r^{(n)}_{n+m+n'}=r^{(n)}_{n+m}+r^{(n+m)}_{n+m+n'}
\qquad\text{for all $n'\ge0$.}
\end{equation}
We now have that, given that the event $F_{m+n}$ occurs, the event
$F_n$ occurs if and only if there occurs the event $E'_{n,n+m}$ that
the process $\{X^{(n)}_{n'}\}_{n'\ge n}$ survives to time~$n+m$.  We
thus have that the monotonicity condition~\eqref{eq:41} of
Corollary~\ref{cor:1} is satisfied (with $F_n$, $F_{n+m}$ and
$E'_{n,n+m}$ as defined here) and that, by construction, the array
$\{E'_{n,n+m}\}_{n\ge0,m>0}$ satisfies the conditions of that
corollary, so that the conclusions of Theorem~\ref{th:1} follow.
Since also the events $F_n$ have strictly positive probability $p$,
and since their indicator random variables $\I_{F_n}$ form a
stationary ergodic sequence, the first statement of
Proposition~\ref{prop:kuczek} below is now immediate from
Theorem~\ref{th:1} combined with the strong law of large numbers for
such sequences.

The proof of the second statement of Proposition~\ref{prop:kuczek} is
also essentially due to Kuczek, but, as we require essentially the
same argument (with a little extra complication) in
Section~\ref{sec:three-state-contact} below, we summarise it here.
Define the random time $\tau'=\{\min n\ge1:F_n\text{ occurs}\}$.  The
common distribution of the intervals $\tau_{k+1}-\tau_{k}$, $k\ge0$,
is that of the random time $\tau_1$ conditioned on the event
$\{\tau_0=0\}$, i.e.\ on the event $F_0$, which has strictly positive
probability.  The latter distribution is also that of the random
time~$\tau'$ conditioned on the event~$F_0$.  Hence, for the second
statement of Proposition~\ref{prop:kuczek}, it is sufficient to show
that the (unconditional) distribution of $\tau'$ is geometrically
bounded.  We show that this follows from the well-known property of
supercritical contact processes that if
$\rho=\min\{n\ge1:X^{(0)}_n=\emptyset\}$ then there exists $\alpha>0$
such that
\begin{equation}
  \label{eq:19}
  \Pr(n \le \rho < \infty) \le e^{-\alpha n}, \qquad n\ge1.
\end{equation}
To see that $\tau'$ is geometrically bounded, we may proceed forward
in time, starting at time $1$, checking at that and at selected
subsequent times $n$ whether the event $F_n$ occurs: if, at any such
time, it fails to do so we wait until the process
$\{X^{(n)}_{n'}\}_{n'\ge n}$ dies before resuming checking at
subsequent times, thereby ensuring that checks are independently
successful, each with probability $p>0$; the time to the occurrence of
a first success, and hence to the occurrence of \emph{some}
event~$F_n$, is thus a geometric sum of i.i.d.\ geometrically bounded
random variables, and is hence itself geometrically bounded, implying
the same result for $\tau'$.
We thus have the following proposition.

\begin{proposition} 
  \label{prop:kuczek}
  The successive (segments of) processes
  $\{\xi_{\tau_k+1},\dots,\xi_{\tau_{k+1}}\}$, $k\ge0$, are
  independent and identically distributed, each with finite mean
  length $1/p$.  Further the distribution of each of these lengths is
  light-tailed, i.e.\ geometrically bounded, and in particular
  possesses moments of all orders.
\end{proposition}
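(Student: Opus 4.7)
The plan is to verify the hypotheses of Corollary~\ref{cor:1} for the sequence $\{F_n\}_{n\in\Z_+}$ defined by survival of the process $\{X_{n'}^{(n)}\}_{n'\ge n}$, which will immediately yield the i.i.d.\ structure of the segments. Then I will read off the mean length from the stationarity of $\{\I_{F_n}\}$ together with the positivity of $p$, and finally obtain the geometric tail by a renewal-type argument based on the exponential bound~\eqref{eq:19}.

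First I would check the monotonicity condition \eqref{eq:41}. Fix $n\ge 0$ and $m>0$, and let $E'_{n,n+m}$ be the event that $X^{(n)}_{n+m}\ne\emptyset$ (i.e.\ survival of the process seeded at $\{0\}$ at time $n$ up to time $n+m$). By construction $E'_{n,n+m}\in\si_{n+1,n+m}$ and, by the i.i.d.\ nature of the driving sequence, the array $\{E'_{n,n+m}\}$ is stationary in $n$ for each fixed $m$. On $F_{n+m}$, the skip-free dynamics together with~\eqref{eq:14} imply that the process $\{X^{(n)}_{n'}\}_{n'\ge n}$ survives forever if and only if it first survives to time $n+m$, i.e.\ $F_n\cap F_{n+m} = E'_{n,n+m}\cap F_{n+m}$. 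This verifies the one remaining hypothesis of Corollary~\ref{cor:1}, so its conclusion and hence the i.i.d.\ cycle structure of Theorem~\ref{th:1} follow, establishing the first assertion (apart from the value of the mean).

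For the mean length, I would observe that $\{\I_{F_n}\}_{n\ge 0}$ is a stationary ergodic sequence (being a functional of the i.i.d.\ sequence $\{\xi_n\}$) with $\E\I_{F_n}=p$. By the pointwise ergodic theorem the density of occurrences of $F_n$ equals $p$ almost surely; since the successive gaps $\tau_{k+1}-\tau_k$ are i.i.d., this forces $\E(\tau_{k+1}-\tau_k)=1/p$. For the geometric tail I would proceed as sketched in the passage preceding the statement: define $\tau'=\min\{n\ge 1:F_n\text{ occurs}\}$; since the conditional distribution of $\tau_{k+1}-\tau_k$ given $F_0$ equals that of $\tau'$ given $F_0$, and $\Pr(F_0)=p>0$, it suffices to show $\tau'$ is geometrically bounded. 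I would construct $\tau'$ as a stopping time obtained by repeated independent trials: at each trial time $n$, test whether $F_n$ occurs; if it fails then the process $\{X^{(n)}_{n'}\}$ dies in some finite time $\rho_n$, whose distribution is geometrically bounded by~\eqref{eq:19}, and a fresh trial is begun at time $n+\rho_n$. The number of trials needed is geometric with success probability $p$, and each waiting interval is geometrically bounded, so $\tau'$ is a geometric sum of i.i.d.\ geometrically bounded random variables, hence itself geometrically bounded.

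The main subtlety, and the step I would be most careful about, is justifying the independence between successive trials in the tail argument: each failed trial conditions on the future of the driving sequence (through the non-survival of $\{X^{(n)}_{n'}\}$), and one must ensure this conditioning is confined to the random finite interval $[n,n+\rho_n]$ so that the sequence $\{\xi_{k}\}_{k>n+\rho_n}$ remains i.i.d.\ and independent of everything revealed so far. This is legitimate because $\rho_n$ is a stopping time relative to $\{\si_{n+1,n+j}\}_{j\ge 1}$, so the strong Markov property of the i.i.d.\ sequence yields the required renewal structure and the geometric bound goes through.
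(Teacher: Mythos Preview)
Your proposal is correct and follows essentially the same route as the paper: verification of the monotonicity condition~\eqref{eq:41} via the survival-to-time-$(n+m)$ events $E'_{n,n+m}$, the ergodic-theorem computation of the mean length $1/p$, and the renewal-type trial argument (waiting for extinction after each failed check) using~\eqref{eq:19} to bound $\tau'$ geometrically. Your explicit remark on why the successive trials are independent (via the stopping-time property of $\rho_n$) makes precise a point the paper leaves implicit, but otherwise the two arguments coincide.
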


Now let $\{X_n\}_{n\ge0}$ be any version of the contact process
defined by \eqref{eq:9} such that $X_0\ne\emptyset$ and, if
$\{r_n\}_{n\ge0}$ is its right-endpoint process, then $r_0<\infty$.
Let $F$ be the event that the process $\{X_n\}_{n\ge0}$ survives.
Then, as in the argument above used to establish~\eqref{eq:41}, the
event $F$ occurs if and only if the process survives to time $\tau_0$.
Note also that $F_0\subseteq F$; in the extreme case where there is a
single infected site at time~$0$ we have $F_0=F$, while in the case
the number of infected sites at time $0$ is infinite, we have
$\Pr(F)=1$.  Further, from the construction of the processes involved
and recalling \eqref{eq:14}, conditional on the event $F$ and for all
$k\ge0$,
\begin{equation}\label{eq:16}
  r_{\tau_k+n'} = r_{\tau_0} +
  \sum_{j=0}^{k-1}r^{(\tau_j)}_{\tau_{j+1}}
  + r^{(\tau_k)}_{\tau_k+n'} \qquad\text{for all $n'\ge0$.}
\end{equation}
We thus have immediately the following corollary to
Proposition~\ref{prop:kuczek}.

\begin{corollary}[Kuczek]
  \label{cor:Kuczek}
  On the set $F$ the successive (segments of) processes
  $\{r_{\tau_k+1},\dots,r_{\tau_{k+1}}\}$, $k\ge0$, are
  independent and identically distributed.  Further, for some
  constant $\mu$,
  \begin{equation}
    \label{eq:10}
    \frac{r_n}{n}  \to \mu
    \quad \text{a.s., \quad as $n\to\infty$}
  \end{equation}
  and, in the Skorohod topology,
  \begin{equation}
    \label{eq:15}
    \frac{r_{[nt]}-nt\mu}{\sqrt{n}} \to B(t)
    \quad \text{in distribution, \quad as $n\to\infty$},
  \end{equation}
  where, for any $a>0$, we denote by $[a]$ the integer part of $a$,
  and where $B$ is Brownian motion with some nontrivial diffusion
  constant.
\end{corollary}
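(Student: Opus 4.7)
The plan is to reduce all three assertions to classical limit theorems for the i.i.d.\ pairs
\[
(Y_k,\,T_k) := \bigl(r^{(\tau_k)}_{\tau_{k+1}},\; \tau_{k+1}-\tau_k\bigr), \qquad k\ge 0.
\]
By Proposition~\ref{prop:kuczek} the successive segments of $\{\xi_n\}$ between the $\tau_k$ are i.i.d., and for $0\le j\le T_k$ the variable $r^{(\tau_k)}_{\tau_k+j}$ is a measurable function of the $k$th such segment alone (this is where the definition of $r^{(\tau_k)}_{\cdot}$ as ``process restarted from a single infected site at time $\tau_k$'' is crucial). Hence the whole within-block processes $\bigl(r^{(\tau_k)}_{\tau_k+j}\bigr)_{0\le j\le T_k}$, and in particular the pairs $(Y_k,T_k)$, are i.i.d.\ in $k$. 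Combining this with \eqref{eq:16}, which on $F$ gives $r_{\tau_k+j}-r_{\tau_k}=r^{(\tau_k)}_{\tau_k+j}$, establishes the first assertion immediately (the segments being i.i.d.\ as increments from~$r_{\tau_k}$).

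For the strong law \eqref{eq:10}, Proposition~\ref{prop:kuczek} gives $\E T_0 = 1/p$ and geometric tails for $T_0$. Assuming the descendant sets have a first moment (in the bounded-range setting of Kuczek one simply has $|Y_k|\le C T_k$, so this is automatic), the SLLN applied to the i.i.d.\ pairs yields
\[
\tau_k/k \to 1/p
\quad\text{and}\quad
r_{\tau_k}/k \to \E Y_0 \quad \text{a.s.},
\]
so that $r_{\tau_k}/\tau_k \to p\,\E Y_0 =: \mu$. A standard interpolation then extends this to $r_n/n\to \mu$: for $\tau_k\le n<\tau_{k+1}$ the excursion $|r_n-r_{\tau_k}|$ is bounded by $C T_k$, and $T_k = O(\log k)$ a.s.\ by the geometric tail bound, which is negligible compared with $\tau_k$.

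For the invariance principle \eqref{eq:15}, the natural strategy is to apply Donsker's theorem to the zero-mean i.i.d.\ sums $S_k := \sum_{j=0}^{k-1}(Y_j - \mu T_j)$, whose polygonal rescaling converges in $J_1$ to a Brownian motion of variance $\mathrm{Var}(Y_0-\mu T_0)$. An Anscombe-type renewal time-change, exploiting $\tau_k \sim k/p$ together with the CLT for $\tau_k$ itself, then replaces the block index $k$ by the time index $n$ and yields a Brownian limit with diffusion constant $\mathrm{Var}(Y_0-\mu T_0)/p$. The main obstacle is the intra-block excursion: for $\tau_k\le n<\tau_{k+1}$ we have $|r_n-r_{\tau_k}|\le C T_k$, and tightness/Skorohod convergence require $\max_{k:\tau_k\le n} T_k = o(\sqrt n)$ uniformly. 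This is exactly where the \emph{light} tails of Proposition~\ref{prop:kuczek} are essential: geometric tails give $\max_{k\le n} T_k = O(\log n)$ a.s., so the interpolation error is $o(\sqrt n)$ uniformly, delivering both $J_1$-tightness and the stated functional CLT.
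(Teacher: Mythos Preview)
Your argument is correct and matches the paper's approach: the paper simply declares the corollary ``immediate'' from Proposition~\ref{prop:kuczek} together with the decomposition~\eqref{eq:16}, and what you have written is precisely the standard unpacking of that claim (i.i.d.\ block increments via~\eqref{eq:16}, then SLLN and Donsker/Anscombe with the geometric tails of Proposition~\ref{prop:kuczek} controlling the intra-block fluctuation). Your parenthetical reading of ``i.i.d.\ segments'' as increments from $r_{\tau_k}$ is also the intended one.
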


\begin{remark}
  We believe that it is worth also discussing briefly the more general
  case, considered by Mountford and Sweet~\cite{Mount}, in which the
  skip-free condition~\eqref{eq:13} is replaced by the more general
  condition that the sets $\eta_{n+1,x}$ have bounded support, if only
  as an illustration of our general thesis that everything depends of
  the appropriate definition of the sequence $\{F_n\}_{n\in\Z_+}$ of
  Section~\ref{sec:cond-infin-future}.  Here it is sufficient to
  redefine the events~$F_n$ and to show that their common probability
  remains strictly positive.  Thus, for each $n$, define not only the
  process $\{X^{(n)}_{n'}\}_{n'\ge n}$ as above (in which
  $X^{(n)}_n=\{0\}$) and its associated right-endpoint process
  $\{r^{(n)}_{n'}\}_{n'\ge n}$, but also the process
  $\{\overline{X}^{(n)}_{n'}\}_{n'\ge n}$ given by
  $\overline{X}^{(n)}_n=\Z_-$ (where $\Z_-$ is the set of nonpositive
  integers) and
  $\overline{X}^{(n)}_{n'+1}=f(\overline{X}^{(n)}_{n},\xi_{n+1})$ for
  $n'\ge n$; denote also the latter process's associated
  right-endpoint process by $\{\overline r^{(n)}_{n'}\}_{n'\ge n}$.
  Note that the process $\{\overline{X}^{(n)}_{n'}\}_{n'\ge n}$
  survives almost surely, and that, since
  $X^{(n)}_n\subset\overline{X}^{(n)}_n$, we have
  $X^{(n)}_{n'}\subseteq\overline X^{(n)}_{n'}$ for all $n'\ge n$.
  The event~$F_n$ is now defined to occur if and only if
  $r^{(n)}_{n'}=\overline r^{(n)}_{n'}$ for all $n'\ge n$.  (The
  latter condition implies the survival of the process
  $\{X^{(n)}_{n'}\}_{n'\ge n}$ and is equivalent to it in the earlier
  skip-free case.)  Further, the sequence $\{F_n\}_{n\in\Z_+}$
  continues to satisfy the conditions~(F1) and (F2) of
  Section~\ref{sec:cond-infin-future}, provided only that we can show
  that the common probability $p'$ of the events~$F_n$ is strictly
  positive.

  As before, we define the events $A_n$ of Theorem~\ref{th:1} by
  $A_n=F_n$ for all $n$, so that the times $\tau_k$ are once more the
  times of successive occurrences of the events~$F_n$.  It follows
  from the definition of the latter events that the
  condition~\eqref{eq:14} continues to hold for those $n\ge0$, $m>0$,
  such that event~$F_{n+m}$ occurs.  Thus the conditions of
  Corollary~\ref{cor:1} hold as in the skip-free case, and indeed the
  entire argument of that case holds also in the present more general
  case, subject only to the above proviso that $\Pr(F_n)>0$.  Thus, in
  this case, we once more obtain Proposition~\ref{prop:kuczek} (with
  $p$ replaced by $p'$) and Corollary~\ref{cor:Kuczek} describing the
  behaviour of the right-endpoint process.

  That $p'>0$ is shown by Mountford and Sweet~\cite{Mount}---in the
  most difficult part of their paper---using a block construction and
  under a condition on the random sets $\eta_{n+1,x}$ which limits the
  extent of the dependence between the events $\{a\in\eta_{n+1,x}\}$.
  While it seems likely that $p'>0$ in the present slightly more
  general model and that this should not be too difficult to show, we
  do not pursue this here.
\end{remark}

\subsection{A three-state contact process with immunisation}
\label{sec:three-state-contact}

We consider a model in which the susceptibility of sites to infection
depends on whether they have been previously infected.  As noted above
such models (in continuous time) have been considered by a number of
authors (Durrett and Schinazi \cite{Dur}, Stacey \cite{Sta}, Tzioufas
\cite{Tzi}).  We show here how right-endpoint, and hence growth,
behaviour can be deduced for a model with immunisation, in which
previously infected sites are more difficult to infect than those
which have not previously been infected.  What is interesting here is
that we do not have monotonicity of the process in the initial level
of infection, in that the introduction of additional infected sites at
time~$0$ may possibly, by premature infection and then immunisation of
neighbouring sites, reduce the number of infected sites at subsequent
times (see Stacey~\cite{Sta} for details).  This is in contrast to the
two-state contact process and to the ``reverse-immunisation''
three-state process mentioned earlier.  However, in the present model
there do still exist sufficient monotonicity-preserving couplings,
between instances of the process with suitably different initial
states, as to enable progress to be made with a little extra care,
notably in the definition below of the ``future'' events $F_n$ of
Section~\ref{sec:cond-infin-future} and below.  A further complication
is that the events $A_n$ of Section~\ref{sec:cond-infin-future} are no
longer simply defined by $A_n=F_n$, but rather the occurrence of the
event $A_n$ depends both on the past and the future behaviour of the
process $\{\xi_{n'}\}_{n'\ge1}$ relative to the time $n$.

We take our argument in stages: we consider first the model, then its
formulation as a stochastic recursion suitable for the application of
the theory of Section~\ref{sec:cond-infin-future}, and then the
definition of the events~$F_n$ and the times $\tau_k$ of
Section~\ref{sec:cond-infin-future}; finally we apply the earlier
theory and such additional arguments as are necessary to obtain our
results.

\paragraph{The model.}
\label{sec:model}

The varying susceptibility of sites forces a more careful
identification between sites at one time period and another.  We
therefore focus on the following generalisation of a simple oriented
percolation model, which possesses the skip-free
property~\eqref{eq:13} identified in the previous section and which is
the discrete time analogue of the three-state nearest-neighbour
contact process with a similar immunisation property.  The model
corresponds to oriented percolation through time on the integers in
which, given the state of the process at time $n$, each site which
would potentially be infected at time $n+1$ is only actually infected
with some fixed probability $q$, independently of all else, unless it
has never previously been infected, in which case it is infected with
probability one.

The state $X_n$ of the process $\{X_n\}_{n\ge0}$ at time $n$ is given by
$X_n=\{X_n(x),\,x\in\Z\}$ where each $X_n(x)\in\{-1,0,1\}$, and where
this has the interpretation:
\begin{displaymath}
  X_n(x) =
  \begin{cases}
    -1 & \quad\text{if the site $x$ is uninfected for all $n'\le n$,}\\
    0 & \quad\text{if the site $x$ is uninfected at time $n$ but has
      previously been infected,}\\
    1 & \quad\text{if the site $x$ is infected at time $n$}.
  \end{cases}
\end{displaymath}

In order to obtain a spatially symmetric model and to maintain the
above skip-free property, we make the restriction that, at each
time~$n$, the set of sites~$x$ which possibly may be infected
($X_n(x)=1$) is the set of integers $x\in\Z$ such that $n+x$ is even.
This will follow from the specification of the dynamics of the process
below provided we require that only evenly numbered sites may be
infected at time~$0$.  (In these dynamics, which we make precise
below, a site which is infected at time $n$ reverts to state $0$ at
time $n+1$; a site which is in state $-1$ or $0$ at time $n$ remains
in the same state at time~$n+1$ unless it becomes infected at time
$n+1$, in which case its state becomes $1$.)  The above restriction on
the locations of the infected sites at any time is of a purely
technical nature and is not necessary is the (more natural)
continuous-time version of the process.

Thus we assume that $X_0$ is such that $X_0(x)\ne1$ for $x$ odd.  The
state $X_{n+1}$ of the process is obtained from $X_n$ as follows: for
each $n$ and for each $x\in\Z$ such that $n+x$ is even, associate a
random set $\eta_{n+1,x}\subseteq\{-1,\,1\}$; the random sets
$\eta_{n+1,x}$ are assumed independent and identically distributed
over all $n$ and all~$x$.  Define also, for each $n$, the set of
``potentially infected'' sites $Y_{n+1}$ at time $n+1$, given by
\begin{displaymath}
  Y_{n+1} = \bigcup_{x:\;n+x \text{ even, }X_n(x)=1} (x + \eta_{n+1,x}).
\end{displaymath}
Then, for $x\in Y_{n+1}$ such that $X_{n}(x)=0$, we take
$X_{n+1}(x)=1$ with probability $q$ and $X_{n+1}(x)=0$ with
probability $1-q$, independently of all else; for $x\in Y_{n+1}$ such
that $X_{n}(x)=-1$, we take $X_{n+1}(x)=1$ with probability $1$.
For $x\notin Y_{n+1}$ we take $X_{n+1}(x)=-1$ if $X_{n}(x)=-1$ and
$X_{n+1}(x)=0$ otherwise.  (Note that, by induction, these dynamics do
indeed imply the property that, for all $n$ and for all $x$, we may
only have $X_n(x)=1$ when $n+x$ is even.  Note also that there is no
additional generality in allowing the above probability $1$, that a
never previously infected site in the set $Y_{n+1}$ becomes infected,
to be replaced by any other probability $q'\ge q$: in such a case we
may instead simply redefine the distribution of the random sets
$\eta_{n+1,x}$ to correspond to replacing each such set by the empty
set with probability $1-q'$, independently of all else; we then
replace $q'$ by $1$, and $q$ by $q/q'$, to re-express the model as an
instance of that already considered.)

We shall say that the process $\{X_n\}_{n\ge0}$ \emph{survives to
  time} $n$ if $X_n(x)=1$ for at least one $x$ and that it
\emph{survives} if it survives to all times $n\ge0$.  We assume that
the process $\{X_n\}_{n\ge0}$ is \emph{supercritical}, i.e.\ that, for any
$X_0$ such that $X_0(x)=1$ for at least one $x$, there is a strictly
positive probability that the process survives.  Note that, if $X'_0$
is obtained from $X_0$ by defining $X'_0(x)=\max(0,X_0(x))$ for all
$x$, and the resulting process $\{X'_n\}_{n\ge0}$ allowed to evolve as
above, then, in this coupling, the survival of the process
$\{X'_n\}_{n\ge0}$ implies that of the process $\{X_n\}_{n\ge0}$.  The
former process may be viewed as an instance of the basic two-state
contact process.  It follows in particular the supercriticality of the
present three-state process is equivalent to that of the two-state
process obtained as above.

Given the process $\{X_n\}_{n\ge0}$, for each $n$, define
$r_n=\max\{x: X_n(x)=1\}$ to be the \emph{right endpoint} of $X_n$
(with, as usual, $r_n=-\infty$ when $X_n(x)\ne1$ for all $x$).
Our interest in the behaviour of the process $\{r_n\}_{n\ge0}$, for
suitably chosen initial states $X_0$.  (As usual, this, taken together
with the corresponding behaviour of the left endpoint process,
characterises the growth of the process $\{X_n\}$.)

\paragraph{Formulation as a stochastic recursion and coupling.}
\label{sec:form-as-stoch}

We now reformulate the process $\{X_n\}_{n\ge0}$ as a stochastic
recursion~\eqref{SRS} as in Section~\ref{sec:cond-infin-future}.  The
i.i.d.\ driving sequence $\{\xi_n\}_{n\ge1}$ is given, for each
$n\ge0$, by the pair $\xi_{n+1}=(\xi'_{n+1},\,I_{n+1})$.  Here
$\xi'_{n+1}=\{\xi'_{n+1,z}\}_{z\le0,\,z\text{ even}}$ and each
$\xi'_{n+1,z}\subseteq\{-1,\,1\}$ is a random set with the common
distribution of the random sets $\eta_{n+1,x}$ above.  Further
$I_{n+1}=\{I_{n+1,z}\}_{z\le0,\,z\text{ even}}$ and each $I_{n+1,z}$
is an indicator random variable which takes the value $1$ with
probability $q$ and is otherwise $0$.  For each $n$ the random
elements $\xi'_{n+1}$ and $I_{n+1}$ are independent; further the
random sets $\xi'_{n+1,z}$ are independent over all $z$, as also are
the random variables $I_{n+1,z}$.  The process $\{X_n\}_{n\ge0}$ is
now updated as described above, via the stochastic
recursion~\eqref{SRS}, taking, for each $n$ and $x$,
\begin{equation}
  \label{eq:17}
  \eta_{n+1,x} = \xi'_{n+1,x-r_n},
\end{equation}
analogously to \eqref{eq:8}.  Further, given $n$, let $r'_{n+1}$ be
the rightmost point of the set $Y_{n+1}$ defined above; in the case
where $x'\in Y_{n+1}$ is such that $X_{n-1}(x')=0$ or $X_{n-1}(x')=1$,
we take
\begin{equation}
  \label{eq:18}
  X_{n+1}(x')=I_{n+1,x'-r'_{n+1}},
\end{equation}
while for $x'\in Y_{n+1}$ such that $X_{n-1}(x')=-1$ we already have
$X_{n+1}(x')=1$.

For each $n\ge0$, let $\{\hat{X}^{(n)}_{n'}\}_{n'\ge n}$ be any
version of the process $\{X_{n'}\}_{n'\ge n}$, started at time~$n$ and
defined through the above stochastic recursion~\eqref{SRS} using
\eqref{eq:17} and \eqref{eq:18} (with $\hat{X}^{(n)}_{n'}$ replacing
$X_{n'}$), in which $\hat{X}^{(n)}_{n}(0)=1$ and
$\hat{X}^{(n)}_{n}(x)=-1$ for all $x>0$.  Let
$\{\hat{r}^{(n)}_{n'}\}_{n'\ge n}$ and $\{\hat{l}^{(n)}_{n'}\}_{n'\ge
  n}$ be respectively the left and right endpoint processes associated
with this process (i.e.\
$\hat{r}^{(n)}_{n'}=\max\{x:\hat{X}^{(n)}_{n'}(x)=1\}$, with
$\hat{r}^{(n)}_{n'}=-\infty$ if no such $x$ exists, and
$\hat{l}^{(n)}_{n'}=\min\{x:\hat{X}^{(n)}_{n'}(x)=1\}$, with
$\hat{l}^{(n)}_{n'}=\infty$ if no such $x$ exists).  (Note that, for
each $n'\ge n$, it is only possible to have $\hat{X}^{(n)}_{n'}(x)=1$
at those sites $x$ such that $n'-n+x$ is even.  Since our various
processes will eventually be coupled starting from their right
endpoints, this is as it ought to be.)

Define also the particular version $\{X^{(n)}_{n'}\}_{n'\ge n}$ of the
above process, in which
\begin{displaymath}
  X^{(n)}_n(x) =
  \begin{cases}
    1 & \quad\text{if $x=0$,}\\
    0 & \quad\text{if $x<0$,}\\
    -1 & \quad\text{if $x>0$,}
  \end{cases}
\end{displaymath}
Let also $\{r^{(n)}_{n'}\}_{n'\ge n}$ and $\{l^{(n)}_{n'}\}_{n'\ge n}$
denote respectively the associated right- and left-endpoint processes.
Note that it follows from the skip-free property of the dynamics of
the process $\{X^{(n)}_{n'}\}_{n'\ge{n}}$ that, for all $n'$ to which
this process survives,
\begin{equation}
  \label{eq:32}
  X^{(n)}_{n'}(x) \ne -1 \text{ for all $x \le r^{(n)}_{n'}$},
  \qquad
  X^{(n)}_{n'}(x) = 0 \text{ for all $x <l^{(n)}_{n'}$}.
\end{equation}

We now require the following lemma, which is a simple generalisation
of the classical result for the nearest-neighbour two-state contact
process on $\Z$, and which gives the basic coupling on which the
application of the theory of Section~\ref{sec:cond-infin-future}
depends.

\begin{lemma}
  \label{lemma:coupling}
  For any instance of the process $\{\hat{X}^{(n)}_{n'}\}_{n'\ge n}$
  defined above and for the particular instance given by
  $\{X^{(n)}_{n'}\}_{n'\ge n}$, for each $n'$ to which the latter
  process survives,
  \begin{equation}\label{eq:31}
    \hat{X}^{(n)}_{n'}(x) = X^{(n)}_{n'}(x)
    \quad\text{ for all $x\ge l^{(n)}_{n'}$}.
  \end{equation}
  In particular we have $\hat{r}^{(n)}_{n'} = r^{(n)}_{n'}$ for all
  $n'$ to which the process $\{X^{(n)}_{n'}\}_{n'\ge n}$ survives.
\end{lemma}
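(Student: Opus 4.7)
The plan is to prove, by induction on $n' \ge n$ (as long as $X^{(n)}$ survives to $n'$), the strengthened assertion that $\hat{r}^{(n)}_{n'} = r^{(n)}_{n'}$ and $\hat{X}^{(n)}_{n'}(x) = X^{(n)}_{n'}(x)$ for every $x \ge l^{(n)}_{n'}$. The base case $n'=n$ is immediate from the specified initial data, which agree on $\{x \ge 0\}$ and share the common right endpoint $0$.

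For the inductive step, the key structural observation is that, under the induction hypothesis, the set of infected sites of $X^{(n)}_{n'}$ (contained in $[l^{(n)}_{n'},r^{(n)}_{n'}]$ by definition) coincides with the set of infected sites of $\hat{X}^{(n)}_{n'}$ at positions $\ge l^{(n)}_{n'}$; any \emph{extra} infected sites of $\hat{X}^{(n)}_{n'}$ are confined to positions strictly less than $l^{(n)}_{n'}$. By the skip-free property \eqref{eq:13}, the time-$(n'+1)$ descendants of these extras are dominated position-wise by the descendants of the leftmost infected site of $X^{(n)}_{n'}$. Consequently the potentially-infected sets $\hat{Y}_{n'+1}$ and $Y^{X^{(n)}}_{n'+1}$ differ only at positions within or to the left of the descendant region of $l^{(n)}_{n'}$; in particular their rightmost points coincide (being determined by the common rightmost infected site $r^{(n)}_{n'}$ via skip-free), so the coupling indicator $I_{n'+1,\,x-r'_{n'+1}}$ matches between the two processes. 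Combining this with the inductive equality of previous states yields $\hat{X}^{(n)}_{n'+1}(x) = X^{(n)}_{n'+1}(x)$ for every $x \ge l^{(n)}_{n'}$.

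The main obstacle is the one remaining case $l^{(n)}_{n'+1} = l^{(n)}_{n'}-1$, in which the range $x \ge l^{(n)}_{n'+1}$ also includes the position $x_0 := l^{(n)}_{n'}-1$ about which the inductive hypothesis is silent. This is exactly where the particular structure of $X^{(n)}$ (state $0$ filling the whole region to the left of $l^{(n)}_n$) is essential. The equality $X^{(n)}_{n'+1}(x_0) = 1$ forces $n'+l^{(n)}_{n'}$ to be even by the model's parity constraint, so $n'+x_0$ is odd and therefore $\hat{X}^{(n)}_{n'}(x_0) \in \{-1,0\}$. Meanwhile, \eqref{eq:32} applied to $X^{(n)}$ gives $X^{(n)}_{n'}(x_0) = 0$, and \eqref{eq:18} then forces the coupling variable $I_{n'+1,\,x_0-r'_{n'+1}}$ to take the value $1$. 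Whichever of $\hat{X}^{(n)}_{n'}(x_0) = -1$ or $\hat{X}^{(n)}_{n'}(x_0) = 0$ actually holds, the site $x_0$ becomes infected in $\hat{X}^{(n)}_{n'+1}$ (automatically in the first case, via the same $I=1$ in the second), matching $X^{(n)}_{n'+1}(x_0) = 1$. Finally, $\hat{r}^{(n)}_{n'+1} = r^{(n)}_{n'+1}$ follows at once: an infected site of $\hat{X}^{(n)}_{n'+1}$ strictly beyond $r^{(n)}_{n'+1}$ would lie above $l^{(n)}_{n'+1}$, contradicting the equality just established.
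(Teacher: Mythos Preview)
Your proof is correct and follows the same inductive strategy as the paper's proof. Both argue by induction on $n'$, exploiting the skip-free property to control what the ``extra'' infected sites of $\hat{X}^{(n)}$ (those at positions $<l^{(n)}_{n'}$) can do, and using the structure \eqref{eq:32} of $X^{(n)}$ at the left boundary.

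Your write-up is in fact more careful than the paper's on two points. First, you explicitly verify that the rightmost point $r'_{n'+1}$ of the potentially-infected set is the same for both processes, so that the coupling index in \eqref{eq:18} matches; the paper leaves this implicit. Second, you isolate and treat the boundary position $x_0=l^{(n)}_{n'}-1$ when $l^{(n)}_{n'+1}=l^{(n)}_{n'}-1$, showing directly that $\hat{X}^{(n)}_{n'+1}(x_0)=1$ regardless of whether $\hat{X}^{(n)}_{n'}(x_0)$ equals $-1$ or $0$; the paper handles this case only via the blanket claim that both processes are $\ne-1$ on $[l^{(n)}_{n'+1},r^{(n)}_{n'+1}]$, which requires exactly the observation you spell out. (The paper's further assertion that both processes equal $-1$ for all $x>r^{(n)}_{n'+1}$ is in fact not literally true when the right endpoint has previously retreated; your argument, which propagates equality for all $x\ge l^{(n)}_{n'}$ directly from the inductive hypothesis, avoids this issue.)

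One minor remark: your parenthetical ``being determined by the common rightmost infected site $r^{(n)}_{n'}$ via skip-free'' tacitly assumes $\eta_{n'+1,r^{(n)}_{n'}}\ne\emptyset$. The conclusion $\max\hat{Y}_{n'+1}=\max Y^{X^{(n)}}_{n'+1}$ is still correct in general (apply skip-free with $y$ the rightmost common infected site having nonempty $\eta$), but you might add a word to cover that case.
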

\begin{proof}
  The proof is also a simple generalisation of that which is
  well-known for the classical two-state case, and is by induction on
  $n'\ge n$.  Thus suppose that \eqref{eq:31} holds for some
  particular $n'\ge n$ such that the process $\{X^{(n)}_{n'}\}_{n'\ge
    n}$ survives to (at least) time $n'+1$.  It then follows from the
  dynamics of the two processes involved, using also the above
  observations~\eqref{eq:32}, that, for every $x$ such that
  $X^{(n)}_{n'+1}(x)=1$, we also have $\hat{X}^{(n)}_{n'+1}(x)=1$.
  Further, since the random sets $\eta_{n+1,x}$ are subsets of
  $\{0,1\}$ (the skip-free property of the present model), any site
  $x\ge l^{(n)}_{n'+1}$ such that $\hat{X}^{(n)}_{n'+1}(x)=1$ is
  necessarily infected (at least) by some site $x'$ such that
  $X^{(n)}_{n'}(x')=1$ so that also $X^{(n)}_{n'+1}(x)=1$ (i.e.\ no
  additional infection can pass from the left of $l^{(n)}_{n'}$ at
  time $n'$ to the right of $l^{(n)}_{n'+1}$ at time $n'+1$).  Since
  also $X^{(n)}_{n'+1}(x)\ne-1$ and $\hat{X}^{(n)}_{n'+1}(x)\ne-1$ for
  all $x \le r^{(n)}_{n'+1}$, and
  $\hat{X}^{(n)}_{n'+1}(x)={X}^{(n)}_{n'+1}(x)=-1$ for all
  $x>r^{(n)}_{n'+1}$, we have that \eqref{eq:31} holds with $n'$
  replaced by $n'+1$.
  % The proof is also a simple generalisation of that of the classical
  % two-state case: the equation~\eqref{eq:31} follows by induction on
  % $n'\ge n$ from the dynamics of the two processes involved, using
  % also the above observations~\eqref{eq:32}.  In particular, if the
  % leftmost infected site $l^{(n)}_{n'}$ of the process
  % $\{X^{(n)}_{n'}\}_{n'\ge n}$ at time~$n'$ infects its neighbour to
  % the left (necessarily in the ``immunised'' state~$0$ at time~$n'$)
  % at time $n'+1$, then the same site is infected at that time in the
  % process $\{\hat{X}^{(n)}_{n'}\}_{n'\ge n}$; further the skip-free
  % property~\eqref{eq:13} of the present model ensures that, in the
  % process $\{\hat{X}^{(n)}_{n'}\}_{n'\ge n}$, no additional infection
  % can pass from the left of $l^{(n)}_{n'}$ at time $n'$ to the right
  % of $l^{(n)}_{n'+1}$ at time $n'+1$.
\end{proof}

We shall also require below the particular instance
$\{\overline{X}_{n}\}_{n\ge 0}$ of the process
$\{\hat{X}^{(0)}_{n}\}_{n\ge 0}$ defined above (and started at
time~$0$) given by
\begin{equation}\label{eq:33}
  \overline{X}_0(x) =
  \begin{cases}
    1 & \quad\text{if $x\le0$,}\\
    -1 & \quad\text{if $x>0$.}
  \end{cases}
\end{equation}

This process is useful since, almost surely, it survives for all
time.  This enables us to make some necessary definitions without an
\emph{a priori} need to condition on survival.  Define also
$\{\overline{r}_{n}\}_{n\ge 0}$  to be the right-endpoint
process associated with the process $\{\overline{X}_{n}\}_{n\ge 0}$.

\paragraph{Definition of events $F_n$ and times $\tau_k$. }

We define the sequence of events $\{F_n\}_{n\in\Z_+}$ of
Section~\ref{sec:cond-infin-future}, analogously to
Section~\ref{sec:two-state-contact}, by saying that the event~$F_n$
occurs if and only if the process $\{X^{(n)}_{n'}\}_{n'\ge n}$
survives for all future time, i.e.\ $r^{(n)}_{n'}>-\infty$ for all
$n'\ge{n}$.  It again follows from the definition of the process
$\{X^{(n)}_{n'}\}_{n'\ge n}$ that the sequence $\{F_n\}_{n\in\Z_+}$
satisfies the conditions (F1) and (F2) of
Section~\ref{sec:cond-infin-future}.  (That the common value $p$, say,
of $\Pr(F)_n$ is strictly positive follows once more from our earlier
assumption of supercriticality.)

For each $n\ge0$, define the event
\begin{equation}\label{eq:29}
  H_{n} = \{\overline{X}_{n}(x) = -1
  \quad\text{for all $x>\overline{r}_{n}$}\}
\end{equation}
(where $\{\overline{X}_{n}\}_{n\ge 0}$ is the process defined
above with initial state $\overline{X}_0$ given by
\eqref{eq:33}).  Note that an equivalent definition is that
$H_{n}=\{\overline{r}_n\ge\overline{r}_{n'}\text{ for all $n'<n$}\}$,
i.e.\ that $n$ is such that at that time the right-endpoint process
$\{\overline{r}_{n}\}_{n\ge0}$ is at a record value.  Note also
that the event $H_{0}$ always occurs, and further that, for all $n$,
we have $H_{n}\in{\si}_{n}$.

We define the sequence of events $\{A_n\}_{n\ge0}$ of
Theorem~\ref{th:1} by, for each $n$, $A_n=H_n\cap F_n$.  As usual the
random times $\tau_k$ are the successive times of occurrence of the
events $A_n$.  (The more complex definition of the events $A_n$, in
comparison with that of Section~\ref{sec:two-state-contact}, is
required to make the right-endpoint couplings below work correctly.)

We can now state and prove the following analogue of
Proposition~\ref{prop:kuczek}.

\begin{theorem} 
  \label{thm:kuczek3}
  The successive (segments of) processes
  $\{\xi_{\tau_k+1},\dots,\xi_{\tau_{k+1}}\}$, $k\ge0$, are
  independent and identically distributed.  Further the distribution
  of each of these lengths is light-tailed, i.e.\ geometrically
  bounded, and in particular possesses moments of all orders.
\end{theorem}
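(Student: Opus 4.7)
The plan is to apply Corollary~\ref{cor:2} with the sequences $\{F_n\}$ and $\{H_n\}$ defined in the section to get the i.i.d.\ segment property, and then to adapt the geometric-tail argument of Proposition~\ref{prop:kuczek}. The central tool is Lemma~\ref{lemma:coupling}, invoked twice: once to transport dynamics from $\{\overline{X}_n\}$ to $\{X^{(n)}\}$ via the spatial shift by $\overline{r}_n$, and once to relate $\{X^{(n)}\}$ to $\{X^{(n+m)}\}$ via a further shift.

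For the past-side condition \eqref{eq:43}, observe that under $H_n$ the translated process $\{\overline{X}_{n'}(\,\cdot\,+\overline{r}_n)\}_{n'\ge n}$ is an instance of the generic process $\{\hat{X}^{(n)}_{n'}\}_{n'\ge n}$ (its right-side initial values are all $-1$ by definition of $H_n$), and under $F_n$ the process $\{X^{(n)}_{n'}\}$ survives, so Lemma~\ref{lemma:coupling} gives $\overline{r}_{n'}-\overline{r}_n=r^{(n)}_{n'}$ for all $n'\ge n$. Combined with $H_n$ (so $\overline{r}_{n'}\le\overline{r}_n$ for $n'<n$), the event $H_{n+m}$ then reduces, on $A_n$, to
\begin{equation*}
E''_{n,n+m}=\bigl\{r^{(n)}_{n+m}\ge r^{(n)}_{n'}\text{ for all }n\le n'<n+m\bigr\}\in\si_{n+1,n+m},
\end{equation*}
which is stationary in $n$. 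For the future-side condition \eqref{eq:1}, the role of $H_{n+m}$ is essential. Set $E'_{n,n+m}=\{X^{(n)}\text{ survives to time }n+m\}\in\si_{n+1,n+m}$. On $H_n\cap E'_{n,n+m}$ a first use of Lemma~\ref{lemma:coupling} gives $\overline{r}_{n'}-\overline{r}_n=r^{(n)}_{n'}$ for $n\le n'\le n+m$; adjoining $H_{n+m}$ (from $A_{n+m}$) then forces $r^{(n)}_{n+m}$ to be a record of $\{r^{(n)}_{n'}\}_{n\le n'\le n+m}$, which by the skip-free dynamics of the model ensures that every site strictly to the right of $r^{(n)}_{n+m}$ at time $n+m$ is in state $-1$. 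The translate of $\{X^{(n)}_{n'}\}_{n'\ge n+m}$ by $-r^{(n)}_{n+m}$ is therefore an instance of $\hat{X}^{(n+m)}$, and a second application of the lemma, combined with $F_{n+m}$, yields its survival, whence $F_n$. Thus $H_n\cap E'_{n,n+m}\cap A_{n+m}\subseteq A_n$; the reverse inclusion is immediate since $F_n$ implies $E'_{n,n+m}$. This gives $A_n\cap A_{n+m}=H_n\cap E'_{n,n+m}\cap A_{n+m}$, and Corollary~\ref{cor:2} now yields the i.i.d.\ property.

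For the geometric tail bound on $\tau_{k+1}-\tau_k$, it suffices by the i.i.d.\ property to bound the tail of $\tau'=\min\{n\ge 1:A_n\}$. I adapt the attempt-and-wait scheme from the end of Section~\ref{sec:two-state-contact}, which requires two additional ingredients: the analogue of \eqref{eq:19} for the three-state extinction time $\rho^{(n)}$, which I obtain by coupling with the associated two-state process (three-state extinction implies two-state extinction in no more time, so the two-state tail transfers); and the claim that times $n$ at which $H_n$ holds recur within geometrically bounded gaps. I expect the second to be the main obstacle: the intended approach is to dominate $\{\overline{X}_n\}$ from above by the two-state process with the same fully-infected-half-line initial data, to apply Proposition~\ref{prop:kuczek} there, and to use a Lemma~\ref{lemma:coupling}-style identification to transfer the record-gap statistics back to $\{\overline{r}_n\}$. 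With both ingredients in hand, the successive attempts---each independent because each resumes only after the previous $\{X^{(n)}\}$ has died---succeed independently with probability $p>0$, so $\tau'$ is dominated by a geometric sum of independent geometrically bounded variables and is hence itself geometrically bounded.
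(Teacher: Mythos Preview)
Your verification of the conditions of Corollary~\ref{cor:2} is correct and matches the paper's argument; you even make explicit why $H_{n+m}$ (via the induced record of $r^{(n)}$) is needed before Lemma~\ref{lemma:coupling} can be applied a second time to couple $X^{(n)}$ with $X^{(n+m)}$.

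The geometric-tail half has a real gap in each of your two ingredients. For the analogue of~\eqref{eq:19}: your coupling claim ``three-state extinction implies two-state extinction in no more time'' is correct (the two-state infected set sits inside the three-state one), but the inequality $\rho^{(2)}\le\rho^{(3)}$ points the wrong way. From $\{n\le\rho^{(3)}<\infty\}$ you deduce only $\{\rho^{(2)}<\infty\}$, not $\{n\le\rho^{(2)}<\infty\}$, so the two-state bound~\eqref{eq:19} does not transfer. The paper's device is different: let $\tilde\rho$ be the first time~$n$ at which the two-state process launched at time~$n$ (single infected, all else in state~$0$) survives; a right-endpoint coupling shows that if $X^{(0)}$ survives to time~$\tilde\rho$ it survives forever, whence $\Pr(n\le\rho<\infty)\le\Pr(\tilde\rho>n)$, and the latter is geometrically bounded by the theory of Section~\ref{sec:two-state-contact}. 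For the $H_n$-recurrence: there is no two-state process with the half-line initial data that dominates $\overline X$ from \emph{above} (state~$-1$ is the most favourable for growth), and domination from below does not control record-time gaps. The paper sidesteps this by never bounding the time gaps directly. It bounds the \emph{spatial} increments $\overline r_{\nu_{k+1}}-\overline r_{\nu_k}$ instead (each equals the maximum of $r^{(\nu_k)}$ before extinction, hence is at most the extinction time, geometrically bounded by the first ingredient), so that $\overline r_{\nu_K}$ is a geometric sum of i.i.d.\ light-tailed terms; only then is time recovered from space via a two-state minorant $\tilde r_n\le\overline r_n$ together with the regenerative linear growth of Proposition~\ref{prop:kuczek} and Corollary~\ref{cor:Kuczek}.
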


\begin{proof}
  We show first that the sequences of events $\{F_n\}_{n\ge0}$ and
  $\{H_n\}_{n\ge0}$ defined above are such that the conditions of
  Corollary~\ref{cor:1} are satisfied.  Note first that it follows
  from Lemma~\ref{lemma:coupling} (applied at the time~$n$) that, for
  any $n$ such that the event $H_n$ occurs,
  \begin{equation}
    \label{eq:6}
    \overline{r}_{n'} = \overline{r}_{n} + r^{(n)}_{n'}
    \qquad\text{for all $n'\ge n$ to which the process
      $\{X^{(n)}_{n'}\}_{n'\ge n}$ survives}. 
  \end{equation}
  We show first the condition~(b) of Corollary~\ref{cor:2}.  Fix
  $n\ge0$ and $m>0$.  Given that the event $A_n$ occurs (which implies
  both the occurrence of the event $H_n$ and the survival for all time
  of the process $\{X^{(n)}_{n'}\}_{n'\ge n}$), it follows from
  \eqref{eq:6} that the event $H_{n+m}$ occurs if and only if there
  occurs the event $E''_{n,n+m}$ that the process
  $\{r^{(n)}_{n'}\}_{n'\ge n}$ is at a record value at time $n+m$,
  i.e.\ $r^{(n)}_{n+m}\ge r^{(n)}_{n+m'}$ for all $0\le m'<m$.  The
  condition~(b) of Corollary~\ref{cor:2} is now immediate, since the
  array $\{E''_{n,n+m}\}_{n\ge0,m>0}$ trivially possesses the
  properties required by that condition.
  For the condition~(a) of Corollary~\ref{cor:2}, again fix $n\ge0$
  and $m>0$.  Given that the event $H_n\cap A_{n+m}$ occurs, it
  follows from \eqref{eq:6} (both as stated and with $n$ replaced by
  $n+m$) that the event $A_n$ occurs if and only if there occurs the
  event $E'_{n,n+m}$ that the process $\{X^{(n)}_{n'}\}_{n'\ge n}$
  survives to time $n+m$. The condition~(a) of Corollary~\ref{cor:2}
  now follows, since again we have that the array
  $\{E'_{n,n+m}\}_{n\ge0,m>0}$ trivially possesses the properties
  required by that condition.
  Thus the first statement of the present theorem follows from
  Corollary~\ref{cor:2}.

  In order to prove the second statement, define the random time
  $\tau'=\{\min n\ge1:A_n\text{ occurs}\}$.  As in the corresponding
  argument for the second statement of Proposition~\ref{prop:kuczek},
  it is sufficient to show that the (unconditional) distribution of
  $\tau'$ is geometrically bounded.

  We show first that, if $\rho$ is the first time to which the
  process $\{X^{(0)}_{n}\}_{n\ge 0}$ fails to survive, then, as for
  the basic two-state contact process, there exists $\alpha>0$ such
  that
  \begin{equation}
    \label{eq:20}
    \Pr(n \le \rho < \infty) \le e^{-\alpha n}, \qquad n\ge0.
  \end{equation}
  For each $n\ge0$ define the process $\{\tilde{X}^{(n)}_{n'}\}_{n'\ge
    n}$ via the above stochastic recursion~\eqref{SRS} and
  \eqref{eq:17}, \eqref{eq:18}, with the initial state
  $\tilde{X}^{(n)}_{n}$ given by $\tilde{X}^{(n)}_{n}(0)=1$ and
  $\tilde{X}^{(n)}_{n}(x)=0$ for all other $x$.  Let $\tilde{\rho}$ be
  the minimum value of $n\ge0$ such that the process
  $\{\tilde{X}^{(n}_{n'}\}_{n'\ge n}$ (started at time~$n$) survives.
  Suppose now that the process $\{X^{(0)}_{n'}\}_{n'\ge 0}$ survives
  to time $\tilde{\rho}$; then, in yet another instance of the
  coupling arguments used above (in which processes are ``matched''
  from their right endpoints), it follows from the above stochastic
  recursion that the process $\{X^{(0)}_{n'}\}_{n'\ge 0}$ necessarily
  survives for all time.  We deduce that if $\rho<\infty$ then
  necessarily $\rho<\tilde{\rho}$, and thus we conclude that, for all
  $n\ge0$,
  \begin{equation}\label{eq:21}
    \Pr(n \le \rho < \infty) \le \Pr(\tilde{\rho} > n).
  \end{equation}
  However, given the driving sequence $\{\xi_n\}_{n\ge1}$, the
  successive processes $\{\tilde{X}^{(n)}_{n'}\}_{n'\ge n}$ are simply
  successive instances of the basic two-state nearest-neighbour
  contact process, in each case started with a single infective,
  launched by the sequence $\{\xi_n\}_{n\ge1}$ exactly as in
  Section~\ref{sec:two-state-contact}.  We have already observed in
  that section that the time required to initiate such a process which
  survives (the time to the first of the events $F_n$ of that section)
  is geometrically bounded.  The required conclusion~\eqref{eq:20} now
  follows from this and \eqref{eq:21}.

  To complete the proof, we need to show that the distribution of
  $\tau'$ is geometrically bounded.  We argue as in
  Section~\ref{sec:two-state-contact}, again with a little extra
  complication.  We again proceed forward in time, starting at time
  $\nu_1=1$ and checking at that, and at selected subsequent times
  $\nu_k$, $k>1$, such that the event $H_{\nu_k}$ occurs, whether
  the event $F_{\nu_k}$ also occurs; if it fails to do so we wait
  until the process $\{X^{(\nu_k)}_{n'}\}_{n'\ge\nu_k}$ dies out---which
  we have shown it then does in a time which is geometrically
  bounded---before resuming checking for $F_{\nu_{k+1}}$ at the first
  \emph{subsequent} time~$\nu_{k+1}$ such that $H_{\nu_{k+1}}$
  occurs.  It follows, again from the right-endpoint coupling of
  Lemma~\ref{lemma:coupling} as in the first part of the present
  proof, that, for each such $k\ge1$, the increment
  $\overline{r}_{\nu_{k+1}}-\overline{r}_{\nu_k}$ is equal
  to the maximum value attained by the right endpoint of the process
  $\{X^{(\nu_k)}_{n'}\}_{n'\ge\nu_k}$ prior to its dying out, and so
  this increment is geometrically bounded.  Further, from the
  construction, the successive increments
  $\overline{r}_{\nu_{k+1}}-\overline{r}_{\nu_k}$ are
  i.i.d..  Let $K$ be the number of checks required to obtain a
  success (the event $F_{\nu_K}$ occurs).  Then, since, each of the
  above checks is independently successful with probability $p>0$, the
  random variable $K$ is geometrically distributed, independently of
  the above increments in the right-endpoint
  process~$\{\overline{r}_{n}\}$.  Thus
  $\overline{r}_{\nu_K}-\overline{r}_1$ is a geometric sum
  of i.i.d.\ geometrically bounded random variables, and so
  $\overline{r}_{\nu_K}$ is geometrically bounded.

  Now let $\{\tilde{r}_{n}\}_{n\ge0}$ be the right-endpoint of the
  three-state process $\{\tilde{X}_{n}\}_{n\ge0}$ in which
  $\tilde{X}_0(0)=1$ and $\tilde{X}_0(x)=0$ for $x\ne0$.
  Then, from the usual coupling
  $\tilde{r}_{n}\le\overline{r}_{n}$ for all $n$, and so
  also $\tilde{r}_{\nu_K}$ is geometrically bounded.  However,
  $\{\tilde{X}_{n}\}_{n\ge0}$ is simply an instance of the
  supercritical two-state contact process, in which the set of
  initially infected sites is $\Z_-$.  It follows easily from the results of
  Kuczek for the regenerative behaviour of this process (as given in
  the previous section) that, since $\tilde{r}_{\nu_K}$ is
  geometrically bounded, the random variable $\nu_K$ is
  itself geometrically bounded.  Since $\nu_K$ is the time~$n$ to the
  occurrence of \emph{some} event $A_n$, the result that $\tau'$ is
  geometrically bounded now follows.
\end{proof}

Finally, consider again any instance $\{\hat X_n\}_{n\ge0}$ of our
three-state process, defined by the stochastic recursion~\eqref{SRS}
via using \eqref{eq:17} and \eqref{eq:18} as above, in which the
initial state $\hat X_0$ is such that $\hat{X}_0(x_0)=1$ for some
$x_0$ and $\hat{X}_0(x)=-1$ for all $x>x_0$.  Again let
$\hat{r}_{n'}=\max\{x:\hat{X}_{n'}(x)=1\}$ be its associated
right-endpoint process.  Let $F$ be the event that the process
$\{\hat{X}_n\}_{n\ge0}$ survives.  It follows from the earlier
coupling for this process (with the time $\tau_0$ replacing the
time~$0$) that event $F$ occurs if and only if the process $\{\hat
X_n\}_{n\ge0}$ survives to time $\tau_0$.  Analogously to the
situation for the two-state process, we have that $F_0\subseteq F$,
and that, conditional on the event $F$ and for all $k\ge0$,
\begin{equation}\label{eq:22}
  \hat r_{\tau_k+n'} = \hat r_{\tau_0} +
  \sum_{j=0}^{k-1}r^{(\tau_j)}_{\tau_{j+1}}
  + r^{(\tau_k)}_{\tau_k+n'} \qquad\text{for all $n'\ge0$.}
\end{equation}
Thus, again as for the two-state process, we have the following
corollary to Theorem~\ref{thm:kuczek3}.

\begin{corollary}
  \label{cor:kuczek3}
  For the process $\{\hat X_n\}_{n\ge0}$ and on the set $F$ defined
  above, the successive (segments of) processes $\{\hat
  r_{\tau_k+1},\dots,\hat r_{\tau_{k+1}}\}$, $k\ge0$, are independent
  and identically distributed.  Further, for some constant $\mu$,
  \begin{equation}
    \label{eq:30}
    \frac{\hat r_n}{n}  \to \mu
    \quad \text{a.s., \quad as $n\to\infty$}
  \end{equation}
  and, in the Skorohod topology,
  \begin{equation}
    \label{eq:26}
    \frac{\hat r_{[nt]}-nt\mu}{\sqrt{n}} \to B(t)
    \quad \text{in distribution, \quad as $n\to\infty$},
  \end{equation}
  where again $B$ is Brownian motion with some nontrivial diffusion
  constant.
\end{corollary}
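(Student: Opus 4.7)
The plan is to derive both conclusions from Theorem~\ref{thm:kuczek3} by exhibiting, conditional on $F$, the right-endpoint process $\{\hat r_n\}$ as a cumulative renewal-reward process driven by the i.i.d.\ blocks identified there, and then invoking the classical strong law and invariance principle for such processes.

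Conditional on $F$, identity \eqref{eq:22} gives $\hat r_{\tau_k} = \hat r_{\tau_0} + \sum_{j=0}^{k-1} Y_j$ with $Y_j := r^{(\tau_j)}_{\tau_{j+1}}$, and by Theorem~\ref{thm:kuczek3} the pairs $(L_j, Y_j)$, where $L_j := \tau_{j+1}-\tau_j$, are i.i.d.\ for $j\ge 1$ with $L_j$, and hence by the skip-free property \eqref{eq:13} also $|Y_j|\le L_j$, geometrically bounded. Write $\mu_L := \E L_1$, $\mu_Y := \E Y_1$, $\mu := \mu_Y/\mu_L$ and $\eta_j := Y_j - \mu L_j$. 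For any $\tau_k < n \le \tau_{k+1}$ we have $\hat r_n = \hat r_{\tau_0} + \sum_{j=0}^{k-1} Y_j + r^{(\tau_k)}_n$ with $|r^{(\tau_k)}_n| \le L_k$, placing $\hat r_n$ in the standard renewal-reward framework.

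For the strong law \eqref{eq:30} I combine the elementary renewal theorem (giving $N(n)/n \to 1/\mu_L$ a.s., where $N(n) := \max\{k:\tau_k \le n\}$) with the SLLN for $\{Y_j\}$; the boundary contributions $\hat r_{\tau_0}$ and $|r^{(\tau_{N(n)})}_n|$ are both $o(n)$ a.s.\ by Borel--Cantelli applied to the geometric tails of $L_j$. For the invariance principle \eqref{eq:26} I apply the standard functional central limit theorem for renewal-reward processes: writing
\[
\hat r_{[nt]} - [nt]\mu = \sum_{j=1}^{N([nt])} \eta_j + R_n(t),
\]
where the remainder $R_n(t)$ is uniformly of order $L_{N([nt])+1} + |\hat r_{\tau_0}| + 1 = o_{\mathbf{P}}(\sqrt n)$ by the light tails, Donsker's theorem combined with an Anscombe-type random time change yields convergence of the main term in the Skorohod topology to a Brownian motion with diffusion coefficient $\sigma^2 = \mathrm{Var}(\eta_1)/\mu_L$.

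The main obstacle is verifying $\sigma^2 > 0$, i.e.\ that $Y_1$ is not almost surely a deterministic multiple of $L_1$. I would handle this by exhibiting, for some block length of positive probability, two configurations of the driving sequence producing distinct right-endpoint increments, reflecting the genuinely stochastic nature of the three-state dynamics; the immunisation rule and the independent Bernoulli variables $I_{n,z}$ ensure ample randomness for such an argument. All remaining steps---the boundary-term estimates, the random-time-change argument, and the passage from conditioning on $F_0$ to conditioning on $F$ using $F_0 \subseteq F$ as noted just above \eqref{eq:22}---are routine.
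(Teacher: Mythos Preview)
Your proposal is correct and follows essentially the same route as the paper: the paper treats the corollary as an immediate consequence of Theorem~\ref{thm:kuczek3} together with the decomposition~\eqref{eq:22}, and you have simply made explicit the renewal-reward structure and the standard SLLN/FCLT arguments that the paper leaves implicit (as it did already for Corollary~\ref{cor:Kuczek}). Two minor remarks: Theorem~\ref{thm:kuczek3} actually gives the segments i.i.d.\ for $k\ge0$, not just $k\ge1$; and your citation of~\eqref{eq:13} for the bound $|Y_j|\le L_j$ should strictly refer to the three-state skip-free property $\eta_{n+1,x}\subseteq\{-1,1\}$ together with the parity constraint on infected sites, but the conclusion is correct.
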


Similar behaviour holds for the left-endpoint process for any process
$\{X_n\}_{n\ge0}$ whose initial state $X_0$ is such that $X_0(x_0)=1$
for some $x_0$ and $X_0(x)=-1$ for all $x<x_0$.  Thus, finally, for
any process $\{X_n\}_{n\ge0}$ whose initial state $X_0$ is such that
$X_0(x)=-1$ for all $x$ outside some finite interval, we may deduce
the behaviour, conditional on its survival, of both its left and right
endpoints.  In particular, conditional on its survival, the growth
rate of the process is given by $2\mu$, where $\mu$ is as given by
Corollary~\ref{cor:kuczek3}.

\section{Infinite-bin models}
\label{sec:infinite-bin-model-1}

In this Section, we consider a discrete-space infinite-bin model 
and its continuous-space analogue. 

In the discrete setting, we review
the basic model introduced and studied in \cite{FoKonst} 
(see also \cite{DFKonst, FMS}). We recall a stability result from
\cite{FoKonst} (see Proposition
\ref{ThBin1} below), with a new proof, 
and provide a new generalisation (see Theorem
\ref{ThNew1}). We show that the  both results may be considered as applications
of the techniques developed in Section
\ref{sec:cond-infin-future}.

Then we introduce a new continuous-space model and prove a new stability 
result there (see Theorem \ref{thm:cont1}), by applying again the methodology 
from Section
\ref{sec:cond-infin-future}.

\subsection{Discrete-space infinite-bin model}
\label{sec:DIBModel}

\subsubsection{Basic model}
\label{sec:BasicM}

Consider an infinite number of bins arranged on the line and indexed,
say, by the non-positive integers. Each bin can contain an unlimited
number of particles (we assume it to be finite for the moment). A configuration
is either a finite-dimensional vector
$
x= (x_{-l},\ldots,x_0)
$
where $x_{i}$ is the number of particles in bin $i$,
or an infinite-dimensional vector
$
x= (\ldots, x_{-l}, \ldots, x_0 ).
$

The indexing by non-positive integers is convenient because
we are interested in the asymptotic behaviour of a finite number of right-most
coordinates of vectors representing a stochastic recursion.
At each integer step, precisely
one particle---the \emph{active} particle---of the current
configuration is chosen according to some
rule (to be given below). If the particle is in
bin $-i\le -1$, then a new particle is created and placed in bin $-i+1$. Otherwise, if the chosen particle
is in bin $0$ then a new bin is created to hold the ``child particle'' and
a relabelling of the bins occurs: the existing bins are shifted
by one place to the left (are are re-indexed) and the new bin is given the
label $0$.

To be more precise, define the configuration space ${\cal X}$
as the set of all infinite-dimensional vectors
$x= (\ldots, x_{-2},x_{-1},x_0)$ with non-negative integer-valued
coordinates, which have the following property: if $x_{-l}>0$,
then $x_{-l+1}>0$.  In other words, either all the coordinated
of a configuration vector are strictly positive or there is only
a finite number of non-zero coordinates, say $l+1$ -- then these are
coordinates $x_{-l},x_{-l+1}, \ldots, x_0$.
We endow ${\cal X}$ with the natural topology
of pointwise convergence, and let ${\cal B_X}$ be the corresponding class
of Borel sets generated by this topology.

The {\it extent} of an $x\in {\cal X}$ is defined as
$|x|=l$, if there is $l+1$ non-zero coordinates,
$x= (\ldots, 0,0,x_{-l},\ldots,x_0)$,
with the ${\cal L}_1$ {\it norm}
$$
||x|| = \sum_{j=0}^l x_{-j},
$$
and if all the coordinates of $x$ are positive, we set
$|x|=||x||= +\infty$.

Let ${\N}$ be the set of positive integers.
The dynamics of the model may be defined using the map
$f: {\cal X}\times {\N} \to {\cal X}$ where
\begin{eqnarray*}
f(x,\xi ) &=& [x,1] \quad \mbox{if} \quad \xi\le x_0,\\
&=& x+e_{-k} \quad \mbox{if} \quad
\sum_{j=0}^kx_{-j} < \xi \le \sum_{j=0}^{k+1} x_{-j}, \ 0\le k < |x|,\\
&=&x+e_{-|x|}, \quad \mbox{if} \quad \xi > ||x||.
\end{eqnarray*}
Here $[x,1]$ is a concatenation of the vector $x$ with $1$, i.e.\ if
$x=(\ldots, x_{-l}, \ldots, x_0)$, then
$[x,1]=(\ldots, y_{-l-1},y_{-l},\ldots,y_{-1},y_0)$
where $y_0=1$ and $y_{-j-1}=x_{-j}$, for $j\ge 0$. Further,
$e_{-j}$ is the infinite unit vector whose $(-j)$th coordinate is $1$
with all other coordinates equal to $0$. 
Then, given an
i.i.d.\
sequence $\{\xi_n\}_{n\in {\Z_+}}$ of
${\N}$-valued random variables and an ${\cal X}$-valued random variable
$X_0 = (\ldots, X_{-k,0},X_{-k+1,0},\ldots,X_{-1,0},X_{0,0})$,
we define a stochastic recursion by
\begin{equation}\label{bin1}
X_{n+1}=f(X_n,\xi_{n+1}), \quad n\ge 0
\end{equation}
where $X_n = (\ldots, X_{-k,n},X_{-k+1,n},\ldots,X_{-1,n},X_{0,n})$.

Verbally the dynamics may be explained as follows. Each time $n$ we number
again
the existing particles from the rightmost bin to the leftmost (so, if
$X_n$ takes value $x=(\ldots,0,x_{-l},\ldots, x_0)$, then the
particles in the rightmost bin are numbered $1$ to $x_0$,
in the next bin they are numbered $x_0+1$ to $x_0+x_{-1}$, and so
on). Then the random variable $\xi_n$ is the
number of the active particle defined in the earlier description.

Fix a non-negative integer $k$ and let  $X_n(-k)$ be the $(k+1)$-dimensional projection of $X_n$,
$$
X_n(-k) = (X_{-k,n},X_{-k+1,n},\ldots ,X_{0,n}).
$$
The following result may be found in \cite{FoKonst}.

\begin{proposition}\label{ThBin1}
Assume that $\{\xi_n\}_{n\in\Z_+}$ is an i.i.d.\ sequence.  Assume
also that
$\Pr(\xi_i =1)>0$ and ${\E} \xi_i < \infty$.
Then, 
for any integer $k\ge 0$,  $X_n(-k)$ converges to a proper limiting random vector in the total variation
norm. Therefore, $X_n$ weakly converges to its proper limit.
\end{proposition}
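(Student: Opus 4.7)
The plan is to define a ``future event'' $F_n$ in terms of $\xi_{n+1},\xi_{n+2},\dots$, apply Corollary~\ref{cor:1} to obtain an i.i.d.\ cycle structure for $\{\xi_n\}$ between the break times $\tau_j$, and then deduce total-variation convergence of the projection $X_n(-k)$ from Corollary~\ref{cor1}. The natural choice is
\[
  F_n = \{\xi_{n+1}=1\} \cap \bigcap_{j\ge 2}\{\xi_{n+j}\le j-1\}.
\]
Conditions~(F1), (F2) of Section~\ref{sec:cond-infin-future} are immediate, and $\Pr(F_n)=\Pr(\xi=1)\prod_{j\ge 2}\Pr(\xi\le j-1)$ is strictly positive precisely under the two standing hypotheses, since $\E\xi<\infty$ is equivalent to $\sum_{j\ge 1}\Pr(\xi\ge j)<\infty$. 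Setting $A_n=F_n$, the monotonicity condition~\eqref{eq:41} of Corollary~\ref{cor:1} follows from the identity $F_n\cap F_{n+m}=E'_{n,n+m}\cap F_{n+m}$ with $E'_{n,n+m}=\{\xi_{n+1}=1\}\cap\bigcap_{j=2}^{m}\{\xi_{n+j}\le j-1\}\in\si_{n+1,n+m}$, because on the future of $n+m$ the tail constraints imposed by $F_n$ are strictly weaker than those imposed by $F_{n+m}$. Thus the successive break times $\tau_j$ cut $\{\xi_n\}$ into i.i.d.\ cycles; aperiodicity of $\tau_{j+1}-\tau_j$ is immediate since $\Pr(\tau_{j+1}-\tau_j=1)>0$, which again requires only $\Pr(\xi=1)>0$ and $\E\xi<\infty$.

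To link this structure to $\{X_n\}$ I would introduce, for each $n$, the ``fresh'' process $\{Y^{(n)}_m\}_{m\ge 1}$ obtained by setting $Y^{(n)}_1=(\dots,0,0,1)$ at the step $\xi_{n+1}=1$ and propagating by $Y^{(n)}_{m+1}=f(Y^{(n)}_m,\xi_{n+m+1})$. On $F_n$ one has $\xi_{n+m+1}\le m=\|Y^{(n)}_m\|$ for every $m\ge 1$, so the fresh recursion never invokes the third case of $f$. A straightforward induction in $m$, using that on $F_n$ the active particle selected by $\xi_{n+m+1}$ is always one of the rightmost $\|Y^{(n)}_m\|$ particles of $X_{n+m}$ and hence (inductively) a particle of $Y^{(n)}_m$, yields the coupling identity that the last $|Y^{(n)}_m|+1$ coordinates of $Y^{(n)}_m$ and of $X_{n+m}$ agree. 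Since $|Y^{(n)}_m|\to\infty$ a.s.\ under $F_n$, for any fixed $k$ there is a random but a.s.\ finite index $M_k:=\min\{m:|Y^{(n)}_m|\ge k\}$ after which $X_{n+m}(-k)$ is a measurable function of $\xi_{n+1},\dots,\xi_{n+m}$ alone.

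To conclude I would apply Corollary~\ref{cor1} with $R_i(X_{n+i},X_n)=X_{n+i}(-k)$ for $i\ge M_k$, extending $R_i$ on the short prefix $i<M_k$ by any convenient function of $\xi_{n+1},\dots,\xi_{n+i}$; this verifies condition~\eqref{sce21} of Theorem~\ref{TH4}(a) and yields convergence of $X_n(-k)$ in total variation. Weak convergence of the full infinite-dimensional vector $X_n$ then follows by letting $k\to\infty$, since the topology on $\mathcal{X}$ is that of pointwise convergence. I expect the main technical subtlety to lie in the clean treatment of the warm-up prefix $i<M_k$, where $X_{n+i}(-k)$ still depends on the pre-$\tau_j$ configuration: one either enlarges $F_n$ to additionally require $\xi_{n+2}=\dots=\xi_{n+k+1}=1$ (still a positive-probability stationary event, with the same monotonicity argument going through) so that $M_k$ becomes deterministic, or one argues directly that this prefix contributes negligibly in total variation via the a.s.\ finiteness of $M_k$ and the geometric tail of the cycle length.
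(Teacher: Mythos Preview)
Your approach for $k=0$ is essentially the paper's: the paper uses $F_n=\bigcap_{i\ge1}\{\xi_{n+i}\le i\}$, which differs from yours only by a harmless shift in the bounds, and then applies Corollary~\ref{cor:1} and Corollary~\ref{cor1} exactly as you do.

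For general $k$, however, your handling of the prefix has a genuine gap, and neither of your two proposed fixes works as stated. In option~1, enlarging $F_n$ to force $\xi_{n+2}=\dots=\xi_{n+k+1}=1$ does make $M_k=k+1$ deterministic, but it does not change the fact that $X_{\tau_j+i}(-k)$ for $1\le i\le k$ still depends on $X_{\tau_j}$. Redefining $R_i$ there means Corollary~\ref{cor1} yields total-variation convergence only of the auxiliary sequence $Z_n$, which differs from $X_n(-k)$ on the event $\{n-\tau_{L(n)}\le k\}$; by elementary renewal theory this event has \emph{strictly positive} limiting probability, so you cannot pass from $Z_n$ to $X_n(-k)$. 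For the same reason option~2 (``the prefix contributes negligibly in total variation'') fails outright: the prefix is not negligible.

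The paper's remedy is exactly your option~1 with the extra block of $1$'s moved from the future to the \emph{past}. One introduces the past event $H_n=\bigcap_{1\le i\le k}\{\xi_{n+1-i}=1\}\in\si_n$ alongside the future event $F_n$, and builds the break events from $B_n=H_n\cap F_n$. On $H_n$ the last $k$ bins already satisfy $X_{0,n}=\dots=X_{-k+1,n}=1$; combined with $\xi_{n+1}=1$ from $F_n$ this forces $X_{n+1}(-k)=(1,\dots,1)$, and then your coupling argument shows condition~\eqref{sce21} holds for \emph{every} $i\ge 1$ with $R_i(X_{n+i},X_n)=X_{n+i}(-k)$. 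One then checks the monotonicity conditions of Corollary~\ref{cor:2} (rather than Corollary~\ref{cor:1}), and Corollary~\ref{cor1} delivers total-variation convergence of $X_n(-k)$ directly, with no prefix to excise. In short, the missing idea is to encode the warm-up run of $1$'s as a past event $H_n$ rather than as additional future constraints inside $F_n$.
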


Based on the theory from Section \ref{sec:cond-infin-future}, one can provide a short
alternative proof of Proposition \ref{ThBin1}. 
We start with the simplest case $k=0$.
In this case, the proof is based on Corollaries \ref{cor:1} and
\ref{cor1}. 

In order to avoid trivialities, assume $\Pr (\xi_i =1) <1$.

Let the functions $R_i$ of Section~\ref{sec:cond-infin-future} be
given by $R_i(X_{n+i},X_n)=X_{0,n+i}$. Further, let
\begin{equation}\label{eq:Fni}
F_n =
\bigcap_{i\ge 1} \{\xi_{n+i}\le i\} \equiv \bigcap_{i\ge 1} F_{n,i}.
\end{equation}
Note that, since $\E\xi_i<\infty$, the events $F_n$ have strictly
positive probability, and indeed satisfy the conditions (F1) and (F2)
of Section~\ref{sec:cond-infin-future}.  Define now, for each $n$, the
event $A_n=F_n$.  Clearly
$$
F_n \cap F_{n+m} = \bigcap_{i=1}^m F_{n,i} \cap F_{n+m},
$$
and so the condition \eqref{eq:41} of Corollary \ref{cor:1}
holds. Further, the condition
\eqref{sce21} of Theorem \ref{TH4} holds because, given the event $F_n$, the future
process of placing particles is the same for all histories up to time $n$.

Finally, the aperiodicity condition of Corollary \ref{cor1} follows since
\begin{eqnarray*}
 \Pr (\tau_{n+1}-\tau_n=1) &=&
\sum_l \Pr (\tau_n=l,\tau_{l+1}=l+1) \\
&=& \sum_l \Pr (\tau_n=l) \Pr (F_{l+1} \ | \ F_l) = \Pr (F_1 \ | \ F_0)\\
&=& \prod_{i=2}^{\infty} \Pr (\xi_i\le i-1 \ | \ \xi_i \le i) 
\ge   \prod_{i=2}^{\infty} \Pr (\xi_i\le i-1) > 0.
\end{eqnarray*}
The proof of Proposition \ref{ThBin1} now follows in the case $k=0$
from Corollaries~\ref{cor:1} and \ref{cor1}.

For the
proof for general $k>0$, we need events of the form $B_n=H_n\cap F_n$
where the events $F_n$ are again as given by \eqref{eq:Fni}
and
$$
H_n =\bigcap_{1\le i\le k} \{\xi_{n+1-i}=1\}. 
$$
We may observe that, given $H_n$, we have $X_n(-k) = (1,1,\ldots,1)$.

Now we define the events $A_n$ as follows: 
$A_n=\emptyset$, for $n<2k$ and, for $n\ge 2k$,
$$
A_n =\bigcap_{i=1}^{k-1} B_{n-i}^c \cap B_n
$$
which may be represented as $A_n = E_{n-k,n}\cap F_n$, for a stationary
sequence $E_{n-k,n}\in\si_{n-k+1,n}$. Finally, one may take 
$R_i(X_{n+i},X_n) = X_{n+i}(-k)$. 
Then all conditions of Theorem \ref{TH4} are satisfied, and the result
again follows from Corollary~\ref{cor1}, on noting that 
once more aperiodicity follows from the condition ${\Pr}(\xi_i =1)>0$.

\begin{remark}
This model has close links to the model from \cite{Comets},
see \cite{FoKonst} for more detail.
\end{remark}

\subsubsection{Extension of the basic model}

Consider the infinite bin model introduced in the Section~\ref{sec:BasicM} and
let $p_i = {\Pr} (\xi =i)$. One of the main conditions in 
Proposition  \ref{ThBin1} is that $p_1>0$. We assume now that this condition is
violated and that instead the following condition
holds: there exist two positive
integers $1<i_1<i_2$ such that
\begin{equation}\label{twopositive}
p_{i_1}>0 \quad \mbox{and} \quad p_{i_2}>0.
\end{equation}
Then the following statement holds.

\begin{theorem}\label{ThNew1}
Assume that $\{\xi_n\}_{n\in\Z_+}$ is an i.i.d.\ sequence with a
common finite mean
${\E} \xi_i$. Assume also that $p_1=0$ and that the condition \eqref{twopositive} holds.
Assume further that the numbers $i_1$ and $i_2$ are mutually prime.  
Then, 
for any integer $k\ge 0$,  $X_n(-k)$ converges to a proper limiting random vector in the total variation
norm. Therefore, $X_n$ weakly converges to its proper limit.
\end{theorem}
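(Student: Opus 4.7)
The plan is to adapt the alternative proof of Proposition~\ref{ThBin1} by modifying both the future event $F_n$ and the past event $H_n$, since when $p_1=0$ both the natural choice $F_n=\bigcap_{i\ge 1}\{\xi_{n+i}\le i\}$ and the natural past event $\bigcap_{1\le i\le k}\{\xi_{n+1-i}=1\}$ have probability zero.

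For the future event I would enlarge it to $F_n=\bigcap_{i\ge 1}\{\xi_{n+i}\le i+c\}$ for a fixed constant $c$ large enough that $\Pr(F_n)>0$; existence of such $c$ follows from $\E\xi<\infty$ via $\sum_i\Pr(\xi>i+c)<\infty$. The monotonicity relation $F_n\cap F_{n+m}=\bigcap_{i=1}^m\{\xi_{n+i}\le i+c\}\cap F_{n+m}$ still holds by the same telescoping as in the original, so (F1), (F2) and the structural hypothesis of Corollary~\ref{cor:1} are preserved. For the past event I would first fix a reference configuration $\bar x=(\bar x_{-k},\dots,\bar x_0)$ with $\bar x_0\ge c+1$ and then exploit the coprimality of $i_1$ and $i_2$ to build a deterministic finite word $(s_1,\dots,s_L)$ in the letters $i_1,i_2$ which, when fed to the dynamics starting from any admissible configuration, produces top $k+1$ coordinates equal to $\bar x$; setting $H_n=\{(\xi_{n-L+1},\dots,\xi_n)=(s_1,\dots,s_L)\}$ then gives a stationary $\si_n$-measurable event of positive probability, and $A_n:=H_n\cap F_n$ is then a stationary ergodic sequence with $\Pr(A_n)>0$, hence occurring infinitely often almost surely.

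The bookkeeping claim at the heart of the argument is this: whenever $A_n$ occurs, at every time $n+i$ the active particle lies either in a bin created after time $n$ or in what at time $n$ was bin~$0$, so the new particle is placed in the new region and no bin strictly to the left of old bin~$0$ is ever touched. This is proved by induction on $i$: assuming the property through time $n+i-1$, the cumulative count up to and including old bin~$0$ at time $n+i-1$ equals $\bar x_0+(i-1)\ge (c+1)+(i-1)=i+c\ge \xi_{n+i}$, which forces the active particle into that range. Consequently $X_{n+i}(-k)$ is a deterministic function of $\bar x$ and of $(\xi_{n+1},\dots,\xi_{n+i})$. Taking $R_i(X_{n+i},X_n)=X_{n+i}(-k)$ gives condition~\eqref{sce21} of Theorem~\ref{TH4}(a); the monotonicity conditions~\eqref{eq:1} and~\eqref{eq:43} of Corollary~\ref{cor:2} follow with $E'_{n,n+m}=\bigcap_{i=1}^m\{\xi_{n+i}\le i+c\}$ and with $E''_{n,n+m}$ the event that the deterministic trajectory driven by $\xi_{n+1},\dots,\xi_{n+m}$ from $\bar x$ returns to $\bar x$. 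Aperiodicity of $\tau_1-\tau_0$ then follows from coprimality: by inserting extra $i_1$- or $i_2$-blocks in the reset word one produces return trajectories to $\bar x$ whose lengths differ by $i_1$ and by $i_2$, and B\'ezout's identity forces $\mathrm{GCD}=1$. Corollary~\ref{cor1} then delivers the stated total-variation convergence of $X_n(-k)$.

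The main obstacle I expect is the construction of the absorbing word $(s_1,\dots,s_L)$: showing that one can, using only the letters $i_1$ and $i_2$ and exploiting their coprimality, drive the top $k+1$ coordinates to a common $\bar x$ \emph{irrespective of the starting configuration}. The natural attempt is to feed many copies of $i_1$ in order to force at least $k+1$ concatenations (each of which overwrites one of the top bins, erasing the corresponding piece of the past), and then to use $i_2$'s at well-chosen positions to set the phase of the $X_{0,\cdot}$-counter---this latter step is necessary because when $p_1=0$ concatenations are separated by roughly $i_1$ additions rather than occurring at consecutive times. Verifying that a single fixed-length word always produces the prescribed final top configuration, independently of the admissible starting state, will require a careful case analysis of the transient behaviour during the first few concatenations; this is the only step where I expect non-routine work.
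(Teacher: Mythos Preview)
Your approach is essentially the paper's: it takes $c=i_1-1$ and $\bar x=(i_1,\dots,i_1)$, and builds the absorbing word by first invoking a lemma (cited from \cite{ChRa}) that produces a finite word in $\{i_1,i_2\}$ after which bin~$0$ holds at least $i_1$ particles irrespective of the starting configuration, and then appending $i_1(k+1)$ copies of the letter $i_1$---this resolves exactly the obstacle you flagged. One minor correction: your description of $E''_{n,n+m}$ should be the event that $(\xi_{n+1},\dots,\xi_{n+m})$ is compatible with $H_{n+m}$ given the values already fixed by $H_n$ (a purely $\xi$-level condition), not the weaker dynamical event that the trajectory from $\bar x$ returns to $\bar x$; and for aperiodicity the paper simply observes that $\Pr(A_n\cap A_{n'})>0$ for all sufficiently large $n'-n$, which is more direct than your B\'ezout argument.
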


The proof of Theorem~\ref{ThNew1} will be based on the following
simple observation (see, e.g., \cite{ChRa}).

\begin{lemma}\label{chra}
For any two integers $1<i_1<i_2$, there exist a positive integer $m$
and a sequence of integers $j_1,j_2,\ldots,j_{m-1} \in \{i_1,i_2\}$
such that, for any $n>m$
and for any vector $X_{n-m}$ as in Section~\ref{sec:BasicM},
we have
$$
X_{n,0} {\I}_{B_n} \ge i_1 {\I}_{B_n} \quad \mbox{a.s.}
$$
where the events $B_n$ are defined as
\begin{equation}
  \label{eq:11}
  B_n= \{\xi_{n}=i_2\} \cap \bigcap_{l=1}^{m-1} \{\xi_{n-m+l}=j_l\}
  \cap \{\xi_{n-m}=i_2\}.
\end{equation}
 \end{lemma}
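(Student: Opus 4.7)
The lemma asks for a purely deterministic statement: a word $(j_1, \ldots, j_{m-1}, i_2)$ in the alphabet $\{i_1, i_2\}$ should, when applied as the driving sequence from any starting configuration $X_{n-m}$, force $X_{n,0} \ge i_1$. My plan is to construct such a word in two concatenated blocks -- a normalising prefix consisting only of $i_2$'s, followed by a synchronising suffix mixing $i_1$'s and $i_2$'s -- and to appeal to the observation from \cite{ChRa} (the Frobenius / ``coin problem'' representation) that coprimality of $i_1$ and $i_2$ implies every sufficiently large integer is a nonnegative combination $a i_1 + b i_2$.

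For the normalising prefix I would analyse the single-step action of $\xi = i_2$ on a general configuration $x$ through the cumulative sums $S_k(x) = \sum_{j=0}^k x_{-j}$: either $x_0 \ge i_2$ and bin $0$ is reset to $1$ (with the old $x_0$ shifted into bin $-1$), or $x_0 < i_2 \le x_0 + x_{-1}$ and $x_0$ is incremented by $1$, or a particle is added to some bin strictly left of $-1$. In all three cases the new $y_0 \le i_2$, and in the third case the critical index $k^*(x) := \min\{k: S_k(x) \ge i_2\}$ can only weakly decrease under further $\xi = i_2$'s, strictly decreasing whenever $S_{k^*-1}$ reaches $i_2$. Iterating, after at most $O(i_2^2)$ consecutive $\xi = i_2$'s the state enters a finite ``normalised'' regime where $x_0 \in \{1, \ldots, i_2\}$ and $x_0 + x_{-1} \ge i_2$, independently of the initial $X_{n-m}$.

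In the normalised regime the action of any further $\xi = i_j$ on the pair $(x_0, x_{-1})$ is either a $+1$ increment of $x_0$ (when $x_0 < i_j$ and the relevant cumulative sum is large enough) or a reset to $(1, x_0)$ (when $x_0 \ge i_j$). I would then choose the suffix as an interleaving of $\xi = i_1$ and $\xi = i_2$ steps that simultaneously drives every possible $x_0 \in \{1, \ldots, i_2\}$ to the common value $i_1 - 1$ while keeping $x_{-1}$ large. The terminal $\xi_n = i_2$ then increments $x_0$ to exactly $i_1$ via the ``increment'' case, yielding $X_{n, 0} = i_1$. Setting $m$ equal to the total length of the two blocks gives the word claimed by the lemma.

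The main obstacle is this last synchronisation step: a \emph{single} fixed word must work simultaneously for every state left by the prefix. This is a classical synchronising-word problem on a finite deterministic automaton, and the role of $\gcd(i_1, i_2) = 1$ is precisely to make it solvable. The Frobenius representation from \cite{ChRa} provides the required combinatorial input: without coprimality, reachable values of $x_0$ under any fixed word would be confined to a proper arithmetic progression in $\{1, \ldots, i_2\}$, so no uniform suffix could drive them all to a common target.
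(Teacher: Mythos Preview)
The paper does not actually supply its own proof of this lemma: it is stated as a ``simple observation'' and attributed to the working paper \cite{ChRa}. So there is nothing to compare your argument against directly, and the question is whether your sketch stands on its own.

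Your two-block architecture (normalising prefix, then synchronising suffix) is a sound plan, and your analysis of the prefix is correct: iterated $\xi=i_2$ drives the critical index $k^*$ down until $x_0+x_{-1}\ge i_2$ and $x_0\in\{1,\dots,i_2\}$, in a number of steps bounded uniformly in the initial configuration. The difficulties lie entirely in the suffix, and here there are two genuine gaps.

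First, the lemma as stated imposes \emph{no} coprimality hypothesis on $i_1,i_2$; that assumption appears only in Theorem~\ref{ThNew1}. Your argument explicitly invokes $\gcd(i_1,i_2)=1$, so at best you are proving a weaker statement. In fact, if one models the action on $x_0$ alone as a deterministic automaton on $\{1,\dots,i_2\}$ with letters $i_1$ (sending $k\mapsto k+1$ for $k<i_1$ and $k\mapsto 1$ for $k\ge i_1$) and $i_2$ (the cyclic shift), then a synchronising word exists for \emph{every} pair $i_1<i_2$: each application of the $i_1$-letter, preceded by a suitable power of the cyclic shift, strictly shrinks an interval of reachable $x_0$-values by $i_2-i_1$, so one reaches a singleton in finitely many rounds. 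No Frobenius representation is needed, and your appeal to it is not the right mechanism.

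Second, and more seriously, the reduction to that $x_0$-only automaton is exactly where the real work is, and you acknowledge but do not resolve it. After an $i_1$-step creates a new bin from a state with $x_0=i_1$, the new pair is $(x_0,x_{-1})=(1,i_1)$, so $x_0+x_{-1}=i_1+1$, which may be strictly less than $i_2$. The next $\xi=i_2$ then does \emph{not} increment $x_0$ but instead deposits a particle in bin $-1$ (or deeper), and the clean automaton picture breaks. Your phrase ``while keeping $x_{-1}$ large'' names the obstacle without overcoming it: you need an explicit interleaving that both shrinks the reachable $x_0$-set and restores $x_0+x_{-1}\ge i_2$ before each subsequent $i_2$-step, and you have not exhibited one. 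This is the step a complete proof must supply.
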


\begin{proof}[Proof of Theorem~\ref{ThNew1}]

By the conditions of the theorem, the stationary events $B_n$ defined
by~\eqref{eq:11} have a positive
probability.

Let $r= j_1(k+1)$. 
For $n\le r$, we let $A_n=\emptyset$. 
For $n>r$, let $A_n=H_n\cap F_n$, with
$$
H_n = B_{n-r}\cap D_n \quad \mbox{where} \quad
D_n= \bigcap_{1\le l \le r}\{\xi_{n+1-l}=i_1\},
$$
% $B_{n-r}$ is as given by~\eqref{eq:11},
and where  
\begin{equation}\label{Fn12}
F_n = \bigcap_{l\ge 1} \{\xi_{n+l}\le i_1 + l-1\}.
\end{equation}
Clearly, for $n>r$, given the occurrence of the event $H_n$, all the
coordinates of the vector
$X_n(-k)$ are equal to $i_1$. Thus, given the event $A_n$, the placings of 
the particles numbered $n+1,n+2,\ldots$ do not depend on the left tail of
the vector $X_n$ 
nor on the past values of the vector $X_j$, $j<n$.

One can check directly that both the conditions \eqref{eq:1}
and \eqref{eq:43} are satisfied.

We may now define the functions~$R_i$ of
Section~\ref{sec:cond-infin-future} by $R_i(X_{n+i},X_n)=X_{n+i}(-k)$.
Then the condition \eqref{sce21} holds which implies the conclusion
\eqref{eq:re} of part (a) 
of Theorem \ref{TH4}. 

Observe further that $B_{n+l}\cap D_n  =\emptyset$, for
any $1\le l \le r$. Therefore, $A_n\cap A_{n'}=\emptyset$ for all $n<n'$ with
$n'-n\le r+i_2-i_1$ and ${\mathbf P} (A_n\cap A_{n'})>0$ if
$n'-n>r+i_2-i_1$. The latter implies the aperiodicity condition of 
Corollary~\ref{cor1}, 
and the required result follows.
\end{proof}

\subsection{Continuous-space model with varying link lengths}
\label{sec:cont-space-model}

In this section, we introduce and study
a new model which is a continuous-space extension of the infinite-bin
model, and which has applications in, for example, queueing theory.
As described below, it may be viewed as a model for the locations of
points on the negative real line, in which at each successive time
precisely one
of these points gives birth to a further point, and in which it is
convenient to associate a \emph{link} between this child point and its
parent.  We thus think of it as a random links model.
Once again, our aim is to study the asymptotic behaviour of this
model as ``seen from the right''.

Before introducing the new model, we remark that the
basic model of
Section \ref{sec:BasicM} may
be described slightly differently. Namely, we may assume that, at each time $n$,
particle number $-j$ may be active with some probability, say $p(-j)$.
Each active particle proposes to put a new particle in the bin next to
its own
(in other words, at distance 1 to the right),
and the rightmost active particle wins. If particles become active
independently, then this description coincides with the description proposed
earlier if we let ${\Pr} (\xi > j) = \prod_{i=1}^j (1-p(-i))$.

Now assume, for simplicity, that all the $p(-j)$ are equal and
introduce the following  
continuous-space extension of the model, in which the positions of
particles are real-valued:
at time $n$, each active particle (say particle $-j$) proposes a location
for the new particle at a random
distance, $l_{n,-j}$ to the right of particle $-j$ (here
the $l_{n,-j}$ needs not be integer),
and the rightmost proposed location (say that proposed by particle
$-j_0$) wins. Then we say there is a link of length
$l_{n,-j_0}$ from particle $-j_0$
to the new particle.

\begin{remark} One may view this model as a model of a system with
infinitely many servers and with random constraints.
There is an infinite queue in front;
each successive client $n$ is allocated to a free server, but the
start of its service is delayed by the maximum of times $l_{n,-j}$
of all previous clients that are active, see e.g.\ \cite{FoKonst}
and the references therein for further comments.
\end{remark} 
 
\begin{remark} One can consider various natural generalisations of
  this setting where 
 the same methodology may be easily applied. For example, we 
may assume  that, at any time, the first $K\ge 0$ particles cannot be
active and that all the others
become active independently and either with equal probabilities $p\in (0,1]$
or with varying probabilities.
\end{remark}

Here is a formal description of the model.
Let ${\cal X}$ be the space of left-infinite vectors of the
form ${\bf x} = (\ldots , x_{-k},x_{-k+1},\ldots,x_{-1},x_0)$
where $x_0=0$ and $x_{-k}\le x_{-k+1}$, for all $k\ge 1$. We also
assume that either all the coordinates of $\bf{x}$ are finite or
some of them may be equal to $-\infty$. In the latter
case, due to the monotonicity, there will be only finite number of
finite coordinates, say,
$x_{-j}=-\infty$ for all $j> k$ and $x_{-j}>-\infty$ for all $j\le k$,
for some $k=0,1,\ldots$. Then we write for short
${\bf x} = (x_{-k},\ldots,x_0)$. We denote by ${\cal X}_0$ the space of
finite-dimensional vectors (which may be viewed as a subspace of ${\cal X}$).

Let ${\cal L}$ be the space of infinite sequences
${\bf l}=(\ldots,l_{-k}.l_{-k+1},\ldots,l_0)$ consisting of non-negative
real-valued elements, and let ${\cal Q}$ the space of
infinite sequences of the form
${\bf q}=(\ldots, q_{-k},q_{-k+1},\ldots,q_0)$ where each $q_{-k}\in \{0,1\}$.

Introduce the function
$$
f: {\cal X}_0 \times {\cal L}\times {\cal Q} \rightarrow {\cal X}_0
$$
using the following rule.
For ${\bf x}=(x_{-k},\ldots,x_0)$, let
$$
h:= h({\bf x,l,q})= \max_{i: q_{-i}=1} (x_{-i}+l_{-i})
$$
and
$$
h:=h({\bf x,l,q})= x_{-k}
$$
if $q_{-i}=0$, for all $0\le i \le k$.

If $h\le 0$ and, say, $x_{-j}\le h \le x_{-j+1}$, for some $j$, then
$$
f({\bf x,l,q}) =(x_{-k},\ldots,x_{-j},h,x_{-j+1},\ldots,x_0)
$$
and if $h>0$, then
$$
h({\bf x,l,q}) = (x_{-k}-h, x_{-k+1}-h, \ldots, x_0-h, 0).
$$
In other words, if $h\le 0$, we add an extra coordinate $h$, and
if $h>0$, we again add the coordinate and then subtract $h$ from all
coordinates of the new vector.

\begin{remark}
An equivalent way to describe the dynamics is to use point processes.
Instead of considering vectors, we may consider finite ordered sequences
of points, with the right-most point at $0$.
\end{remark}

Now we introduce stochastic assumptions. 
Let $\{{\bf l}_n\}$ and
$\{{\bf q}_n\}$
be two i.i.d.\ 
sequences of vectors that do not depend on each other.
Assume also that each ${\bf l}_n$ and each ${\bf q}_n$
consists of i.i.d.\ random variables, $l_{n,j}$ and $q_{n,j}$.
Let $q= {\mathbf P} (q_{n,j}=0)=1-p$ with $p={\mathbf P} (q_{n,j}=1)$.
Assume that ${\mathbf P} (l_{0,0}> 0)=1$ 
and that 
\begin{equation}\label{secondm}
{\bf E} \left(l_{0,0}\right)^2 < \infty,
\end{equation}
and let
$$
a= {\mathbf E} l_{0,0}.
$$

Recall that vectors $X_n$ always have infinitely many coordinates. 
Our model is now defined by starting  
from a fixed vector $X_0\in {\cal X}_0$  
and running the a stochastic
recursion
\begin{equation}
  \label{eq:12}
  X_{n+1}= f(X_n,{\bf l}_n,{\bf q}_{n}).
\end{equation}

Our aim is now to establish the following analogue of
Theorem~\ref{ThNew1} for the discrete-space model.

\begin{theorem}\label{thm:cont1}
For any $j\ge 0$, the finite-dimensional projections $(X_{n,-j},\ldots,
X_{n,0})$ of vectors $X_n$ converge to a proper limiting vector in the
total variation norm.
\end{theorem}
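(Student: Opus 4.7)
The approach follows the same template as the proof of Theorem~\ref{ThNew1}. I would construct a sequence of break events $A_n = H_n \cap F_n$ verifying the conditions of Corollary~\ref{cor:2}, then verify the conditions of Theorem~\ref{TH4}(b) for some fixed $m\ge k$, establish aperiodicity, and invoke Corollary~\ref{cor1} to deduce convergence in total variation. The functional is the natural choice $R_i(X_{n+i},X_n) := X_{n+i}(-k)$, which is already ``centred'' at the right endpoint since by construction $X_{n,0}=0$ for every~$n$.

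I would define the past event $H_n$ by requiring that in each of the last $m$ steps prior to time~$n$ the rightmost particle was active and its proposal strictly dominated those of all other active particles (so that a strictly positive shift occurred at each such step). Under $H_n$, each positive shift equals $l_{n-m+s,0}$, and the rightmost $k+1$ coordinates of $X_n$ are explicit partial sums of $l_{n-k+1,0},\ldots,l_{n,0}$, hence measurable functions of $\xi_{n-m+1},\ldots,\xi_n$. The future event $F_n$ would then require, for every $i\ge1$, that the rightmost particle is active at step $n+i$ and that its proposal $l_{n+i,0}$ exceeds the proposals of all other active particles at that step; since $X_{n+i-1,-j}\le 0$ for all $j\ge1$, a sufficient condition is $q_{n+i,0}=1$ together with $l_{n+i,0}>l_{n+i,-j}q_{n+i,-j}$ for all $j\ge 1$. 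I would express $F_n$ as a countable intersection of such events on $(l_{n+i,\cdot},q_{n+i,\cdot})$, and use the finite second moment hypothesis~\eqref{secondm} --- combined with independent thinning by $q$ --- to obtain a summable tail bound of Borel--Cantelli type which guarantees $\Pr(F_n)>0$.

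Taking $A_n=H_n\cap F_n$ and enforcing the minimum separation $\tau_{j+1}-\tau_j\ge m$ as in variant~(c) of Example~\ref{ex:1}, the key condition~\eqref{sce22} becomes immediate: conditional on $A_n$, the rightmost $k+1$ coordinates of $X_{n+i}$ are determined by $l_{n-m+1,0},\ldots,l_{n,0}$ (via $H_n$) together with $\xi_{n+1},\ldots,\xi_{n+i}$ (via $F_n$ and the recursion~\eqref{eq:12}). The monotonicity conditions~\eqref{eq:1} and \eqref{eq:43} of Corollary~\ref{cor:2} reduce, through the product structure of $H_n\cap F_n$ and the stationarity of the driving sequence $\{({\bf l}_n,{\bf q}_n)\}$, to routine verifications with $E'_{n,n+m}$ describing the partial survival of rightmost dominance during the interval $(n,n+m]$, and with $E''_{n,n+m}$ similarly describing the validity of the last $m$ rightmost-wins between times $n$ and $n+m$. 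Aperiodicity of the cycle lengths, as required by Corollary~\ref{cor1}, follows by exhibiting two positive-probability separations with coprime gaps, which the continuity of the distribution of $l_{0,0}$ readily allows.

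The main obstacle is the careful construction of $F_n$. In contrast to Theorem~\ref{ThNew1}, where the bound $\xi_{n+i}\le i_1+i-1$ cleanly excluded all far-left interference with a single deterministic constraint per step, here one must simultaneously force the rightmost proposal $l_{n+i,0}$ to be large enough to beat a random maximum over a growing set of independent left $l$-values, while preserving $\Pr(F_n)>0$. This balancing act is precisely where the finite second moment~\eqref{secondm} is indispensable: it provides enough tail control on the $l$'s, via say thresholds $c_{i,j}$ growing polynomially in $i+j$ with $\sum_{i,j}\Pr(l>c_{i,j})<\infty$, to make the competing event on $l_{n+i,0}$ feasible for a nontrivial fraction of realisations. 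Once this is in place, the rest of the argument is a direct application of the machinery of Section~\ref{sec:cond-infin-future}.
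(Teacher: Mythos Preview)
Your future event $F_n$ has probability zero, so the argument collapses at the outset. You require, at each step $n+i$, that $q_{n+i,0}=1$ and $l_{n+i,0}>l_{n+i,-j}q_{n+i,-j}$ for all $j\ge1$. But infinitely many of the $q_{n+i,-j}$ equal $1$ (they are i.i.d.\ Bernoulli$(p)$ with $p>0$), and the corresponding $l_{n+i,-j}$ are i.i.d.\ with the same law as $l_{n+i,0}$; hence $\sup_{j:\,q_{n+i,-j}=1}l_{n+i,-j}$ equals the essential supremum of the link-length distribution almost surely, and $l_{n+i,0}$ cannot beat it. Your proposed thresholds $c_{i,j}$ do not rescue this: to replace the naive condition by $l_{n+i,0}>l_{n+i,-j}-c_{i,j}$ you would need to know that the actual position satisfies $X_{n+i-1,-j}\le -c_{i,j}$, and your past event $H_n$---which looks back only $m$ steps---gives no control on $X_{n,-j}$ for $j>m$ beyond a single uniform shift. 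You obtain no decay in $j$, which is exactly what is needed.

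The paper's proof avoids this by a two-layer construction. A first future event $F_n^{(1)}=\bigcap_{j\ge1}\{\nu_{n+j}\le j\}$ depends only on the activation variables and already produces, via Corollary~\ref{cor:1}, a renewal sequence $\{T_k\}$ with light-tailed cycles; between consecutive $T_k$'s the red particles are separated by distances stochastically at least $l_{0,0}$. The past event $H_n$ then looks back over the \emph{entire} renewal history (not just $m$ steps) and demands that all past cycle lengths and inter-red distances stay within $(1\pm\varepsilon)$ of their means---an event of positive probability by the SLLN. This is what forces the positions $X_{n,-j}$ to decay linearly in $j$, with explicit constants $c_{-j}$. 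Only then does a second future event $F_n^{(2)}$, comparing $l_{n+i,\cdot}$ against the linearly growing $c_{-j}$, have positive probability (this is where~\eqref{secondm} enters). The missing idea in your proposal is precisely this bootstrap through an auxiliary renewal structure to obtain linear spatial decay of old particles.
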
 

\begin{proof}
  Let $\nu_n= \min \{i: q_{n,-i}=1\}$. Then $\{\nu_n\}$ is an i.i.d.\
  sequence with a common geometric distribution.  It is convenient to
  us to assume this sequence to be doubly-infinite, $-\infty <n <
  \infty$.

  Analogously to Section \ref{sec:BasicM}, introduce the events
$$
F_n^{(1)}= \bigcap_{j\ge 1} \{\nu_{n+j}\le j \}
$$
and conclude that these events form a stationary ergodic sequence,
each with a strictly positive probability
$$
{\bf P} \left(F_0^{(1)}\right) = \prod_{j\ge 1} (1-q^j) >0,
$$
and, moreover, satisfy the monotonicity condition \eqref{eq:41}.
Thus, by Corollary~\ref{cor:1}, the times
$0 < T_1 < T_2 < \ldots$ of occurrences of the events $F_n^{(1)}$ form
a stationary renewal sequence. We may easily extend this sequence to
stationary renewal sequence $\ldots < T_{-1} < T_0 \le 0 < T_1 < T_2 <
\ldots$ on the whole real line.

Further, the i.i.d.\ cycle lengths $t_k=T_{k+1}-T_k$, $k\ne 0$, have a
light-tailed distribution (i.e. have a finite exponential moment) and,
therefore, a finite positive mean $b = {\mathbf E} t_1$.  Also, the
cycle $T_1-T_0$ has a light-tailed distribution.

%\sz{Does it matter that here $T_1$ is the time of the first future
%  event at or after time $0$, whereas in Sections 2 and 3 it is
%  $\tau_0$?}

Assume for simplicity that the initial vector $X_0$ corresponds to a
single particle at the origin with all the others at $-\infty$.
Number this particle~$0$.  Each subsequent configuration $X_n$ adds
precisely one further, finitely located, particle to the configuration
$X_{n-1}$ (with the existing particles relocated if necessary).
Number this particle~$n$.  Thus particles are numbered in the order of
their creation, and are assumed to keep their numbering for all
subsequent times (including when they are relocated).
Now colour ``red'' all particles numbered $T_1,T_2,\ldots$, i.e.\
those created at the times of occurrence of the events $F_n^{(1)}$;
colour ``green'' the remaining particles.  For each $n$ and for each
$k$ such that $n>T_{k+1}$, consider the relative locations of the
particles in vector $X_n$. The following observations are clear:

\begin{compactitem}
\item{} the red particle $T_k$ is located to the left of the red
  particle $T_{k+1}$, and the distance between them is a random
  variable, say, $d_k$ which is stochastically bigger than the
  ``typical'' link $l_{0,0}$; in particular, ${\mathbf E} d_k \ge
  a>0$;
\item{} all the green particles numbered $T_{k}+1,\ldots,T_{k+1}-1$
  are located between these two red particles;
\item{} the relative locations and, in particular, the distances
  between particles numbered $T_k,\ldots,T_{k+1}$ stay the same, for
  all $n>T_{k+1}$.
\end{compactitem}

Therefore, if $n=T_{k+1}$ for some $k\ge 0$, then the $t_k$ last
coordinates of vector $X_n$ take values between $-d_k$ and $0$, next
(to the left) $t_{k-1}$ coordinates take values between
$-d_{k-1}-d_{k}$ and $-d_k$, $\ldots$, $t_k$ coordinates take values
between $-d_1-d_2-\ldots - d_k$ and $-d_2-\ldots -d_k$, and then $T_1$
coordinates are smaller than $-d_k-\ldots - d_1$ (recall that there
are also infinitely many coordinates equal $-\infty$).  Therefore, the
vector $X_n$ is smaller (coordinate-wise) than the vector, say $Y_n$,
with infinitely many finite coordinates where the last $t_k$
coordinates equal $0$, the next $t_{k-1}$ coordinates equal $-d_{k}$,
$\ldots$, $t_{1}$ coordinates equal $-d_2-\ldots - d_k$, $t_0$
coordinates equal
$-d_k-\ldots - d_1$, $t_{-1}$ coordinates equal $-d_k-\ldots - d_0$,
etc.

Further, we may define vectors $Y_n$ for all $n$ (and not only for
those with $\I_{F^{(1)}_n}=1$) as follows: if $T_k\le n < T_{k+1}$ for
some $k$, we obtain the vector $Y_n$ by concatenating the vector
$Y_{T_k}$ with $n-T_k$ coordinates equal to zero, i.e.\ $Y_n =
(\ldots, Y_{T_k,-1}, 0,0,\ldots,0)$.  Then, clearly, $Y_n\ge X_n$,
coordinate-wise, for all $n\ge 0$.

Since the sequence $\{T_n\}$ is stationary and renewal, the vectors
$\{Y_n\}$ form a stationary ergodic sequence.  For any $n$, let
$T_{n,0}\le n$ be the last time of occurrence of the events
$F^{(1)}_k$ before or at time $n$, $T_{n,-1}<T_{n,0}$ the previous
such time, and so on.  Further, let $d_{n,i}$ be the distance between
particles $T_{n,i-1}$ and $T_{n,i}$ in the vector $Y_n$.  By
stationarity, the random variable $T_{n,0}-n$ has the same
distribution as $T_0$ and the random vectors
$\{(T_{n,i}-T_{n,i-1},\,d_{n,i})\}$, $i\le 0$, do not depend on
$T_{n,0}$ and are i.i.d., with the same distribution as $(t_1,d_1)$.

Let $b_0 = {\mathbf E} |T_{0}|$. For $\varepsilon \in (0,1)$, consider
the following events
\begin{equation}\label{HH}
  H_n = \{n-T_{n,0}\le b_0(1+\varepsilon )
  \cap \bigcap_{i\le 0} \{T_{n,i}-T_{n,i-1}\le b(1+\varepsilon ),\,
  d_{n,i}\ge a(1-\varepsilon )\}.
\end{equation}
These events form a stationary ergodic sequence and, by the strong law
of large numbers, have a positive probability, for any $\varepsilon
>0$. Further, one can see that, for $n'<n$, if the events $H_{n'}$ and
$F_{n'}^{(1)}$ occur, with $T_{n,j}=n'$ for some $j\le 0$, then, for
event $H_{n}$ to occur, it is sufficient for \eqref{HH} to hold only
for $j$ between $i$ and $0$.  Namely,
\begin{equation}\label{HFTE}
  H_{n'}\cap F_{n'}^{(1)}\cap \{T_{n,j}=n'\}\cap H_n
  = 
  H_{n'}\cap F_{n'}^{(1)}\cap \{T_{n,j}=n'\}\cap E_{n',n}
\end{equation}
where the event $E_{n',n}$ belongs to the sigma-algebra
$\sigma_{n',n}$ and does not depend on $j$. Let $\widehat{A}_n=
H_n\cap F_n^{(1)}$.  Taking the union in all $j$ in \eqref{HFTE}, we
obtain condition \eqref{eq:43} with $\widehat{A}_n$ in place of
$A_n$. Condition \eqref{eq:1}, again with $\widehat{A}_n$ in place of
$A_n$, may be verified similarly.

Let constants $c_{-j}$, $j\ge 0$, be defined as follows:
$$
c_{-j} = 0 \quad \mbox{for} \quad 0\le j \le b_0(1+\varepsilon ) +
b(1+\varepsilon )
$$
and, for $r\ge 1$,
$$
c_{-j} = ra(1-\varepsilon ) \quad \mbox{for} \quad (1+\varepsilon
)(b_0+rb) <j \le (1+\varepsilon )(b_0+(r+1)b).
$$
Introduce now a second ``future'' event
\begin{equation}\label{F2}
  F_n^{(2)} =
  \{l_{n+1,0} \ge \sup_{j>0} (l_{n+1,-j}+c_{n,-j})\}
  \cap \{\forall \ \ i>1,
  l_{n+i,\nu_{n+i}} \ge
  \sup_{j>i} (l_{n+1,-j}+c_{n,-j})
\end{equation}
Clearly, for each $n$, the events $H_n$, $F_n^{(1)}$ and $F_n^{(2)}$
are mutually independent.  Further, the events $F_n^{(2)}$ form a
stationary ergodic sequence and, by \eqref{secondm}, have a strictly
positive probability.  The meaning of the event $F_n^{(2)}$ is: all
locations for ``new'' particles (with numbers greater than $n$)
generated by ``old'' particles (with numbers less than $n$)
are relatively small; thus, given the simultaneous occurrence of the
three events $H_n$, $F_n^{(1)}$ and $F^{(2)}_n$, all future links
(starting from time $n$) are established only between particles
numbered $n, n+1,n+2,\ldots$.

Let $F_n=F_n^{(1)}\cap F_n^{(2)}$ and let $A_n=H_n\cap F_n$.  We may
conclude that the stationary sequence of events $\{F_n\}$ satisfy
properties (F1) and (F2).  Further, an extra intersections with events
$F_n^{(2)}$ preserves properties \eqref{eq:43} and \eqref{eq:1}, so
the conclusion of Corollary \ref{cor:2} holds.

Thus the conclusions of Theorem \ref{th:1} holds.  Further, it is easy
to verify aperiodicity for the times $\tau_n$ defined in Theorem
\ref{th:1}, because ${\mathbf P} (\tau_2=1)>0$.  Then we may take the
random functions $R_i$ of Section~\ref{sec:cond-infin-future} to be
given by $R_i(X_{n+i},X_n)= (X_{n,-j},\ldots,X_{n,0})$, for any fixed
$j$, and conclude that the conditions of Theorem \ref{TH4} and
Corollary \ref{cor1} are satisfied too.  The result now follows from
the latter corollary.
\end{proof}
% \begin{theorem}\label{thm:cont1}
% Assume that $l_{0,0}$ has a finite second moment
% ${\mathbf E} l_{0,0}^2.$ Assume also that, at each time instant, each particle
% becomes active with probability $p$ independently of everything else.
% Then, for any $j\ge 0$, the finite-dimensional projections $(X_{n,-j},\ldots,
% X_{n,0})$ of vectors $X_n$ converge to a proper limiting vector in the
% total variation norm.
% \end{theorem}

\section{Relation to Harris ergodicity} %and the renovation method}
\label{sec:HR}

In this Section, we revisit the basic concept of Harris ergodicity
and show that it may be considered as a particular
case of the approach developed in Section~\ref{sec:cond-infin-future}.

It is known (see, e.g., \cite{Kif, BoFo, Bo98})
that a time-homogeneous Markov chain $\{X_n\}$ taking values in a
measurable state space
$({\cal X, B_X})$ may be represented as a stochastic
recursion~\eqref{SRS}
with i.i.d.\  driving sequence $\{\xi_n\}$ if one assumes ${\cal B_X}$
to be countably generated. Moreover, without loss of generality, one
may assume that the random variables 
$\xi_n$ are real-valued and uniformly distributed on the interval $(0,1)$.

Recall 
the following classical definition.

{\bf Definition.}  A time-homogeneous {\it aperiodic} Markov chain
$\{X_n\}$ taking values in a general state space $({\cal X,B_X})$ is
\emph{Harris ergodic} (or \emph{Harris positive recurrent}) if the
following conditions
hold:\\
there exist a set $V\in {\cal B_X}$, a number $0<p\le 1$, an integer
$m\ge 1$, and a probability
measure $\varphi$ on $({\cal X,B_X})$ such that
\begin{compactenum}
\item[(1a)] if $\tau\equiv\tau(V):=\min\{n\ge 1\ :\ X_n\in V\}$ is the
  first hitting time of the set $V$, then,
for any $x\in {\cal X}$,
\begin{displaymath}
  \Pr_x(\tau<\infty) = 1.
\end{displaymath}
\item[(1b)] $\sup_{x\in V} \E_x\tau < \infty$,
\end{compactenum}
and
\begin{compactenum}
\item[(2)] for any $x\in V$,
$$
\Pr_x(X_m\in \cdot ) \ge p\varphi (\cdot ),
$$
\end{compactenum}
where $\Pr_x$ and $\E_x$ denote respectively probability and
expectation conditional on $\{X_0=x\}$.
Note that frequently the set $V$ is called {\it positive recurrent} if
the conditions (1a)--(1b) hold.

The following result holds (see e.g. \cite{MyTw}).

\begin{proposition}\label{Th1}
  Assume that the Markov chain $\{X_n\}$ is Harris ergodic.  Then
  there exists a unique stationary (invariant) distribution
  $\pi$, which is also limiting in the sense of convergence in the
  total variation norm: for any $X_0=x\in {\cal X}$,
\begin{equation}\label{TV}
\sup_{B\in{\cal B_X}} |\Pr_x (X_n\in B)-\pi (B)|\to 0, \quad \mbox{as}
\quad n\to\infty.
\end{equation}
Conversely, if \eqref{TV} holds for any initial value $X_0=x\in {\cal X}$,
then the Markov chain is Harris ergodic.
\end{proposition}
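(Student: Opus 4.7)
The plan is to deduce Proposition~\ref{Th1} by recasting the Athreya--Ney--Nummelin splitting as an instance of the break-time construction of Section~\ref{sec:cond-infin-future}. For the forward implication, represent the chain through the stochastic recursion~\eqref{SRS} with i.i.d.\ $(0,1)$-uniform driving variables $\{\xi_n\}_{n\in\Z}$ (as discussed in the paragraph preceding the definition). Using the minorisation in condition~(2), arrange the recursion so that, whenever $X_n\in V$, the map $(x,\xi_{n+1},\dots,\xi_{n+m})\mapsto X_{n+m}$ splits as follows: there is an event $F_n\in\sigma(\xi_{n+1},\dots,\xi_{n+m})$ of probability~$p$ on which $X_{n+m}$ is a function of $\xi_{n+1},\dots,\xi_{n+m}$ alone and has distribution $\varphi$, while on $F_n^c$ it is drawn from the normalised residual kernel. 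Set $H_n=\{X_n\in V\}\in\si_n$ and $A_n=H_n\cap F_n$. Conditions (F1) and (F2) for $\{F_n\}$ are immediate, and the times $\tau_k$ of occurrence of the events $A_n$ are the natural regeneration times of the split chain.

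Next, verify the hypotheses of Corollary~\ref{cor:2}: condition~\eqref{eq:1} holds with $E'_{n,n+m'}=F_n$, which lies in $\si_{n+1,n+m'}$ for $m'\ge m$ (handling $m'<m$ by incorporating a minimum-separation requirement $\tau_{k+1}-\tau_k\ge m$ into the definition of $A_n$, exactly as in variant~(c) of Example~\ref{ex:1}); condition~\eqref{eq:43} holds because on $A_n$ the chain from time $n+m$ onward is a measurable function of $\xi_{n+m+1},\dots$ together with the $\varphi$-distributed $X_{n+m}$, so $\{X_{n+m'}\in V\}$ is expressible as an event in the future $\sigma$-algebra. Finiteness $\tau_k<\infty$ a.s.\ follows from condition~(1a) combined with independent geometric trials on the $F_n$, while the finite mean $\E(\tau_{k+1}-\tau_k)<\infty$ follows from condition~(1b) combined with the same geometric trial argument. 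Aperiodicity is handled by mixing one- and two-step regeneration windows (or, equivalently, by a standard Nummelin perturbation of the splitting). Applying Corollary~\ref{cor1} to $R_i(X_{n+i},X_n)=X_{n+i}$ then yields convergence in total variation, and uniqueness of $\pi$ follows because any invariant distribution must agree with the cycle-average distribution induced by the i.i.d.\ block structure guaranteed by Theorem~\ref{th:1}.

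For the converse, assuming \eqref{TV} for every starting point, extract the Harris ingredients by a classical argument: invariance of the limit $\pi$ is immediate, and one produces $V\in\mathcal{B_X}$ with $\pi(V)>0$ and some integer $m$ on which $\sup_{x\in V}\|P^m(x,\cdot)-\pi\|_{TV}<1/2$, using countability of a generating subfamily of $\mathcal{B_X}$ and an Egorov-type reduction. This uniform overlap yields a minorisation $P^m(x,\cdot)\ge\tfrac{1}{2}\varphi(\cdot)$ for $x\in V$ with $\varphi$ a suitably normalised restriction of $\pi$, giving condition~(2). Condition~(1a) follows from $P^n(x,V)\to\pi(V)>0$, and (1b) follows from an elementary coupling using the minorisation just obtained.

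The main obstacle will be the converse: locating a set $V$ on which the convergence in \eqref{TV} is uniform, and extracting the minorisation, is the delicate classical ingredient from \cite{MyTw} and is essentially orthogonal to the framework of Section~\ref{sec:cond-infin-future}. By contrast, the forward direction sits very naturally within this framework---once the Nummelin split is performed, the events $H_n$, $F_n$ satisfy the monotonicity conditions of Corollary~\ref{cor:2} almost by construction, and the remaining assertions are routine consequences of Corollary~\ref{cor1}.
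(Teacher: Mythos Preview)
The paper does not actually prove Proposition~\ref{Th1}; it is stated as a classical result with the citation ``(see e.g.\ \cite{MyTw})''. What the paper \emph{does} do, in the discussion following the proposition, is show that the forward implication (Harris ergodic $\Rightarrow$ total-variation convergence) can be recovered as an instance of Corollary~\ref{cor1}. So your proposal is in the right spirit for the forward direction, and is correct in outline, but it is worth comparing your route with the paper's.

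The paper's construction is simpler and cleaner than yours. Rather than trying to carve the minorisation event $F_n$ out of the original uniform driving variables $\xi_n$, the paper \emph{augments} the driving sequence to $\xi_n=(\zeta_n,\alpha_n,\psi_n)$, where $\{\alpha_n\}$ is an independent i.i.d.\ Bernoulli$(p)$ coin and $\{\psi_n\}$ is an independent i.i.d.\ sequence with law~$\varphi$. One then sets $F_n=\{\alpha_{n+1}=1\}$ and $H_n=\{X_n\in V\}$. Because $F_n$ depends on a single auxiliary coordinate that is independent of the chain's dynamics, the monotonicity conditions \eqref{eq:41}, \eqref{eq:1}, \eqref{eq:43} are immediate---there is nothing to check. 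Your approach, embedding the coin inside $\sigma(\xi_{n+1},\dots,\xi_{n+m})$ and then wrestling with the $m'<m$ case via an enforced minimum separation, works in principle but is unnecessarily delicate; the paper sidesteps all of this by enlarging the probability space. The paper gives the detailed construction only for $m=1$ and remarks that $m>1$ requires ``a slightly more elaborated coupling construction'', so your handling of general $m$ goes a bit further than what the paper spells out.

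For the converse, you correctly identify that this lies outside the Section~\ref{sec:cond-infin-future} framework and requires the classical Meyn--Tweedie machinery. The paper makes no attempt at this direction beyond the citation, so your sketch is already more than the paper provides.
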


The ``coupling-type'' interpretation of the dynamics of a Harris ergodic
Markov chain was proposed in \cite{AN, Num}, see also \cite{BoFo, Bo98}. 
This may be done as follows:
we run a Markov chain until it hits set the $V$ (say at time $n$), then we
flip a coin (independently of everything else) with probability~$p$ of
getting a head.
If this happens, then we say that $n+m$ is
the success time when the Markov chain ``forgets its past'', i.e.\ $X_{n+m}$ has
distribution $\varphi$ 
which is independent
of what has happened before time $n$ (but may depend on what has
happened at times $n+1,...,n+m-1$). If, on the contrary, we get a tail  (which
occurs with probability $1-p$), we wait
for the first time
after time $n+m$ when the Markov chain visits $V$ again and then flip
independently another
coin.
After a geometric number of trials, we come to a success with probability one.
It is well-known (see, e.g. \cite{Asm} or \cite{MyTw}) that the Harris ergodic
Markov chain may be made {\it regenerative} if $m=1$ and {\it wide-sense
regenerative} and possessing the {\it one-dependence} property if $m\ge 2$ 
(the definitions are given in 
Section \ref{sec:cond-infin-future} after Theorem 4). More precisely, let
$0=T_0 < T_1 < T_2 < \ldots$ be the times of successes. Then
the \emph{cycle lengths} $T_{i+1}-T_i$ are i.i.d.\ in $i\ge 0$, and the
\emph{cycles}
$\left(T_{i+1}-T_{i},\, X_{T_i},X_{T_i+1},\ldots,X_{T_{i+1}-1}\right)$
are i.i.d.\ in $i\ge 1$ if
$m=1$, and are 1-dependent 
and identically distributed
(for $i\ge 1$) if $m\ge 2$. The one-dependence follows since if
$m\ge 2$, then the set $\{n+1,\ldots,n+m-1\}$ is non-empty, and if,
say,
$T_i=n+m$ for some $i$, then the values $X_{n+1},\ldots,X_{n+m-1}$
belong to the $i$th cycle, but they also depend, in general, on the value
$X_{n+m}$ that belongs to the $(i+1)$st cycle.

For self-containedness, we recall in more detail the
coupling construction of Athreya and Ney \cite{AN}, in the particular
case $m=1$, see \cite{Tho} for the general case.
Let $P(x,B)$ be
the transition kernel of the Markov chain. 
Then, using the condition~(2) in the above definition of Harris
ergodicity, for $x\in V$,
$$
P(x,B) = p \varphi (B) + (1-p) \frac{P(x,B)-p\varphi (B)}{1-p}
\equiv p\varphi (B) + (1-p) Q(x,B)
$$
where $Q$ is also a transition probability kernel. 

Now we provide the coupling construction.
First, let $\{ \alpha_n\}$ be an i.i.d.\ sequence with common distribution
${\mathbf P} (\alpha_n=1)=1-{\mathbf P} (\alpha_n=0)=p$.
Second, let $\{\zeta_n\}$ be another i.i.d.\ sequence (having, say, a
distribution which is uniform on
$(0,1)$) that does not depend on
$\{\alpha_n\}$.  Further, let $g_1:{\cal X}\times (0,1) \to{\cal X}$
and $g_2: V\times (0,1)\to {\cal X}$
be two measurable functions such that $g_1(x,\zeta_n)$ has distribution
$P(x,\cdot )$ and $g_2(x,\zeta_n)$ has distribution $Q(x,\cdot )$.
Finally, let $\{\psi_n\}$ be a third independent i.i.d.\ sequence with
distribution $\varphi$.

Then the dynamics of $X_n$ is defined as follows:
\begin{compactenum}[]
\item if $X_n\in V$ and $\alpha_{n+1}=1$, then $X_{n+1}=\psi_{n+1}$;
\item if $X_n\in V$ and  $\alpha_{n+1}=0$, then $X_{n+1}=g_2(x,\zeta_{n+1})$;
\item if $X_n\in \overline{V}$, then $X_{n+1}=g_1(x,\zeta_{n+1})$.
\end{compactenum}
Clearly, $X_n$ may be represented as a stochastic recursion with an i.i.d.\
driving sequence $\xi_n = (\zeta_n,\alpha_n,\psi_n)$.

Therefore, for $m=1$, Proposition \ref{Th1} may be viewed as a particular case of Corollary \ref{cor1}, with
$H_n=\{X_n\in V\}$, $F_n = \{\alpha_{n+1}=1\}$, $A_n=H_n\cap F_n$,
$\tau_n=T_n$ and
$R_i(X_{\tau_n+i},X_{\tau_n})=X_{\tau_n+i}$.  This follows since 
condition \eqref{eq:41} and then the conditions~\eqref{eq:1} and
\eqref{eq:43} are immediately verified.

In the case $m>1$, we need a slightly more elaborated coupling construction
to conclude that again Proposition \ref{Th1} may be viewed as a particular case of Corollary \ref{cor1}.

\section{Comments}\label{sec:discussion}

There is an extensive list of other examples, and we mention here
a few only.

First, there are directions where the methodology may be
applied directly:
Markov chains with long memory (e.g.\ \cite{Comets,Gal,DeSaPi});
exited random walks (e.g.\ \cite{BW,BR,Men4});
modified random walks (e.g.\ \cite{Kesten}).

Second, there are models which involve conditioning on the infinite future
which---in contrast with examples considered in this paper---has
probability zero in the original probability space, e.g.\
\cite{BB}. 

A further interesting example of embedded regenerative
structure is of shifts of Brownian motions, see 
\cite{Last}.

In the case where the future event $F_n$ admits a representation
$$
F_n = \bigcap_{k\ge n} F_{n,k}
$$
where $F_{n,k} \in \sigma_{n,k}$ satisfy the monotonicity property
\eqref{eq:41}, 
one can introduce a general scheme for ``perfect simulation''
of the process along the lines of, say, \cite{FoKonst}.

We also comment that our results may be directly
extended onto more general models where either the elements
of the
driving sequence $\{\xi_n\}$  remain independent but are only ``asymptotically
identically distributed'' or where this sequence is
regenerative or, more generally, converges (in an appropriate manner) to a
regenerative sequence. Here the renovation method (see e.g. \cite{Bo98}),
or the method of renovating events, may be of use.

\section*{Acknowledgements}
\label{sec:acknowledgements}

The authors are most grateful to the referees for their insightful and
helpful comments.


\begin{thebibliography}{99}

\bibitem{Asm} S.~Asmussen,
Applied Probability and Queues, 2nd Edition, {\em Springer}, 2003.

\bibitem{AN}
K.B.~Athreya and P.~Ney,
A new approach to the limit theory of recurrent Markov chains,
{\it  Trans. Amer. Math. Soc.}, {\bf 245} (1978), 493--501.

\bibitem{BB}
I.~Benjamini and N.~Berestycki,
Random Paths with Bounded Local Time,
{\it J. Eur. Math. Soc.,} {\bf 12} (2010), 819--854.

\bibitem{BW}
I.~Benjamini and D.B.~Wilson,
Excited Random Walk,
{\it Electron. Commun. Probab.}, {\bf 8} (2003), 86--92.

\bibitem{BR}
J.~Berard and A.~Ramirez,
Central Limit Theorem for the Excited Random Walk in Dimension
$D\ge 2$,
{\it Elect. Comm. in Probab.,} {\bf 12} (2007), 303--314.

\bibitem{BoFo}
A.A.~Borovkov and S.G.~Foss,
Stochastically Recursive Sequences and and Their Generalizations,
{\it Siberian Advances in Mathematics}, {\bf 2} (1992), 16--81.

\bibitem{Bo98}
A.A.~Borovkov,
{\it Ergodicity and Stability of Stochastic Processes},
{\em Wiley}, Chichester, 1998.

\bibitem{ChRa}
K.~Chernysh and S.~Ramasmamy (working paper).


\bibitem{Comets}
F.~Comets, R.~Fernandez and P.~Ferrari,
Processes with Long Memory: Regenerative Construction and Perfect Simulation,
{\it Ann. Appl. Probab.}, {\bf 12} (2002), 921--943.

\bibitem{DFKonst}
D.~Denisov, S.~Foss and T.~Konstantopoulos,
Limit theorems for a random directed slab graph,
{\it Ann. Appl. Probab.},
{\bf 22} (2012), 702--733.

\bibitem{Dur}
R.~Durrett and R.~Schinazi,
Boundary Modified Contact Processes,
{\it J. Theor. Probab.}, {\bf 13} (2000), 575--594.

\bibitem{FoKonst}
S.~Foss and T.~Konstantopoulos,
Extended Renovation Theory and Limit Theorems for Stochastic Ordered Graphs,
{\it Markov Processes and Related Fields}, {\bf 9} (2003), 413--468.

\bibitem{FoKonst2}
S. Foss, T. Konstantopoulos,
An overview of some stochastic stability methods,
{\it Journal of Operation Research Society Japan}, {\bf 47} (2004), 275--303.

\bibitem{FMS}
S.~Foss, J.~Martin and Ph.~Schmidt,
Long-range last-passage percolation on the line,
{\it Ann. Appl. Probab.} (to appear).\\
$http://arxiv.org/abs/1104.2420$


\bibitem{Gal} S.~Gallo,
Chains with Unbounded Variable Length Memory: Perfect Simulation and a Visible Regeneration Scheme,
{\it Adv. in Appl. Probab.}, {\bf 43}, (2011), 735--759.

\bibitem{GalPre}
A.~Galves and E.~Presutti,
Edge fluctuations for the one-dimensional supercritical
contact process.
{\it Ann. Prob.}, {\bf 15}, (1987) 1131--1145.

\bibitem{Kesten}
H.~Kesten, O.~Raimond and B.~Schapira,
Random Walks with Occasionally Modified Transition Probabilities
(submitted).\\
$http://arxiv.org/abs/0911.3886$


\bibitem{Kif}
Yu.~Kifer,
{\it Ergodic Theory of Random Transformations}, {\em Birkhauser}, Boston,
1986.

\bibitem{Kuc}
T.~Kuczek,
The Central Limit Theorem for the Right Edge of Supercritical Oriented Percolation,
{\it Ann. Probab.}, {\bf 17} (1989), 1322--1332.

\bibitem{Last}
G.~Last, P.~Moerters and H.~Thorisson,
Unbiased Shifts of Brownian Motion
(submitted).\\
$http://arxiv.org/abs/1112.5373$

\bibitem{Men4}
M.~Menshikov, S.~Popov, A.~Ramirez and M.~Vachkovskaya,
On a General Many-Dimensional Excited Random Walk,
{\it Ann. Appl. Probab.} (to appear).\\
$http://arxiv.org/abs/1001.1741$

\bibitem{MyTw}
S.~Meyn and R.L.~Tweedie,
{\it Markov Chains and Stochastic Stability},
Springer, 1993.


\bibitem{Mount}
T.~Mountford and T.~Sweet,
An Extension of Kuczek's Argument to Nonnearest Neighbor Contact Processes,
{\it J. Theor. Probab.},
{\bf 13} (2000), 1061--1081.

\bibitem{Num}
E.~Nummelin,
A splitting technique for Harris recurrent Markov chains,
{\it Z. Wahrsch. Verw. Gebiete}, {\bf 43} (1978), 309--318.



\bibitem{DeSaPi} E.~De Santis and M.~Piccioni,
Backward Coalescence Times for Perfect Simulation of Chains with Infinite
Memory,
{\it J. Appl. Probab.}, {\bf 49} (2012), 319--337.

\bibitem{Sta}
A.M.~Stacey,
Partial immunization processes,
{\it Ann. Appl. Probab.}, {\bf 13} (2003), 669--690.

\bibitem{Tho} H.~Thorisson,
{\it Coupling, Stationarity, and Regeneration}, Springer, 2000.

\bibitem{Tzi}
A.~Tzioufas,
On the growth of the one-dimensional reverse immunization contact processes,
{\it J. Appl. Probab.} {\bf 48} (2011), 611--623.

\end{thebibliography}
\end{document}